\newtheorem{thm}{Theorem}[section]
\newtheorem{cor}[thm]{Corollary}
\newtheorem{lem}[thm]{Lemma}
\newtheorem{prop}[thm]{Proposition}
\theoremstyle{definition}
\theoremstyle{definition}
\theoremstyle{definition}
\theoremstyle{definition}
\newtheorem{rem}[thm]{Remark} \numberwithin{equation}{section}
\newcommand{\R}{\mathbb R}
\newcommand{\Z}{\mathbb Z}
\newcommand{\T}{\mathbb T}
\newcommand{\N}{\mathbb N}
\def\1{\mathbb I}
\def\e{\epsilon}
\begin{document}
\title[Lipschitz regularity results for nonlinear strictly elliptic equations]
{Lipschitz regularity results for nonlinear strictly elliptic equations and applications}

\author{Olivier Ley\and Vinh Duc Nguyen}
\address{IRMAR, INSA de Rennes, 35708 Rennes, France} \email{olivier.ley@insa-rennes.fr}
\address{School of Mathematics, University of Cardiff, UK} \email{nguyenv2@cardiff.ac.uk
}

\begin{abstract}
Most of lipschitz regularity results for nonlinear strictly elliptic equations are obtained for a 
suitable growth power of the nonlinearity
with respect to the gradient variable (subquadratic for instance). For equations with 
superquadratic growth power in gradient, one usually uses weak Bernstein-type
arguments which require regularity and/or convex-type assumptions on
the gradient nonlinearity.  In this article, we obtain new Lipschitz regularity results for  
a large class of nonlinear strictly elliptic equations with possibly arbitrary growth power 
of the Hamiltonian with respect to the gradient variable
using some ideas coming from Ishii-Lions' method. 
We use these bounds to solve an ergodic problem and to
study the regularity and the large time behavior of the solution
of the evolution equation. 
\end{abstract}

\subjclass[2010]{Primary 35J60, 35K55; Secondary 35D40, 35B65, 35B45,  35B51, 35B40, 35B05}
\keywords{Nonlinear strictly  elliptic equations, Nonlinear parabolic equations, 
Hamilton-Jacobi equations, gradient bounds, oscillation, Lipschitz regularity,
H\"older regularity, strong maximum principle, ergodic problem,
asymptotic behavior, viscosity solutions}

\date{\today}

\maketitle


\section{Introduction}

The main goal of this work is to obtain gradient bounds, which are uniform in $\e>0$ and $t$
respectively,
for the viscosity solutions of
a large class of nonlinear strictly elliptic equations
\begin{eqnarray}\label{approx-scal}
\e v^\e-{\rm trace}(A(x) D^2v^\e)
+  H(x, Dv^\e)=0, & x\in\T^N,
\end{eqnarray}
and 
\begin{eqnarray}\label{edp-evol}
\left\{
\begin{array}{ll}
\displaystyle \frac{\partial u}{\partial t}-{\rm trace}(A(x) D^2u)
+  H(x, Du)=0, & (x,t)\in\T^N\times (0,+\infty),\\[2mm]
u(x,0)=u_0(x), &  x\in\T^N.
\end{array}
\right.
\end{eqnarray}
We work in the periodic setting ($\T^N$ denotes the flat torus $\R^N/\Z^N$)
and assume for simplicity that $A(x)=\sigma(x)\sigma(x)^T$ with $\sigma\in W^{1,\infty}(\T^N;\mathcal{M}_N).$ 
Let us mention that  all the results of this paper hold true if $\sigma \in C^{0,1/2}(\T^N;\mathcal{M}_N)$.

We recall that a diffusion matrix $A$ is called strictly elliptic if
\begin{eqnarray}\label{sig-deg}
&& \begin{array}{l}
\text{there exists } \nu >0 \text{ such that }
A(x)\geq \nu I, \quad
x\in\T^N.
\end{array}
\end{eqnarray}

Most of Lipschitz regularity results for elliptic equations are obtained 
for a suitable growth power with respect to the gradient variable (subquadratic for instance, see
Frehse~\cite{frehse81}, Gilbarg-Trudinger~\cite{gt98}). 
In this article, we establish some gradient bounds
\begin{eqnarray}
&&|Dv^\e|_\infty\leq K,~~\text{where $K$ {\em is independent of} $\e,$}\label{grad-intro}\\
&&|Du(\cdot,t)|_\infty\leq K,~~\text{where $K$ {\em is independent of} $t$}\label{gradt-intro},
\end{eqnarray}
for strictly elliptic equations whose 
Hamiltonians $H$ have arbitrary growth power in the gradient variable,
which is unsual.
\smallskip

An important feature of our work is that we look for uniform gradient bounds
in $\e$ or $t.$ In many results, the bounds depend crucially on the $L^\infty$ norm
of the solution (which looks like $O(\e^{-1})$ or $O(t)$), something we want to avoid
in order to be able to solve some ergodic problems by sending $\e\to 0$
or to study the large time behavior of $u(x,t)$ when $t\to +\infty.$
These applications are discussed more in details below and are done
in Section~\ref{Applications}. We focus now on the more delicate part, i.e.,
the Lipschitz bounds for~\eqref{approx-scal}.
\smallskip

Let us start by recalling the existing results when $H$ is superquadratic
and coercive.
H\"older regularity of the
solution is proved under the very general assumption
\begin{eqnarray*}
H(x,p)\geq \frac{1}{C}|p|^k -C, \qquad
\text{with $k>2,$} 
\end{eqnarray*}
see Capuzzo Dolcetta et al.~\cite{clp10}, Barles~\cite{barles10},
Cardaliaguet-Silvestre~\cite{cs12}, Armstrong-Tran~\cite{at15}. 
But there are only few results as far as Lipschitz regularity
is concerned.
In general they are established using Bernstein method~\cite{gt98, lions82}
or the adaptation of this method in the context of viscosity solutions,
see Barles~\cite{barles91a}, Barles-Souganidis~\cite{bs01}, Lions-Souganidis~\cite{ls05},
Capuzzo Dolcetta et al.~\cite{clp10}.
This approach requires
some structural assumptions on $H$ which are often close to ``convexity-type
assumptions''. They appear naturally when differentiating the equation,
a drawback of the original Bernstein method. Even if
the weak Bernstein method~\cite{barles91a} is less restrictive as far as
the regularity of the datas is concerned (Lipschitz continuity
is enough), we do not consider this approach here to be able to deal
with Hamiltonians having few regularity like H\"older continuous Hamiltonians
for instance. Actually most of our assumptions do not even require the Hamiltonian
to be continuous as soon as a continuous solution to the equation exists.
However, let us mention that the weak Bernstein method has also several
advantages: the method may be used for degenerate equations in some cases
and the Hamiltonian may have arbitrary growth, see for instance~\cite{bs01, clp10}.
\smallskip

Instead, in this work, we use the Ishii-Lions' method introduced in~\cite{il90}, 
see also~\cite{cil92, barles08}.
This method allows to takes profit of the strict ellipticity of the equation to control
the strong nonlinearities of the Hamiltonian. 
In Ishii-Lions~\cite{il90} and Barles~\cite{barles91b},
weak regularity assumptions are assumed
over $H,$ merely a kind of balance between some H\"older continuity in $x$ and the
growth size of $H$ with respect to the gradient, namely
\begin{eqnarray}\label{il-assumpt}
&& |H(x,p)-H(y,p)|\leq \omega(|x-y|)|x-y|^\tau |p|^{2+\tau} + C
\quad \text{in \cite[Assumption (3.2)]{il90}},
\end{eqnarray}
or
\begin{eqnarray}\label{barles-assumpt}
&& |H(x,p)-H(y,p)|\leq C|x-y| |p|^{3} + C(1+|p|^2)
\quad \text{in \cite[Assumption (3.4)]{barles91b}},
\end{eqnarray}
where $x,y\in\T^N,$ $p\in\R^N,$ $\tau\in [0,1],$ $\omega$ is a modulus
of continuity and $C>0.$
These assumptions are designed for subquadratic (or growing at most like $|p|^3$)
Hamiltonians. This is not surprising since
it is known that, in general, the ellipticity is not powerful enough
to control nonlinearities which are more than quadratic~\cite{clp10}.
Under these assumptions, the authors prove a Lipschitz bound, which depends
however of the $L^\infty$ norm of the solution.
\smallskip

Our results consists in improving the previous ones in the periodic
setting. We give two new results, the first one being a slight generalization of 
of~\cite{il90, barles91b} while the second one takes profit of the strong
coercivity of $H$ and allows arbitrary growth of $H$ with respect to the gradient.
\begin{thm}\label{uni_grad1}
Assume \eqref{sig-deg} and  $H$ satisfies 
\begin{eqnarray}\label{ssa4}
&&  \left\{\begin{array}{l}
\text{there exists $L>1$ such that
for all $x,y\in\T^N,$}\\
\text{if $|p|\!= \!L$, then }
\displaystyle  H(x,p) \ge|p|\left[H(y,\frac{p}{|p|})\!+\!|H(\cdot ,0)|_\infty
\!+\! N|x-y||\sigma_x|_\infty^2\right].
\end{array}\right.
\end{eqnarray}
and
\begin{eqnarray}\label{LN-ell1}
&&  \left\{\begin{array}{l}
\text{There are constants $\alpha>0,~~C$ such that}\\[2mm]
\text{for all $x,y\in \T^N, p \in \R^N$, }\displaystyle 
|H(x,p)-H(y,p)| \le C|x-y|^{ \alpha}|p|^{\alpha+2}+C(1+|p|^{2}).
\end{array}\right.
\end{eqnarray}
Then, there exists $K>0$ such that for all $\e >0,$ any continuous solution 
$v^\e$ of \eqref{approx-scal} satisfies~\eqref{grad-intro}.
\end{thm}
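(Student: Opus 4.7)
The plan is to follow the Ishii-Lions doubling-of-variables method in the periodic setting. I will introduce
\[
\Phi(x,y) = v^\e(x) - v^\e(y) - L\phi(|x-y|), \qquad (x,y)\in\T^N\times\T^N,
\]
where $\phi:[0,+\infty)\to[0,+\infty)$ is smooth, strictly concave on $(0,r_0)$, with $\phi(0)=0$, $\phi'(0^+)=1$, and $-\phi''(r)\ge c r^{\alpha-1}$ near $0$ (a prototypical choice being $\phi(r) = r - c' r^{1+\alpha}$ on a neighborhood of $0$, smoothly extended). Showing $\max\Phi \le 0$ gives $v^\e(x)-v^\e(y)\le L|x-y|$ and hence $|Dv^\e|_\infty\le L=:K$ independent of $\e$, which is precisely \eqref{grad-intro}. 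I argue by contradiction: if $\max\Phi>0$, the maximum is attained at some $(\bar x,\bar y)$ with $\bar x\ne\bar y$.

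At $(\bar x,\bar y)$ I invoke the Crandall-Ishii-Jensen lemma to produce symmetric matrices $X,Y$ and
\[
\bar p = L\phi'(|\bar x-\bar y|)\,\frac{\bar x-\bar y}{|\bar x-\bar y|},
\]
with $(\bar p,X)\in\overline{J}^{2,+}v^\e(\bar x)$, $(\bar p,Y)\in\overline{J}^{2,-}v^\e(\bar y)$, together with the standard matrix inequality bounding $X-Y$ by (four times) the Hessian of $L\phi(|x-y|)$ at $(\bar x,\bar y)$. Writing the sub-/super-solution inequalities for \eqref{approx-scal} and subtracting (discarding the non-negative $\e(v^\e(\bar x)-v^\e(\bar y))$ term) yields
\[
{\rm tr}(A(\bar x)X) - {\rm tr}(A(\bar y)Y) \ \ge\ H(\bar x,\bar p) - H(\bar y,\bar p).
\]
Splitting the trace as ${\rm tr}(A(\bar y)(X-Y)) + {\rm tr}((A(\bar x)-A(\bar y))X)$, the matrix inequality together with \eqref{sig-deg} bounds the first piece above by $4L\nu\phi''(|\bar x-\bar y|)$, a large \emph{negative} quantity since $\phi''<0$. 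The Lipschitz bound on $\sigma$ applied to the second piece contributes a correction of order $N|\bar x-\bar y|\,|\sigma_x|_\infty^2\,|\bar p|$, which is precisely the quantity appearing in \eqref{ssa4}.

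On the Hamiltonian side, \eqref{LN-ell1} gives $H(\bar y,\bar p)-H(\bar x,\bar p)\le C|\bar x-\bar y|^\alpha|\bar p|^{\alpha+2} + C(1+|\bar p|^2)$. Choosing $c$ large in the definition of $\phi$ makes the ellipticity gain $-4L\nu\phi''(|\bar x-\bar y|)$ dominate the H\"older-weighted superquadratic term (using $|\bar p|\le L$). The residual $C(1+|\bar p|^2)$ is handled by arranging $|\bar p|=L$ at the critical point --- by taking $\phi$ with $\phi'(r)\equiv 1$ on a small interval and pushing concavity to its boundary, or by a limiting argument tracking $r_0\to 0$ --- so that \eqref{ssa4} applies and delivers the coercive lower bound
\[
H(\bar x,\bar p) \ge L\bigl[H(\bar y,\bar p/L)+|H(\cdot,0)|_\infty + N|\bar x-\bar y|\,|\sigma_x|_\infty^2\bigr].
\]
Since $|\bar p/L|=1$, the term $H(\bar y,\bar p/L)$ is a bounded constant, so this lower bound exactly absorbs both the $C(1+|\bar p|^2)$ residue \emph{and} the cross trace term $N|\bar x-\bar y|\,|\sigma_x|_\infty^2|\bar p|$, producing the contradiction. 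The principal obstacle is the joint calibration of $\phi$: it must simultaneously force $|\bar p|=L$ so that the specialized form of \eqref{ssa4} is sharp, and provide a sufficiently large $-\phi''$ for strict ellipticity to beat the exponent $\alpha+2$ on the right-hand side of \eqref{LN-ell1}. This interaction between the coercivity scaling in \eqref{ssa4} and the H\"older-growth calibration in \eqref{LN-ell1} is exactly what makes the combined hypotheses strong enough to close the Bernstein-free gradient estimate.
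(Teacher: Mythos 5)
Your proposal reaches for the right tools (Ishii--Lions doubling, strict ellipticity), but there are two structural gaps that prevent it from closing, and both trace back to how \eqref{ssa4} is used and what a single penalization function can deliver.

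First, \eqref{ssa4} applies only when $|p|=L$ exactly. At a doubled-variable maximum with test function $L\phi(|x-y|)$, $\phi$ strictly concave with $\phi'(0^+)=1$, the gradient is $\bar p = L\phi'(|\bar x-\bar y|)q$, and strict concavity forces $|\bar p|<L$ strictly since $\bar x\ne\bar y$. Your workarounds --- $\phi'\equiv 1$ near the maximum, or a limiting argument in $r_0$ --- cannot keep $|\bar p|=L$ and $\phi''(|\bar x-\bar y|)<0$ at the same point: once $\phi'\equiv 1$ on an interval, $\phi''=0$ there and the ellipticity gain vanishes, so the claimed absorption of the residual by \eqref{ssa4} cannot be performed. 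In the paper, \eqref{ssa4} is used only in Lemma~\ref{oscillation} (the oscillation bound), where the penalization is the cone $L|x-y|$ and so $|p|=L$ is automatic; it is never invoked in the Ishii--Lions step itself. The resulting oscillation bound $\mathcal{O}$, independent of $\e$, is then what forces $|\bar x-\bar y|$ to be small (by fixing $r$ via $\Psi(r)=\mathcal{O}+1$), a localization your argument never supplies yet that is essential for the second-order term to dominate.

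Second, targeting the Lipschitz constant $L$ from \eqref{ssa4} and tying the exponent in $\phi$ to the $\alpha$ from \eqref{LN-ell1} is too rigid. The constant $L$ is structural and cannot be tuned, whereas closing the estimate requires a Lipschitz constant one is free to take large; and $\phi(r)=r-c'r^{1+\alpha}$ has $|\phi''(r)|\sim r^{\alpha-1}$, which does \emph{not} blow up when $\alpha\ge 1$, so for small $|\bar x-\bar y|$ the ellipticity gain cannot even beat the constant residual $C(1+|\bar p|^2)$. The paper instead introduces a free exponent $\gamma\in(0,1)$ and proceeds in two steps: a $\gamma$-H\"older bound with $\Psi(s)=K_0 s^\gamma$ and $\gamma$ small, then an upgrade to Lipschitz with $\Psi(s)=A_1[A_2 s-(A_2 s)^{1+\gamma}]$, using the Step~1 H\"older bound to secure $s\Psi'(s)^{1/(1-\gamma)}\le K_0^{1/(1-\gamma)}$. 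This rewrites $s^\alpha\Psi'(s)^{\alpha+2}$ as $\bigl(s\Psi'(s)^{1/(1-\gamma)}\bigr)^\alpha\Psi'(s)^{\alpha+2-\alpha/(1-\gamma)}$ with gradient exponent strictly below $2$, which is what makes the superquadratic term absorbable for the large values of $\Psi'$ that arise in the Lipschitz step. Without that a priori H\"older control, the exponent $\alpha+2$ defeats the ellipticity for the Lipschitz constants you would be forced to take.
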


\begin{thm}\label{uni_grad2}
Assume \eqref{sig-deg} and  $H$ satisfies 
\begin{eqnarray}\label{clp}
&& \text{there exist  constants $k>2,C>0$ such that } H(x,p)\ge \frac{1}{C}|p|^{ k}-C
\end{eqnarray}
and
\begin{eqnarray}\label{LN-ell2}
&&  \left\{\begin{array}{l}
\text{there exist a modulus of continuity $\omega$ and constants $\alpha \in [0,1], \beta<k-1$}\\[2mm]
\text{such that for all $x,y\in \T^N, p \in \R^N$,}\\[2mm]
\displaystyle 
|H(x,p)-H(y,p)| \le \omega \left((1+|p|^\beta)|x-y|\right)\, |x-y|^{ \alpha}|p|^{ (k-1)\alpha+k}
+o(|p|^k),
\end{array}\right.
\end{eqnarray}
where $o(|p|^k)/|p|^k\to 0$ as $|p|\to +\infty,$ uniformly with
respect to $x\in\T^N.$
Then, there exists $K>0$ such that for all $\e >0,$ any continuous solution 
$v^\e$ of~\eqref{approx-scal} satisfies~\eqref{grad-intro}.
\end{thm}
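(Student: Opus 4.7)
The proof adapts the Ishii--Lions doubling-of-variables method to the coercive setting. Introduce on $\T^N\times\T^N$ the auxiliary
\[
\Phi(x,y)=v^\e(x)-v^\e(y)-L_1\phi(|x-y|),
\]
where $L_1$ will be chosen large (playing the role of the Lipschitz constant) and $\phi\in C^2$ is concave, increasing, with $\phi(0)=0$, $\phi'(0)=1$; a standard choice is $\phi(s)=s-M_0s^{1+\theta}$ for $\theta\in(0,1)$ and $M_0$ large, extended smoothly beyond its maximum. The goal is to show $\sup\Phi\le 0$ for $L_1$ large enough (but independent of $\e$), which yields $|Dv^\e|_\infty\le L_1$. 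Argue by contradiction: if $\max\Phi>0$ is attained at $(\bar x,\bar y)$, then $\bar x\neq\bar y$, and with $r=|\bar x-\bar y|$, $\hat e=(\bar x-\bar y)/r$, $p=L_1\phi'(r)\hat e$, the Crandall--Ishii lemma supplies symmetric matrices $X,Y$ with $(p,X)\in\overline J^{2,+}v^\e(\bar x)$, $(p,Y)\in\overline J^{2,-}v^\e(\bar y)$ together with the standard quadratic matrix inequality. The classical estimate $\e|v^\e|_\infty\le|H(\cdot,0)|_\infty$ makes all $\e$-terms harmless.

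Two complementary estimates drive the contradiction. First, the coercivity~\eqref{clp} plugged into the sub-inequality, coupled with the Crandall--Ishii upper bound $\mathrm{tr}(A(\bar x)X)\le CL_1\phi'(r)/r+O(\iota)$, forces the extremely small scale
\[
r\le CL_1^{1-k}.
\]
Since $\beta<k-1$, the argument of $\omega$ in~\eqref{LN-ell2} satisfies $(1+|p|^\beta)r=O(L_1^{\beta-(k-1)})\to 0$, so $\omega(\cdot)\to 0$; combined with $r^\alpha|p|^{(k-1)\alpha+k}\le CL_1^k$, the Hamiltonian oscillation is driven to
\[
|H(\bar x,p)-H(\bar y,p)|\le o(L_1^k).
\]
Second, strict ellipticity~\eqref{sig-deg} combined with the Ishii--Lions trick---testing Crandall--Ishii against $(\xi,-\xi)$, summing in a basis diagonalising $A(\bar x)$, and accounting for the $A(\bar x)-A(\bar y)$ mismatch via the Lipschitz regularity of $\sigma$---produces a trace estimate of the form
\[
\mathrm{tr}(A(\bar x)X-A(\bar y)Y)\le 4\nu L_1\phi''(r)+CL_1\phi'(r)/r+O(L_1),
\]
whose dominant term $4\nu L_1\phi''(r)$ is strongly negative thanks to the concavity of $\phi$ and the ellipticity lower bound $\langle A(\bar x)\hat e,\hat e\rangle\ge\nu$.

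Combining these estimates in the subtracted viscosity inequality yields a comparison of $o(L_1^k)$ on the left with the Ishii--Lions bound evaluated at the coercivity-enforced scale $r\sim L_1^{1-k}$ on the right. Tuning $(M_0,\theta)$ in terms of $\alpha,\nu,k$ so that the negative $\nu\phi''$ contribution dominates the positive remainders and the $o(L_1^k)$ Hamiltonian slack produces the contradiction for $L_1$ large, establishing $|Dv^\e|_\infty\le L_1=K$. The main obstacle is precisely this parameter matching: since $\omega$ in~\eqref{LN-ell2} is only a modulus of continuity and may decay arbitrarily slowly, the argument cannot gain from any H\"older-type rate and must be carried out by sharp power-counting in $L_1$, $r$ and $|p|$; in particular, the admissible shape of $\phi$ is constrained by the interplay between the coercivity exponent $k$, the H\"older parameter $\alpha$, and the growth $\beta<k-1$, and the $o(|p|^k)$ slack in~\eqref{LN-ell2} is essential to leave room for this calibration.
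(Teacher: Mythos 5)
Your single-step argument does not close, and the gap is precisely at the point you flag at the end. Let us make the power-counting explicit. With $\phi(s)=s-M_0 s^{1+\theta}$, $\theta\in(0,1)$, and $r\lesssim L_1^{1-k}$ (the coercivity-enforced scale you correctly derive), the ellipticity gain is
\[
-4\nu L_1\phi''(r)\ \sim\ L_1\, r^{\theta-1}\ \lesssim\ L_1\cdot L_1^{(1-k)(\theta-1)}\ =\ L_1^{\,k-\theta(k-1)},
\]
which is a \emph{strictly smaller} power of $L_1$ than $k$ for any admissible $\theta>0$. On the other side, you have correctly shown that the Hamiltonian oscillation is bounded by $\omega\big(CL_1^{\beta-(k-1)}\big)\,L_1^k+o(L_1^k)$. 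But $\omega$ is only a modulus of continuity (and the little-$o$ has no specified rate), so there is no way to guarantee $\omega(\cdot)\,L_1^k+o(L_1^k)\ll L_1^{k-\theta(k-1)}$; for a slowly decaying $\omega$ (say, logarithmic) the inequality simply fails. Tuning $M_0,\theta$ cannot repair this because the required condition $k-\theta(k-1)\ge k$ forces $\theta\le 0$, contradicting strict concavity. So the ``calibration'' you invoke is not available at the coercivity scale.

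The paper fixes exactly this by inserting a bootstrap step that you are missing. Starting from the $\frac{k-2}{k-1}$-H\"older bound of Capuzzo Dolcetta--Leoni--Porretta (which is itself uniform in $\e$ by \eqref{clp} and the bound $\e|v^\e|_\infty\le|H(\cdot,0)|_\infty$), one first runs Ishii--Lions with $\Psi(s)=Ks^\chi$ to upgrade to $\chi$-H\"older continuity for \emph{every} $\chi\in(\tfrac{k-2}{k-1},1)$, with a constant $K_\chi$ independent of $\e$. In the Lipschitz step, one then uses the inequality $v(\bar x)-v(\bar y)\le K_\chi s^\chi$ together with concavity of the test function to get $s\Psi'(s)^{1/(1-\chi)}\le K_\chi^{1/(1-\chi)}$, hence $r\lesssim (A_1A_2)^{-1/(1-\chi)}$ with $L_1\sim A_1A_2$. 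Since $\chi$ can be taken arbitrarily close to $1$, the exponent $1/(1-\chi)$ can be made as large as one wishes, strictly improving on the coercivity scale $r\lesssim L_1^{1-k}$ (which only corresponds to $\chi=\tfrac{k-2}{k-1}$, i.e.\ $1/(1-\chi)=k-1$). This extra room is what lets the power in $A_1A_2$ on the right-hand side of the Ishii--Lions inequality (namely $(k-1)\alpha+k-\alpha/(1-\chi)$ for the $\omega$-term, and $k-(1-\gamma)/(1-\chi)$ for the $o(|p|^k)$-term) be pushed strictly below $2$ and below $1+\gamma$ respectively, after which the $O(A_2^2)$ ellipticity term wins and the modulus of continuity only needs to be $o(1)$. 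In short: the H\"older bootstrap is not a cosmetic intermediate step --- it is the mechanism that generates the small scale your estimate cannot reach, and without it the argument does not produce a contradiction.
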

Before giving some comments about these results,
let us explain in a formal way the strategy to
establish them. The proof follows roughly the same lines
as the one in~\cite{barles91b}. We aim at proving that
the maximum
\begin{eqnarray*}
\mathop{\rm max}_{x,y\in\T^N} \{v^\e(x)-v^\e(y)-\psi(|x-y|) \}
\end{eqnarray*}
is nonnegative, choosing in a first step $\psi(r)=Lr^\alpha,$
$\alpha\in (0,1),$ to obtain a H\"older bound, and,
in a second step, $\psi(r)=L(r-r^{1+\alpha}),$ to improve
the H\"older bound into a Lipschitz one.
To do this, we use in a crucial way the strict concave
behavior of $\psi$ near 0 to take profit of the strict ellipticity
of the equation as usual in Ishii-Lions' method.
\smallskip

The first notable difference with the previous works is that
we are able to force the maximum to be achieved at
$(\overline{x},\overline{y})$ with $r:=|\overline{x}-\overline{y}|$
enough close to 0 without increasing $L$ in terms of
the $L^\infty$ norm of $v^\e.$ This is a consequence
of an a priori oscillation bound
\begin{eqnarray}\label{osc-intro}
{\rm osc}(v^\e):= \mathop{\rm sup}_{\T^N} v^\e
- \mathop{\rm inf}_{\T^N} v^\e \leq K, \quad
\text{where $K$ {\em is independent of} $\e,$}
\end{eqnarray}
obtained by the authors~\cite{ln16}
for any continuous
solution of~\eqref{approx-scal}
when merely~\eqref{ssa4} holds.
Let us underline that this oscillation bound is a crucial
tool in our work and that the assumption~\eqref{ssa4} is very general; 
it is satisfied as soon as
\begin{eqnarray}\label{Hsuperlin}
\mathop{\rm lim\,sup}_{|p|\to +\infty}\frac{H (x,p)}{|p|} =  +\infty
\text{ uniformly with respect to $x$.}
\end{eqnarray}
We extend the oscillation bound
in the parabolic setting, see Lemma~\ref{para oscillation},
and give an application.
\smallskip

The second step starts by noticing that, once we have on hands
a H\"older bound, then the strength of the nonlinearity
is weakened. We can apply again Ishii-Lions' method in a context
where the ellipticity is reinforced compared to the nonlinearity,
even when the Hamiltonian has a large growth with respect to the gradient.
It allows to improve the regularity up to Lipschitz continuity.
This is one of the main novelty to obtain the gradient bounds.
Then, a careful study of the balance between both terms finally gives
the best exponents.
\smallskip

Let us comment our results. Theorem~\ref{uni_grad1}
reduces to~\cite[III.1]{barles91b} when $\alpha=1.$
But notice that our Lipschitz bound does not depend on
the $L^\infty$ bound of the solution and
we are able to deal with Hamiltonians having less
regularity with respect to $x.$ For instance,
our result applies when
\begin{eqnarray}\label{exple-type-convex}
H(x,p)= |\Sigma(x)p|^m + G(x,p),
\quad\text{$m\leq \alpha+ 2,$} \ \Sigma\in C^{0,\alpha}(\T^N;\mathcal{M}_N),
\end{eqnarray}
and $G$ satisfies~\eqref{Hsuperlin} (superlinearity)
and $|G(x,p)|\leq C(1+|p|^2)$ (subquadratic) without any regularity condition
on $G.$
\smallskip

In Theorem~\ref{uni_grad2}, the coercivity assumption~\eqref{clp}
is the one needed to obtain the H\"older regularity
with exponent $\frac{k-2}{k-1}$ in~\cite{clp10}. Notice that,
this estimate being independent of~$\e$, we get for free the
oscillation bound~\eqref{osc-intro}. The first step in this case
consists in showing that the solution is $\gamma$-H\"older
continuous {\em for any} $\gamma \in (\frac{k-2}{k-1},1).$ 
It then allows us 
to improve the regularity up to Lipschitz continuity.
In~\eqref{LN-ell2}, the growth power with respect to the gradient variable  
can be much greater 
than $k>2,$ which enlarges the class of Hamiltonians under which
our result applies. Let us emphasize that the situation is very different comparing 
to Theorem~\ref{uni_grad1} where we can 
start with any H\"older exponent to get the Lipschitz regularity. Here, starting with 
a H\"older exponent equal to $\frac{k-2}{k-1}$ seems crucial to be able to improve the regularity
when $H$ has a strong growth with respect to the gradient.
\smallskip

As examples of applications of Theorem~\ref{uni_grad2}, 
we can deal with some new classes of Hamiltonians 
for which the existing regularity theory does not apply.
We can first consider again~\eqref{exple-type-convex}, where now
there exists $k>2$ such that
\begin{eqnarray*}
\text{$k \leq m\leq (k-1)\alpha +k,$ \ $\Sigma >0$
and }
 \frac{G(x,p)}{|p|^k}\mathop{\to}_{|p|\to +\infty} 0.
\end{eqnarray*}
Notice that even if $\Sigma$ is now
assumed to be nondegenerate,
this Hamiltonian is not necessarily convex.
\smallskip

The Hamiltonian
\begin{eqnarray*}
H(x,p)= a(x)h(p) + G(x,p),
\quad\text{$k>2$, \ $\frac{|p|^k}{C}\leq h(p)\leq C|p|^{ k},$ \
$\frac{G(x,p)}{|p|^k}\mathop{\to}_{|p|\to +\infty} 0,$}
\end{eqnarray*}
where $a$ is merely continuous and positive,
satisfies all the assumptions of Theorem~\ref{uni_grad2}
and is not convex in general. 
\smallskip

Let us give another example which will be used in 
Section~\ref{sec:parab} to extend the results to the
parabolic case~\eqref{edp-evol} and in Section~\ref{vanishing idea} 
to prove an existence result in a quite surprising situation.
Let $K$ be any continuous function satisfying 
$K(x,p)\le C(|p|^M+1)$, for any  $x\in\T^N, p \in \R^N, M>2.$
Then, the function
\begin{eqnarray*}
H(x,p)= K(x,p)+\alpha |p|^{M+\delta}, \quad \alpha>0,\delta>0
\end{eqnarray*}
satisfies all the assumptions of Theorem~\ref{uni_grad2}.
These examples also illustrate
the few regularity assumptions on the datas which are needed.
\smallskip

Our work takes place in the periodic setting to take profit
of the compactness and the absence of boundary of $\T^N.$
The issue of extending our results
in a bounded set is very interesting and not obvious.
In the case of Neumann boundary conditions, it should be true
but the case of Dirichlet boundary conditions
faces the problem of loss of boundary conditions
when $H$ is superquadratic~\cite{bdl04}. 
Notice that we cannot expect such general results to be true
in a general bounded set since it is known~\cite{clp10} that
$\frac{k-2}{k-1}$-H\"older continuity is optimal in general.
Our results can be 
extended for $A=\sigma\sigma^T$ with $\sigma\in C^{0,1/2}(\T^N;\mathcal{M}_N),$
for quasilinear equations when
$A=A(x,p)$ and for fully nonlinear equations
of Bellman-Isaacs type,
see Section~\ref{ext-quasilin} for a discussion.
\smallskip

To study the well-posedness of~\eqref{approx-scal}
under the assumptions of Theorems~\ref{uni_grad1} and~\ref{uni_grad2},
we have first to prove a comparison principle (Theorem~\ref{CP})
whose proof is not classical
since the Hamiltonian is not
Lipschitz continuous with respect to the gradient.
Instead, we use the same ideas as for the proof of the Lipschitz bounds. 
As a consequence, we obtain the existence
and uniqueness of a continuous viscosity solution
to~\eqref{approx-scal}. Moreover this solution is Lipschitz
continuous and, if the datas are $C^\infty$, then the solution
is $C^\infty$ thanks to the classical elliptic regularity theory.
Let us mention that our approach also allows to
construct H\"older continuous solutions to~\eqref{approx-scal}
(Theorem~\ref{existence})
under the general assumption~\eqref{poly H} which is not sufficient
to provide a comparison principle.
\smallskip

We then give several applications of our results.
A straightforward consequence to the bound~\eqref{grad-intro}
is the solvability of the ergodic problem
associated with~\eqref{approx-scal}, see \cite{lpv86, al98} and Theorem~\ref{thm-erg}:
there exists $(c,v^0)\in\R\times W^{1,\infty}(\T^N)$ solution to
\begin{eqnarray}\label{erg-intro}
-{\rm trace}(A(x) D^2v^0) +  H(x, Dv^0)=c, & x\in\T^N.
\end{eqnarray}

The next application is
the study of the parabolic equation~\eqref{edp-evol}.
The natural idea to extend the gradient bound for~\eqref{approx-scal} 
to~\eqref{edp-evol}
is to prove first a bound for the time derivative
$|\frac{\partial u}{\partial t}|_\infty$
and then to apply the results obtained for the stationary equation.
This approach does not work directly for several
reasons. On the one side, the bound for the time derivative is
usually obtained as a consequence of the comparison principle
which is not available here. On the other
side, our a priori stationary gradient bounds are valid for continuous
solutions and not for subsolutions. We overcome these difficulties
by considering a tricky approximate equation where $H$ is replaced by 
\begin{eqnarray*}
H_{nq}(x,p)=\frac{1}{q}|p|^M + H_n(x,p),
\end{eqnarray*}
with a bounded uniformly continuous approximation $H_n$ of $H.$
A crucial point is that,
since the coercive term $\frac{1}{q}|p|^M$ does not depend on $x,$
the comparison principle holds for this new equation allowing us to build a continuous
viscosity solution.
Moreover, the approximate Hamiltonian satisfies
the key assumptions~\eqref{ssa4}--\eqref{LN-ell1}
or~\eqref{clp}--\eqref{LN-ell2} 
with the same constants as the original $H$. So we can build a solution of the ergodic problem
in this case. This solution allows us to control
the $L^\infty$, oscillation and time derivative bounds of the solution of the
parabolic problem. We therefore can prove a parabolic version of
the H\"older regularity result of~\cite{clp10} using the strong coercivity
of $H_{nq}$ (Lemma~\ref{clp-time}). By this way, we are in position to mimic the proofs
of gradient bounds in the stationary case and to conclude to the existence
of a unique Lipschitz continuous
solution to~\eqref{edp-evol}, see Theorem~\ref{thm-evol-unif}.
\smallskip

We finally apply all the previous results to prove the large time behavior
of the solution of~\eqref{edp-evol}. Having on hands the gradient bound~\eqref{gradt-intro}, 
a solution of the ergodic problem~\eqref{erg-intro} and the strong maximum principle, 
the proof is classical~\cite{bs01}.
\smallskip

The paper is organized as follows. In 
Section~\ref{os gra}, we prove the stationary gradient bounds, 
Theorems~\ref{uni_grad1} and~\ref{uni_grad2}.
Section~\ref{NCP} is devoted to establish the well-posedness
of~\eqref{approx-scal}. Finally, the applications are presented
in Section~\ref{Applications}. We start by solving the ergodic problem,
then a study of the parabolic equation~\eqref{edp-evol} is provided.
We end with the long-time behavior of the solution of~\eqref{edp-evol}
and the construction of H\"older continuous solutions to equations
with Hamiltonians of arbitrary growth without the use of comparison
principle.
\smallskip

\noindent
{\bf Acknowledgement.} 
This work was partially supported by the ANR (Agence Nationale de
la Recherche) through HJnet project ANR-12-BS01-0008-01
and WKBHJ project ANR-12-BS01-0020.

\section{Gradient bound for the stationary equation~~\eqref{approx-scal}}\label{os gra}

\subsection{Oscillation bound}

\begin{lem}\label{oscillation}
Assume~\eqref{ssa4}. Let  $v^\e$ be a continuous
solution of~\eqref{approx-scal} and let $v^\e(x_\e)=\min v^\e$. Then
\begin{eqnarray*}
v^\e (x)- v^\e(x_\e)\le L|x-x_\e| \quad \text{for all $x\in \T^N$,}
\end{eqnarray*}
where $L$ is the constant (independent of $\e$) which appears in~\eqref{ssa4}.
\end{lem}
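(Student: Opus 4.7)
\medskip

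\noindent
\textbf{Proof plan.} I would argue by contradiction. Assume the conclusion fails; since $\T^N$ is compact and the map $x\mapsto v^\e(x)-v^\e(x_\e)-L|x-x_\e|$ is continuous and vanishes at $x_\e$, one has
$$M := \max_{x\in\T^N}\bigl\{v^\e(x) - v^\e(x_\e) - L|x-x_\e|\bigr\} > 0,$$
attained at some point $\bar x \ne x_\e$. Set $r := |\bar x - x_\e|$, $\hat p := (\bar x - x_\e)/r$, and introduce the test function $\varphi(x) := v^\e(x_\e) + L|x-x_\e| + M$. This $\varphi$ is smooth in a neighborhood of $\bar x$, touches $v^\e$ from above at $\bar x$, and its gradient $D\varphi(\bar x) = L\hat p$ has norm exactly $L$, which is the privileged value in~\eqref{ssa4}.

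The next step is to collect the three pieces of information provided by the equation. First, the viscosity subsolution inequality for $v^\e$ at $\bar x$ gives
$$\e v^\e(\bar x) - \frac{L}{r}\,\mathrm{tr}\!\bigl(A(\bar x)(I-\hat p \hat p^T)\bigr) + H(\bar x, L\hat p) \le 0.$$
Second, since $x_\e$ is the global minimum, the supersolution property used with the constant test function $v^\e(x_\e)$ yields $\e v^\e(x_\e) \ge -H(x_\e,0) \ge -|H(\cdot,0)|_\infty$, and therefore $-\e v^\e(\bar x) \le -\e v^\e(x_\e) \le |H(\cdot,0)|_\infty$. Third, applying~\eqref{ssa4} with $x = \bar x$, $y = x_\e$, and $p = L\hat p$ (so that $|p| = L$) produces the lower bound
$$H(\bar x, L\hat p) \ge L\bigl[H(x_\e,\hat p) + |H(\cdot,0)|_\infty + Nr|\sigma_x|_\infty^2\bigr].$$

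Combining these three inequalities yields, after rearrangement,
$$L\,H(x_\e, \hat p) + (L-1)|H(\cdot,0)|_\infty + LNr|\sigma_x|_\infty^2 \;\le\; \frac{L}{r}\,\mathrm{tr}\!\bigl(A(\bar x)(I-\hat p \hat p^T)\bigr),$$
and the heart of the argument is to show that the right-hand side cannot dominate the left, so that the assumption $M>0$ becomes untenable. This is where I expect the main obstacle to lie: the naive bound on the trace has a $1/r$ singularity, which must be absorbed. The idea I would pursue is to exploit the structure $A = \sigma\sigma^T$ together with the Lipschitz continuity of $\sigma$, writing $\sigma(\bar x) = \sigma(x_\e) + O(r)$ and subtracting off the contribution of $\mathrm{tr}(A(x_\e)(I-\hat p\hat p^T))$ (which can itself be related to $H(x_\e,0)$ via the supersolution equation at $x_\e$, if needed through a mollification of $v^\e$ to make the reasoning at the minimum rigorous). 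The leftover then becomes $O(L|\sigma_x|_\infty^2 r)$ with a constant of order $N$, exactly matching the term $LN r|\sigma_x|_\infty^2$ in~\eqref{ssa4}; this matching is clearly the whole purpose of the peculiar form of that assumption. Once the cancellation is made explicit, the strict inequality $L>1$ provides the desired contradiction and forces $M\le 0$.
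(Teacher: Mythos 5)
Your proposal diverges from the paper's approach in a way that leaves a genuine gap. You maximize a single-variable function $x\mapsto v^\e(x)-v^\e(x_\e)-L|x-x_\e|$ with $x_\e$ frozen as the global minimum, and then try to combine the subsolution inequality at $\bar x$, the supersolution inequality at $x_\e$, and~\eqref{ssa4}. Two things break. First, the second-order term in your subsolution inequality, $\tfrac{L}{r}\,\mathrm{tr}\!\bigl(A(\bar x)(I-\hat p\hat p^T)\bigr)$, is genuinely of order $1/r$ and has nothing in your setup to cancel it: at the minimum $x_\e$ the natural (and essentially only useful) test function is a constant, whose Hessian is zero, so the supersolution inequality there reduces to $\e v^\e(x_\e)+H(x_\e,0)\ge 0$ and produces no trace to subtract off. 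Writing $\sigma(\bar x)=\sigma(x_\e)+O(r)$ does not help, since $\mathrm{tr}\!\bigl(A(x_\e)(I-\hat p\hat p^T)\bigr)/r$ is still $O(1/r)$ and is not present in any inequality you control. Second, after applying~\eqref{ssa4} you are left with the term $L\,H(x_\e,\hat p)$, which is evaluated at gradient $\hat p$ with $|\hat p|=1$; the supersolution inequality at $x_\e$ only involves $H(x_\e,0)$, so $H(x_\e,\hat p)$ is completely unconstrained (it could be very negative), and your final inequality collapses without yielding a contradiction.

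The paper avoids both problems by a genuine doubling of variables: it considers
\begin{equation*}
M=\max_{x,y\in\T^N}\bigl\{v(x)-Lv(y)+(L-1)\min v-L|x-y|\bigr\},
\end{equation*}
with both $x$ and $y$ free. Ishii's lemma then yields a supersolution inequality at the maximizer $\bar y$ with gradient $p/L=\hat p$ (exactly the argument of $H(y,\cdot)$ in~\eqref{ssa4}), so the term $L\,H(\bar y,\hat p)$ from that inequality matches the corresponding term in the hypothesis, and the matrix inequality from Ishii's lemma (as exploited in Lemma~\ref{tech lemma} with $e_i=\tilde e_i$) forces the trace difference $-\mathrm{tr}(A(\bar x)X-A(\bar y)Y)$ to be $O(|\bar x-\bar y|)$ rather than $O(1/|\bar x-\bar y|)$, because $B$ acts on the increment $\sigma(\bar x)e_i-\sigma(\bar y)e_i=O(|\bar x-\bar y|)$. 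That pairing of the two viscosity inequalities through Ishii's lemma is the mechanism you are missing.
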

An immediate consequence is
\begin{eqnarray*}
{\rm osc}(v^\e):=  \max v^\e-\min v^\e\leq \sqrt{N}L.
\end{eqnarray*}
To make the article self-contained, we present the proof of this result in Appendix.

\subsection{Preliminary lemma for Ishii-Lions's method}

The following technical lemma is a key tool in this article.
\begin{lem}\label{tech lemma}
Supose $v^\e$ is a continuous viscosity solution of~\eqref{approx-scal}
in some open subset $\Omega$ with $A(x)=\sigma(x)\sigma^T(x),$
$\sigma\in W^{1,\infty}(\overline{\Omega}).$ 
Let $\Psi:\R^+ \to \R^+$ be an increasing concave 
function such that $\Psi(0)=0$ and the maximum of  
\begin{eqnarray*}
\max_{x,y \in \overline{\Omega}}\{ v^\e(x)-v^\e(y)-\Psi(|x-y|)\},
\end{eqnarray*}
is achieved at $(\overline{x},\overline{y}).$
If we can write the viscosity inequalities for $v^\e$
at $\overline{x}$ and $\overline{y},$ then 
for every
$\varrho>0,$ there exists
$(p,X) \in \overline{J}^{2,+}v^\e(\overline{x}),(p,Y) \in \overline{J}^{2,-}v^\e(\overline{y})$
such that
\begin{eqnarray}\label{mat}
\left(
\begin{array}{ccc}
X & 0 \\
0 & -Y
\end{array}
\right)
\le A+\varrho A^2,
\end{eqnarray}
with
\begin{eqnarray}
\label{mat-bis}
p=\Psi'(|\overline{x}-\overline{y}|) q,
\quad q=\frac{\overline{x}-\overline{y}}{|\overline{x}-\overline{y}|},
\quad B=\frac{1}{|\overline{x}-\overline{y}|} (I-q \otimes q),\\
\label{mat-ter}
A=\Psi'(|\overline{x}-\overline{y}|)
\left(
\begin{array}{ccc}
B & -B \\
-B & B
\end{array}
\right)
+\Psi''(|\overline{x}-\overline{y}|)
\left(
\begin{array}{ccc}
q \otimes q & -q \otimes q \\
-q \otimes q & q \otimes q
\end{array}
\right)
\end{eqnarray}
and the following estimate holds
\begin{eqnarray}\label{estim-trace1}
&&-{\rm trace}(A(\overline{x})X
-A(\overline{y})Y)\geq 
- N |\sigma_x|_\infty^2 |\overline{x}-\overline{y}|\Psi'(|\overline{x}-\overline{y}|)+O(\varrho).
\end{eqnarray}
If, in addition, \eqref{sig-deg} holds, then
there exists $\tilde{C}=\tilde{C}(N,\nu,|\sigma|_\infty, |\sigma_x|_\infty)$ (given by~\eqref{def-ctilde})
such that
\begin{eqnarray}\label{ineq-tracet}
-{\rm trace}(A(\overline{x})X-A(\overline{y})Y)
\geq
-4\nu\Psi''(|\overline{x}-\overline{y}|)
-\tilde{C}\Psi'(|\overline{x}-\overline{y}|)|\overline{x}-\overline{y}|
+O(\varrho)
\end{eqnarray}
and, if the maximum is positive, then
\begin{eqnarray} \label{estimation-outil}
&&-4\nu\Psi''(|\overline{x}-\overline{y}|)
-\tilde{C}\Psi'(|\overline{x}-\overline{y}|)|\overline{x}-\overline{y}|
+H(\overline{x},\Psi'(|\overline{x}-\overline{y}|)q)
-H(\overline{y},\Psi'(|\overline{x}-\overline{y}|)q) < 0.
\end{eqnarray}
\end{lem}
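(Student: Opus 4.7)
I would proceed in three main steps. First, the existence of jets $(p,X) \in \overline{J}^{2,+}v^\e(\overline{x})$ and $(p,Y) \in \overline{J}^{2,-}v^\e(\overline{y})$ satisfying the matrix inequality \eqref{mat} is a direct application of the Crandall-Ishii theorem of sums \cite{cil92} to the doubled function $(x,y)\mapsto v^\e(x)-v^\e(y)-\Psi(|x-y|)$ at its maximum point $(\overline{x},\overline{y})$. Writing $r := |\overline{x}-\overline{y}|$ (the case $r=0$ is trivial since the maximum then vanishes), the penalty $\phi(x,y):=\Psi(|x-y|)$ is smooth at $(\overline{x},\overline{y})$, and direct differentiation gives $D_x\phi = -D_y\phi = \Psi'(r)q$ (the gradient $p$ of \eqref{mat-bis}), while the block Hessian $D^2\phi$ coincides with $\mathcal{A}$ as displayed in \eqref{mat-ter}.

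For \eqref{estim-trace1} I would use the standard Ishii-Lions multiplication trick: introduce the positive semidefinite matrix $M := WW^T$ with $W := \bigl(\begin{smallmatrix}\sigma(\overline{x})\\ \sigma(\overline{y})\end{smallmatrix}\bigr)$. Multiplying \eqref{mat} by $M$ in trace form yields $\mathrm{trace}(A(\overline{x})X-A(\overline{y})Y) \leq \mathrm{trace}(M(\mathcal{A}+\varrho\mathcal{A}^2))$. The block structure of $\mathcal{A}$ collapses the right-hand side to an expression involving $(\sigma(\overline{x})-\sigma(\overline{y}))(\sigma(\overline{x})-\sigma(\overline{y}))^T$, whose Frobenius norm is bounded by $N|\sigma_x|_\infty^2 r^2$; combining this with the nonpositivity of the $\Psi''$-contribution (which is dropped) gives \eqref{estim-trace1}.

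The sharper bound \eqref{ineq-tracet} requires a more delicate argument, since the above multiplication by $M$ only produces a $\Psi''$-contribution of order $r^2|\Psi''(r)|$, far below the $4\nu|\Psi''(r)|$ needed. The key is to test \eqref{mat} with the pair $(q,-q)$, which yields $(X-Y)q\cdot q \leq 4\Psi''(r)$, together with the bilateral test $(\xi,\xi)$ which gives $X\leq Y$. Consequently the smallest eigenvalue of $X-Y$ is at most $4\Psi''(r)$ while the remaining eigenvalues are nonpositive, so $\mathrm{trace}(X-Y)\leq 4\Psi''(r)$. By the elementary inequality $\mathrm{trace}(AZ)\leq\nu\,\mathrm{trace}(Z)$ valid for $A\geq\nu I\geq 0$ and $Z\leq 0$, this upgrades to $\mathrm{trace}(A(\overline{x})(X-Y))\leq 4\nu\Psi''(r)$. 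Decomposing $A(\overline{x})X-A(\overline{y})Y = A(\overline{x})(X-Y) + (A(\overline{x})-A(\overline{y}))Y$, the asymmetry term is handled using the Lipschitz continuity $|A(\overline{x})-A(\overline{y})|\leq Cr$ together with the matrix-inequality bound $-Y\leq \Psi'(r)r^{-1}(I-q\otimes q)+\Psi''(r)q\otimes q$ (obtained by testing \eqref{mat} with $(0,\eta)$), producing the error $\tilde C \Psi'(r) r$ with $\tilde C$ depending explicitly on $N,\nu,|\sigma|_\infty,|\sigma_x|_\infty$.

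Finally, \eqref{estimation-outil} follows by combining the two viscosity inequalities. The subsolution property at $\overline{x}$ reads $\e v^\e(\overline{x})-\mathrm{trace}(A(\overline{x})X)+H(\overline{x},p)\leq 0$ and the supersolution property at $\overline{y}$ reads $\e v^\e(\overline{y})-\mathrm{trace}(A(\overline{y})Y)+H(\overline{y},p)\geq 0$. Subtracting, using positivity of the maximum (which forces $v^\e(\overline{x})-v^\e(\overline{y}) > \Psi(r)\geq 0$ so that the $\e$-term can be dropped), and substituting \eqref{ineq-tracet} yields \eqref{estimation-outil} as a strict inequality. The main difficulty in the lemma is \eqref{ineq-tracet}: extracting the sharp factor $4\nu$ in the $\Psi''$ term requires the joint matrix inequality rather than separate one-sided bounds on $X$ and $Y$, and this is the cornerstone estimate on which all subsequent gradient bounds of the paper depend.
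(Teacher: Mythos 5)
Your treatment of the matrix inequality \eqref{mat}, the derivative formulas \eqref{mat-bis}--\eqref{mat-ter}, and the estimate \eqref{estim-trace1} via the nonnegative matrix $WW^T$ is correct and essentially the same as the paper's computation (the paper phrases the $WW^T$ contraction as a sum over a common orthonormal basis $e_i=\tilde e_i$, but the two forms are algebraically identical). The derivation of \eqref{estimation-outil} from \eqref{ineq-tracet} and the two viscosity inequalities is also fine.

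The proof of \eqref{ineq-tracet}, however, has a genuine gap in the handling of the commutator term. You write
\begin{eqnarray*}
A(\overline{x})X - A(\overline{y})Y = A(\overline{x})(X-Y) + \bigl(A(\overline{x})-A(\overline{y})\bigr)Y,
\end{eqnarray*}
bound the first summand by $4\nu\Psi''$ via the $(q,-q)$ and $(\xi,\xi)$ tests (this part is clever and correct up to $O(\varrho)$), and propose to control the second summand using the one-sided bound $-Y\le \Psi'B+\Psi''q\otimes q + O(\varrho)$ obtained from the $(0,\eta)$ test. But $A(\overline{x})-A(\overline{y})$ is a symmetric matrix with no sign, so bounding $\mathrm{trace}\bigl((A(\overline{x})-A(\overline{y}))Y\bigr)$ requires a \emph{two-sided}, $\varrho$-uniform bound on $Y$. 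The Crandall--Ishii theorem of sums gives only $\begin{pmatrix}X&0\\0&-Y\end{pmatrix}\ge -\bigl(\tfrac1\varrho+\lVert\mathcal A\rVert\bigr)I$ as a lower bound, so the only available upper bound on $Y$ is of order $1/\varrho$; the resulting error is $O(r/\varrho)$, which blows up rather than vanishes as $\varrho\to0$. This cannot be fixed by a different algebraic split because any decoupling of $X$ from $Y$ faces the same obstruction. The paper sidesteps the issue entirely: it never estimates $X$ or $Y$ alone, but instead writes $\mathrm{trace}(A(\overline{x})X)-\mathrm{trace}(A(\overline{y})Y)=\sum_i\bigl(\langle X\sigma(\overline{x})e_i,\sigma(\overline{x})e_i\rangle-\langle Y\sigma(\overline{y})\tilde e_i,\sigma(\overline{y})\tilde e_i\rangle\bigr)$ for \emph{two different} orthonormal bases and applies the joint inequality \eqref{mat} term by term, getting bounds that depend only on $\zeta_i-\xi_i=\sigma(\overline{x})e_i-\sigma(\overline{y})\tilde e_i$ and $O(\varrho)$. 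The $4\nu\Psi''$ contribution is then extracted by taking $e_1=\sigma(\overline{x})^{-1}q/|\sigma(\overline{x})^{-1}q|$ and $\tilde e_1=-\sigma(\overline{y})^{-1}q/|\sigma(\overline{y})^{-1}q|$, so that $\zeta_1$ is proportional to $q$ and $\langle\zeta_1,(q\otimes q)\zeta_1\rangle\ge 4\nu$ while $\langle\zeta_1,B\zeta_1\rangle=0$; the remaining basis directions only contribute the $\tilde C\Psi'r$ error. Your $(q,-q)$ test is in fact a weaker form of this choice, and it is precisely the refusal to ever split off a lone $Y$ that makes the paper's argument close.
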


The first part of the result is a basic application of Ishii's Lemma
in viscosity theory, see~\cite{cil92}. The trace estimates can be found
in~\cite{il90, barles91b, bs01} and~\eqref{estimation-outil} 
takes benefit of the ellipticity of the equation and allows to
apply Ishii-Lion's method introduced in~\cite{il90}.
For reader's convenience, we provide
a proof in the Appendix.

\subsection{Proof of Theorem~\ref{uni_grad1}.}\label{Case 1}
The proof relies on some ideas of~\cite{barles91b}. The main difference is that,
thanks to the uniform oscillation bound presented in Lemma~\ref{oscillation},
we can obtain a gradient bound independent of the $L^\infty$ norm of the
solution.
\smallskip

\noindent{\it Step 1. H\"older continuity.}
We claim that there exist some constants $\gamma\in (0,1], K>0$ independent of 
$\e$ such that
\begin{eqnarray*}
|v^\e|_{C^{0,\gamma}} \le K_0.
\end{eqnarray*}
We skip the $\e$ superscript in $v^\e$  hereafter for sake of notations. 
 
Thanks to Lemma~\ref{oscillation}, the oscillation of $v$ is uniformly (in $\e$) 
bounded by a constant $\mathcal{O}$. 

Consider
\begin{eqnarray*}
\max_{x,y \in \T^N}\{ v(x)-v(y)-\Psi(|x-y|)\},
\end{eqnarray*}
where $\Psi(s)=K_0 s^{\gamma}$. Our goal is to choose $\gamma\in (0,1], K_0>0$, which depend only 
on  $C,\alpha$ given by the hypothesis~\eqref{LN-ell1} such that the above maximum 
is nonegative. To do so, we assume by contradiction that the maximum is positive and hence, 
it is achieved at $(\overline{x},\overline{y})$
with $\overline{x}\not= \overline{y}$ thanks to the continuity of $v$. We next choose 
$r$ depending on $K_0$ such that $K_0 r^{\gamma}=\mathcal{O}+1$.

With such a choice of $r$, it is clear that $|\overline{x}-\overline{y}| < r$. 
Denote $s:=|\overline{x}-\overline{y}|.$  
From Lemma~\ref{tech lemma} and~\eqref{LN-ell1}, 
we will have a contradiction if we can choose $K,\gamma$ such that
\begin{eqnarray*}
-4\nu \Psi''(s)-\tilde{C} s \Psi'(s) \geq Cs^{ \alpha}\Psi'(s)^{ \alpha+2}+C\Psi'(s)^2+C.
\end{eqnarray*}
Computing $\Psi'(s)= K_0\gamma s^{\gamma-1}$ and  $\Psi''(s)= K_0\gamma (\gamma -1) s^{\gamma-2},$
we have to prove
\begin{eqnarray*}
4\nu K_0\gamma(1-\gamma)s^{ \gamma-2}-\tilde{C}K_0\gamma s^{\gamma} \ge 
Cs^{ \alpha}(K_0\gamma s^{\gamma-1})^{ \alpha+2}+
C(K_0\gamma s^{\gamma -1})^2 +C.
\end{eqnarray*}

It is clear that $\nu K_0\gamma(1-\gamma)s^{ \gamma-2}\ge \tilde{C}K_0\gamma s^{\gamma}+C$ 
when $r$ is small enough. Hence, the above inequality holds true if we can choose 
$K_0,\gamma$ such that the two following inequalities hold,
\begin{eqnarray*}
\nu K_0\gamma(1-\gamma)s^{ \gamma-2} \ge Cs^{ \alpha}(K_0\gamma s^{\gamma-1})^{ \alpha+2}
&\Leftrightarrow& \nu (1-\gamma) \ge C(K_0s^{\gamma})^{ \alpha+1}\gamma^{ \alpha+1},
\end{eqnarray*}
and 
\begin{eqnarray*}
\nu K_0\gamma(1-\gamma)s^{ \gamma-2} \ge C\Psi'^2 \Leftrightarrow \nu (1-\gamma)\ge C K_0s^{\gamma}\gamma.
\end{eqnarray*}
Since $K_0 s^{\gamma}\leq K_0 r^{\gamma}\leq \mathcal{O}+1,$ both inequalities hold true
when $\gamma$ is small enough depending on the oscillation $\mathcal{O}$ (but not on $K_0$).
This proves the claim.
\smallskip

\noindent{\it Step 2. Improvement of the H\"older regularity to Lispchitz
regularity.} From the previous step,
$v$ is $\gamma$  H\"older continuous  ($\gamma$ is possibly small)
and the H\"older constant $K_0$ can be chosen to be independent of $\e$. 
We fix such a $\gamma$. We also recall that, from Lemma~\ref{oscillation},  
the oscillation of $v$ is bounded by a constant $\mathcal{O}$ independent of $\e.$ 

We first construct a concave function $\Psi: [0,r]\to \R_+$ by
\begin{eqnarray}\label{fct-conc-lip-hold}
\Psi(s)=A_1 [A_2s-(A_2s)^{1+\gamma}],
\end{eqnarray}
where $r, A_1, A_2>0$, which depend only on  $C,\alpha,\beta$ given by the hypothesis~\eqref{LN-ell1}, 
will be precised later. We extend $\Psi$ into $\R_+$
by defining $\Psi(s)=\Psi(r)$ for $s \ge r$.

We compute, for $0\leq s<r,$
\begin{eqnarray*}
\Psi'(s)=A_1A_2 [1-A_2^{\gamma}(1+\gamma)s^{\gamma}],
\quad \Psi''(s)=-A_1A_2^{1+\gamma}\gamma(1+\gamma)s^{\gamma-1}<0.
\end{eqnarray*}

We then choose $r$ depending on $A_2$ ($A_2$ may vary in the next arguments) 
such that
\begin{eqnarray}\label{R1}
A_2r=\frac{1}{3} \quad\text{and} \quad \Psi(r)=\mathcal{O}+1
\end{eqnarray}
A consequence of this choice is that $A_1$ is now fixed
since 
$A_1 (3^{-1}-3^{-1-\gamma})=\mathcal{O}+1.$
It is straightforward to see that $\Psi$ is a smooth concave increasing 
function on $[0,r)$ satisfying $\Psi(0)=0$ and, for all $s\in [0,r],$
\begin{eqnarray}\label{ineg-grad}
A_1A_2 [1-\frac{1+\gamma}{3^\gamma}] = \Psi'(r)\leq \Psi'(s)\leq \Psi'(0)=A_1A_2.
\end{eqnarray}

Consider
\begin{eqnarray*}
M:=\max_{x,y \in \T^N}\{ v(x)-v(y)-\Psi(|x-y|)\}.
\end{eqnarray*}
If $M\le 0$ then the theorem holds with $K=A_1A_2.$
The rest of the proof consists in proving that $M$ is indeed nonpositive
for $A_2$ big enough.
We argue by contradiction assuming that $M>0.$ This maximum is achieved at 
$(\overline{x},\overline{y})$
with $\overline{x}\not= \overline{y}$. With the choice of $r$ in the condition~\eqref{R1} 
and the fact that $\Psi$ is non-decreasing, it is clear that $|\overline{x}-\overline{y}| < r$.
 
Denote $s:=|\overline{x}-\overline{y}|$.
From~~\eqref{LN-ell1} and Lemma~~\ref{tech lemma},
we have
\begin{eqnarray*}
&&-4\nu  \Psi''(s) -\tilde{C}s \Psi'(s)< Cs^{\alpha}\Psi'(s)^{\alpha+2}+C+C\Psi'(s)^2,
\end{eqnarray*}
which gives us
\begin{eqnarray*}
 4\nu  A_1 A_2^{1+\gamma}\gamma(1+\gamma)s^{\gamma-1} 
-\tilde{C}A_1A_2 s[1-A_2^{\gamma}(1+\gamma)s^{\gamma}]<Cs^{\alpha}\Psi'(s)^{\alpha+2}+C+C\Psi'(s)^2.
\end{eqnarray*}
The goal now is to have a contradicton in the above inequality for large $A_2.$

We first note that it is possible to increase $A_2$ in order that
\begin{eqnarray}\label{R2}
\nu  A_1 A_2^{1+\gamma}\gamma(1+\gamma)s^{\gamma-1} -\tilde{C}A_1A_2 s[1-A_2^{\gamma}(1+\gamma)s^{\gamma}]\geq 0.
\end{eqnarray}
Indeed, the inequality is true for all $A_2\geq 1$ if $s\geq 1$ and, when $s\leq 1,$ it is sufficient
to take $A_2\geq (\nu\gamma)^{-1}\tilde{C}.$

Therefore, it is enough to show that we may choose $A_2$ such that
the following inequalities hold true,
\begin{eqnarray}\label{hope1}
\nu  A_1 A_2^{1+\gamma}\gamma(1+\gamma)s^{\gamma-1} \ge Cs^{\alpha}\Psi'(s)^{\alpha+2}
=C\big (s\Psi'(s)^{ \frac{1}{1-\gamma}}\big )^{ \alpha}~~\Psi'(s)^{ \alpha+2-\frac{\alpha}{1-\gamma}},
\end{eqnarray}
and
\begin{eqnarray}\label{hope2}
2\nu  A_1A_2^{1+\gamma}\gamma(1+\gamma)s^{\gamma-1}\ge C+C\Psi'(s)^2.
\end{eqnarray}

We first prove that it is possible to choose 
$A_2$ such that \eqref{hope1} holds true.
We know that $\Psi$ is concave and $\gamma$-H\"older continuous, 
so we have 
$$
s\Psi'(s) \le \Psi(s) < v(\overline{x})-v(\overline{y})\le K_0 s^{ \gamma},
$$
Hence 
\begin{eqnarray}\label{interm121}
s \Psi'(s)^{ \frac{1}{1-\gamma}} \le K_0^{ \frac{1}{1-\gamma}}
\end{eqnarray}
and it follows that~\eqref{hope1} is true provided
\begin{eqnarray}\label{interm122}
\nu  A_1A_2^{1+\gamma}\gamma(1+\gamma)s^{\gamma-1} 
\ge CK_0^{ \frac{\alpha}{1-\gamma}}~(A_1 A_2)^{ \alpha+2-\frac{\alpha}{1-\gamma}}.
\end{eqnarray}
Recalling that $1/s \ge 1/r > A_2$ from~\eqref{R1}, we have
\begin{eqnarray}\label{interm123}
A_2^{1+\gamma}s^{\gamma-1} \geq A_2^2
\end{eqnarray}
and~\eqref{interm122} is true if
\begin{eqnarray}\label{R3}
\nu  A_1A_2^2\gamma(1+\gamma)\ge CK_0^{ \frac{\alpha}{1-\gamma}}~~(A_1 A_2)^{ \alpha+2-\frac{\alpha}{1-\gamma}}.
\end{eqnarray}
Finally,~\eqref{R3} indeed holds for $A_2$ big enough since $\alpha+2-\frac{\alpha}{1-\gamma}<2$.

We now prove that it is possible to choose 
$A_2$ such that \eqref{hope2} holds true.
At first, from~\eqref{interm123}, we have
$\nu  A_1A_2^{1+\gamma}\gamma(1+\gamma)s^{\gamma-1}\ge \nu  A_1A_2^2 \gamma(1+\gamma)\geq C$ 
when $A_2$ big enough. So, \eqref{hope2} holds if we can choose $A_2$ such that 
\begin{eqnarray}\label{ineg134}
\nu  A_1A_2^{1+\gamma}\gamma(1+\gamma)\ge Cs^{1-\gamma}\Psi'(s)^2.
\end{eqnarray}
From~\eqref{ineg-grad} and~\eqref{interm121},
\begin{eqnarray*}
Cs^{1-\gamma}\Psi'(s)^2=C(s\Psi'(s)^{\frac{1}{1-\gamma}})^{1-\gamma}\Psi'(s)
\leq C K_0 A_1 A_2,
\end{eqnarray*}
so~\eqref{ineg134} holds provided
\begin{eqnarray*}
\nu  A_1A_2^{1+\gamma}\gamma(1+\gamma)\ge C K_0 A_1A_2,
\end{eqnarray*}
which is obviously true if $A_2$ is big enough.

The proof of the theorem is complete
\hfill$\Box$
\subsection{Proof of Theorem \ref{uni_grad2}.}\label{Case 2}
From~\cite{clp10}, we have
\begin{eqnarray}\label{clp2010}
v \text{ is $\frac{k-2}{k-1}$-H\"older continuous and the H\"older constant is equal to $K_0$},
\end{eqnarray}
where $k>2$ is given by the  assumption~\eqref{clp}. 
In~\cite{clp10},  the authors prove that the H\"older constant depends only on 
$N,k,|\e v^\e|_\infty$ and, since $|\e v^\e|_\infty \le |H(x,0)|_\infty$, 
$K_0$ can be chosen independent of $\e$.
A by-product of the above result (or of Lemma~\ref{oscillation}) is that 
the oscillation of $v^\e$ is bounded by a constant $\mathcal{O}>0$
independent of $\e.$ 

Hereafter we write $v$ for $v^\e.$
\smallskip

\noindent{\it Step 1. Improvement of the H\"older exponent.}
Fix any $\chi\in (\frac{k-2}{k-1},1)$. We show that $v$ is $\chi$-H\"older continuous.
 
We set
\begin{eqnarray*}
&&\Psi(s)=K s^{\chi},
\end{eqnarray*}
where $K>0$, which depend only on  $C,\alpha,\beta$ given by the hypothesis~\eqref{LN-ell2}, 
will be precised later. We fix a constant $r$ which depends on $K$ as follows
\begin{eqnarray}\label{fixrad}
Kr^{\chi}=\mathcal{O}+1.
\end{eqnarray}
Consider
\begin{eqnarray}\label{max135}
\max_{x,y \in \T^N}\{ v(x)-v(y)-K|x-y|^{\chi}\}.
\end{eqnarray}
If the maximum is nonpositive then the theorem holds.
From now on, we argue by contradiction
assuming that the maximum is positive. The maximum is achieved at 
$(\overline{x},\overline{y})$ with $\overline{x}\not= \overline{y}$. 
With the choice of $r$ in~\eqref{fixrad}, it is clear 
that $|\overline{x}-\overline{y}| < r$.

Denote $s:=|\overline{x}-\overline{y}|$.
From~\eqref{LN-ell2} and Lemma~\ref{tech lemma}, we have
\begin{eqnarray*}
-4\nu  \Psi''(s) -\tilde{C}s \Psi'(s)< \omega\left( (1+\Psi'(s)^\beta)s\right)\,s^{\alpha}\Psi'(s)^{(k-1)\alpha+k}+o(|p|^k),
\end{eqnarray*}
with $\beta<k-1,$
$\Psi'(s)=K\chi s^{\chi-1},$ $\Psi''(s)=K\chi(\chi-1)s^{\chi-2}$
and $p=\Psi'(s)\frac{\overline{x}-\overline{y}}{|\overline{x}-\overline{y}|}.$ 
We can rewrite the above inequality as
\begin{eqnarray*}
4\nu K\chi(1-\chi)s^{ \chi-2} -\tilde{C}K\chi s^{ \chi}<\omega\left((1+\Psi'(s)^\beta)s\right)\, 
s^{\alpha}\Psi'(s)^{(k-1)\alpha+k}+o(|p|^k).
\end{eqnarray*}
At first, from~\eqref{fixrad}, it is possible to increase $K$
such that $r$ is small enough in order to have
\begin{eqnarray*}
2\nu K\chi(1-\chi)s^{ \chi-2} \ge \tilde{C}K\chi s^{ \chi}, \quad \text{for $s\leq r.$}
\end{eqnarray*} 
Hence, to get a contradiction in the above inequality, we only need to choose $K$ such that
the two following inequalities hold,
\begin{eqnarray}\label{hol0}
\nu K\chi(1-\chi)s^{ \chi-2} \ge \omega\left((1+\Psi'(s)^\beta)s\right) s^{ \alpha}\Psi'(s)^{(k-1)\alpha+k}
\end{eqnarray}
and
\begin{eqnarray}\label{hol1}
\nu K\chi(1-\chi)s^{ \chi-2}\ge o(|p|^k).
\end{eqnarray}

\noindent{\it Step 1.1. Choosing $K$ large enough such that we have~\eqref{hol0}.} 
Writing that the maximum~\eqref{max135} is positive and using the
concavity of $\Psi$ and~\eqref{clp2010}, 
we have $s\Psi'(s) \le Ks^{ \chi}=\Psi(s) < v(\overline{x})-v(\overline{y})\le K_0 s^{ \frac{k-2}{k-1}}$, 
hence 
\begin{eqnarray}\label{strange2}
s\Psi'(s)^{ k-1} \le K_0^{ k-1}\text{ and }\frac{1}{s}\ge \left(\frac{K}{K_0}\right)^{ \frac{1}{\chi-\frac{k-2}{k-1}}}.
\end{eqnarray}
It follows
\begin{eqnarray*}
\omega\left((1+\Psi'(s)^\beta)s\right) s^{\alpha}\Psi'(s)^{(k-1)\alpha+k}
&=&\omega\left((1+\Psi'(s)^\beta)s\right) \left( s\Psi'(s)^{k-1}\right)^{\alpha} \Psi'(s)^{ k}\\
&\leq& \omega\left((1+\Psi'(s)^\beta)s\right) K_0^{\alpha(k-1)}(K\chi s^{\chi-1})^{k}.
\end{eqnarray*}
Therefore,~\eqref{hol0} is true provided
\begin{eqnarray*}
\nu K\chi(1-\chi)s^{\chi-2}
\ge 
\omega\left((1+\Psi'(s)^\beta)s\right) K_0^{\alpha(k-1)}(K\chi s^{ \chi-1})^{ k}.
\end{eqnarray*}
Setting $\tilde{\nu}= \nu \chi(1-\chi)K_0^{\alpha(1-k)}\chi ^{-k},$
which is a constant independent of $K, s,$ we rewrite the above desired inequality as
\begin{eqnarray}\label{hol5}
&& 
\tilde{\nu}\left(\frac{1}{s}\right)^{ k(\chi-1)-\chi+2}\ge \omega\left((1+\Psi'(s)^\beta)s\right) K^{k-1}.
\end{eqnarray}
From~\eqref{strange2} and the choice $\chi> \frac{k-2}{k-1},$
it follows that inequality~\eqref{hol5} holds true if 
\begin{eqnarray*}
\tilde{\nu} \left(\frac{K}{K_0}\right)^{  \frac{k(\chi-1)-\chi+2}{\chi-\frac{k-2}{k-1}}}
\ge  \omega\left((1+\Psi'(s)^\beta)s\right) K^{ k-1}
\end{eqnarray*}
or 
\begin{eqnarray}\label{ineq701}
\tilde{\nu} \left(\frac{1}{K_0}\right)^{ k-1}\ge  \omega\left((1+\Psi'(s)^\beta)s\right).
\end{eqnarray}
Using~\eqref{strange2} and $\beta <k-1,$ we have
\begin{eqnarray*}
s\Psi'(s)^\beta= s^{1-\frac{\beta}{k-1}}\left(s\Psi'(s)^{k-1}\right)^{\frac{\beta}{k-1}}
\leq s^{\frac{k-1-\beta}{\beta}}K_0^\beta \leq r^{\frac{k-1-\beta}{\beta}}K_0^\beta \mathop{\to}_{r\to 0} 0.
\end{eqnarray*}
Finally~\eqref{ineq701} holds true for large $K$ since $r\to 0$ as $K\to +\infty$
by~\eqref{fixrad}. This proves~\eqref{hol0}. 

\smallskip

\noindent{\it Step 1.2. Choosing $K$ large enough such that we have~\eqref{hol1}.} 
We have
\begin{eqnarray*}
o(|p|^k)=\frac{o(|p|^k)}{\Psi'(s)^k}(K\chi s^{ \chi-1})^{k}
=\frac{o(|p|^k)}{\Psi'(s)^k}\chi^k \Psi(s)^{k-1},
\end{eqnarray*}
so~\eqref{hol1} holds if
\begin{eqnarray*}
 \nu (1-\chi) \ge \frac{o(|p|^k)}{\Psi'(s)^k}\chi^{k-1}s^{2-k}\Psi(s)^{k-1}.
\end{eqnarray*}
Recalling that $\Psi(s)\leq K_0 s^{\frac{k-2}{k-1}},$
it is sufficient to ensure
\begin{eqnarray*}
\nu (1-\chi) \ge \frac{o(|p|^k)}{\Psi'(s)^k} (\chi K_0)^{k-1}.
\end{eqnarray*}
Since 
\begin{eqnarray*}
|p|=\Psi'(s)=\chi K s^{\chi-1}\geq \chi K r^{\chi-1}= \chi (\mathcal{O}+1)^{\frac{\chi -1}{\chi}}K^{\frac{1}{\chi}}
\end{eqnarray*}
by~\eqref{fixrad}, we have that $|p|\to +\infty$ as $K\to +\infty.$
We then obtain that the above inequality holds true for large $K$
concluding~\eqref{hol1}. This ends Step 1.
\smallskip

\noindent{\it Step 2.  Improvement of the new H\"older exponent to Lipschitz continuity.}
We are now ready to prove the lipschitz continuity.

The beginning of the proof is similar to the one of Theorem~\ref{uni_grad1}.
We consider the increasing concave function $\Psi$ given by~\eqref{fct-conc-lip-hold}
for any $\gamma\in (0,1)$ and $A_1, A_2,r>0$ satisfying~\eqref{R1} and set
\begin{eqnarray*}
M=\max_{x,y \in \T^N}\{ v(x)-v(y)-\Psi(|x-y|)\}.
\end{eqnarray*}
We are done if the maximum
is nonnegative.
Assuming by contradiction that the maximum is positive,
we know it is achieved at $(\overline{x},\overline{y})$ with $s:=|\overline{x}-\overline{y}| < r$.
Applying Lemma~\ref{tech lemma} and~\eqref{LN-ell2}, we see that we reach
the desired contradiction if the following inequalities hold
\begin{eqnarray}\label{BLN1}
\nu  A_1 A_2^{1+\gamma}\gamma(1+\gamma)s^{\gamma-1} 
&\ge& \omega\left((1+\Psi'(s)^\beta)s\right) s^{\alpha}\Psi'(s)^{(k-1)\alpha+k}\\
&=&\omega\left((1+\Psi'(s)^\beta)s\right) (s\Psi'(s)^{\frac{1}{1-\chi}})^{ \alpha} \Psi'(s)^{ (k-1)\alpha+k-\frac{\alpha}{1-\chi}}\nonumber
\end{eqnarray}
and 
\begin{eqnarray}\label{BLN2}
\nu  A_1 A_2^{1+\gamma}\gamma(1+\gamma)s^{\gamma-1} \ge o(|p|^k)
\quad \text{where $|p|=\Psi'(s)$}.
\end{eqnarray}
Next substeps are devoted to prove that we can fulfill the two above inequalities
by choosing $A_2$ large enough. It then leads to a contradiction which implies
that the maximum $M$ is nonnegative concluding that $v$ is Lipschitz continuous
with constant $A_1A_2$ and ending the proof of Theorem~\ref{uni_grad2}.
\smallskip

\noindent{\it Step 2.1. Choosing $A_2$ such that \eqref{BLN1} holds true.}
From Step 1, we know that $v$ is $\chi$-H\"older continuous for any $\chi \in (\frac{k-2}{k-1},1)$
with a constant $K=K_\chi$ which is independent of~$\e.$
We then have $s\Psi'(s) \le \Psi(s) < K_\chi s^\chi$, hence 
\begin{eqnarray}\label{strange10}
s\Psi'(s)^{ \frac{1}{1-\chi}} \le K_\chi^{ \frac{1}{1-\chi}}.
\end{eqnarray}
Moreover
\begin{eqnarray}
\label{strange1010}
&& c_\gamma A_1A_2:= \frac{{\rm log}\,3-1}{3}\gamma  A_1A_2  
\leq A_1A_2 \left( 1-\frac{1+\gamma}{3^\gamma}\right) = \Psi'(r)\leq \Psi'(s)\leq \Psi'(0)=A_1A_2.
\end{eqnarray}
It follows that~\eqref{BLN1} holds provided
\begin{eqnarray*}
&&\nu  A_1 A_2^{1+\gamma}\gamma(1+\gamma)s^{\gamma-1} \ge \omega\left((1+\Psi'(s)^\beta)s\right)
K_\chi^{ \frac{\alpha}{1-\chi}} c_\gamma^{-|(k-1)\alpha+k-\frac{\alpha}{1-\chi}|}
(A_1 A_2)^{ (k-1)\alpha+k-\frac{\alpha}{1-\chi}}.
\end{eqnarray*}
Recalling that $1/s \ge 1/r > A_2$,  the above inequality is true if
\begin{eqnarray*}
&&\nu  A_1 A_2^2\gamma(1+\gamma) \ge \omega\left((1+\Psi'(s)^\beta)s\right) K_\chi^{ \frac{\alpha}{1-\chi}} 
c_\gamma^{-|(k-1)\alpha+k-\frac{\alpha}{1-\chi}|}(A_1 A_2)^{ (k-1)\alpha+k-\frac{\alpha}{1-\chi}}.
\end{eqnarray*}
First of all, we have $\beta<k-1<\frac{1}{1-\chi},$
so, by~\eqref{BLN1},  $\omega\left((1+\Psi'(s)^\beta)s\right)$ is small for small $s.$  
Therefore, to fulfill~\eqref{BLN1}, it is enough to fix
$\chi$ close enough to 1 such that 
\begin{eqnarray}\label{choix-chi123}
(k-1)\alpha+k-\frac{\alpha}{1-\chi}<2
\quad\Leftrightarrow\quad \chi>1-\frac{\alpha}{(k-1)\alpha+k-2}
\end{eqnarray}
and to take $A_2$ large enough.
\smallskip

\noindent{\it Step 2.2. Choosing $A_2$ such that~\eqref{BLN2} holds true.} 
We need to choose $A_2$ such that
\begin{eqnarray*}
\nu  A_1 A_2^{1+\gamma}\gamma(1+\gamma) &\ge& s^{1-\gamma}o(|p|^k)\\
&=&\frac{o(|p|^k)}{\Psi'(s)^k} s^{1-\gamma}\Psi'(s)^k=\frac{o(|p|^k)}{\Psi'(s)^k} (s\Psi'(s)^{\frac{1}{1-\chi}})^{ 1-\gamma} 
\Psi'(s)^{ k-\frac{1-\gamma}{1-\chi}}.
\end{eqnarray*}
Using~\eqref{strange10} and~\eqref{strange1010} again,
we see that the above inequality is true provided
\begin{eqnarray*}
\nu  A_1 A_2^{1+\gamma}\gamma(1+\gamma) 
\ge \frac{o(|p|^k)}{\Psi'(s)^k}K_\chi^{\frac{1-\gamma}{1-\chi}} (A_1A_2)^{ k-\frac{1-\gamma}{1-\chi}}.
\end{eqnarray*}
We fix $\chi\in (\frac{k-2}{k-1},1)$ close enough to 1 such that~\eqref{choix-chi123} holds and
$k-\frac{1-\gamma}{1-\chi}<1+\gamma$. 
Noticing that $\Psi'(s)=|p|\to +\infty$ when $A_2\to +\infty,$ 
the previous inequality holds
when $A_2$ is big enough. Therefore~\eqref{BLN2} holds.
The proof of the theorem is complete.
\hfill$\Box$

\subsection{Extensions}
\label{ext-quasilin} 

As said in the introduction, all proofs still hold true when when $\sigma \in C^{0,\theta}(\T^N;\mathcal{M}_N),$
$\theta\in [\frac{1}{2},1],$
instead of $W^{1,\infty}(\T^N;\mathcal{M}_N).$ 
Actually, on the one side, ~\eqref{estim-trace1} and~\eqref{ineq-tracet} in Lemma~\ref{tech lemma}
are modified as follows: 
$- N |\sigma_x|_\infty^2 |\overline{x}-\overline{y}|\Psi'(|\overline{x}-\overline{y}|)$
(respectively $-\tilde{C}\Psi'(|\overline{x}-\overline{y}|)|\overline{x}-\overline{y}|$)
are replaced by
$-N |\sigma|_{C^{0,\theta}}^2|\overline{x}-\overline{y}|^{2\theta -1}\Psi'(|\overline{x}-\overline{y}|)$
(respectively $-\tilde{C}\Psi'(|\overline{x}-\overline{y}|)|\overline{x}-\overline{y}|^{2\theta -1}$
with $\tilde{C}=\tilde{C}(N,\nu,|\sigma|_\infty, |\sigma|_{C^{0,\theta}})$).
On the other side,
the oscillation bound, Lemma~\ref{oscillation}, hold when
$N|x-y||\sigma_x|_\infty^2$ is replaced by $N|\sigma|_{C^{0,1/2}}^2$ in~\eqref{ssa4}.
The computations in the proofs of Theorems~\ref{uni_grad1} and ~\ref{uni_grad2}
are adapted accordingly.

The proofs of Theorems~\ref{uni_grad1} and ~\ref{uni_grad2} can be adapted easily to quasilinear equations
with diffusion  matrices of type $A(x,p)=\sigma(x,p)\sigma(x,p)^T$ under suitable growth structures in $x,p$ of $\sigma(x,p).$

As far as fully nonlinear equations of Bellman-Isaacs-type
\begin{eqnarray*}
\e v^\e +\mathop{\rm sup}_{a\in A} \mathop{\rm inf}_{b\in B}
\left\{ -{\rm trace}(A_{ab}(x) D^2v^\e)
+  H_{ab}(x, Dv^\e)\right\} =0,
\end{eqnarray*}
are concerned,
our results apply
provided that Assumptions~\eqref{sig-deg},~\eqref{ssa4}-\eqref{LN-ell1},~\eqref{clp}-\eqref{LN-ell2}
hold with constants independent of $a,b.$


\section{Comparison principle, existence and uniqueness for the stationary
equation~\eqref{approx-scal}}\label{NCP}

We prove the well-posedness of the stationary equation in a slightly
more general framework, namely, we work in an open bounded
subset of $\R^N$ instead of $\T^N$, assuming some Dirichlet boundary
conditions hold.

More precisely, we consider
\begin{eqnarray}
\label{Open_Ell}
&& \left\{
\begin{array}{l}
\displaystyle\e v^\e-{\rm trace}(A(x) D^2 v^\e)+H(x,D v^\e)=0, \quad x\in\Omega, 
\\[3mm]
v^\e (x)=g(x),  \qquad x\in \overline{\Omega}.
\end{array}
\right.
\end{eqnarray}
where $\Omega\subset\R^N$ is an open bounded set with $\partial \Omega \in C^{1,1}$,
$g\in C(\partial\Omega),$ $\e >0$ and we need to assume that
$H\in C(\overline{\Omega}\times \R^N;\R)$ to prove the comparison principle. 

The comparison principle follows easily from the ad-hoc inequality~\eqref{estim_global} which follows.
\begin{prop}\label{pri bound}
Assume~\eqref{sig-deg} and
either~\eqref{ssa4}-\eqref{LN-ell1} or~\eqref{clp}-\eqref{LN-ell2} hold,
where the torus $\T^N$ is replaced by $\Omega$. 
Let $u\in USC(\overline{\Omega})$ be a subsolution and 
$v\in LSC(\overline{\Omega})$ be a supersolution of~\eqref{Open_Ell} 
such that $u \le g \le v$ on $\partial\Omega$ and
\begin{eqnarray}\label{estim_bound}
d:=\sup_{x \in \overline{\Omega}}(u-v)>0.
\end{eqnarray}
Then, there exists a constant $C$ such that
\begin{eqnarray}\label{estim_global}
u(x)-v(y)\le d+C|x-y|~~\text{for all $x,y\in \overline{\Omega}$}.
\end{eqnarray}
\end{prop}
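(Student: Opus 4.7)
The plan is to mimic Step~2 of the proof of Theorem~\ref{uni_grad1} (or, under the superquadratic hypothesis, first Step~1 then Step~2 of Theorem~\ref{uni_grad2}), the only substantive difference being that $u$ and $v$ are now distinct semicontinuous sub- and supersolutions of the same equation, so that the role played by the a priori oscillation bound of Lemma~\ref{oscillation} is taken over by $d=\sup_{\overline\Omega}(u-v)<\infty$. Concretely, I introduce the smooth concave test function
\begin{eqnarray*}
\Psi(s)=A_1\bigl[A_2 s-(A_2 s)^{1+\gamma}\bigr]\quad\text{on }[0,r],\qquad \Psi(s)=\Psi(r)\text{ for }s\geq r,
\end{eqnarray*}
with $A_2 r=\tfrac13$, $\gamma\in(0,1)$ chosen exactly as in Section~\ref{Case 1} (resp.\ Section~\ref{Case 2}), and $A_1$ fixed so that $\Psi(r)$ dominates $d+\mathrm{osc}_{\partial\Omega}(g)+1$. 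Since $\Psi(s)\leq A_1A_2\,s$, establishing \eqref{estim_global} with $C=A_1A_2$ is equivalent to proving
\begin{eqnarray*}
M:=\max_{(x,y)\in\overline\Omega\times\overline\Omega}\bigl\{u(x)-v(y)-\Psi(|x-y|)\bigr\}\leq d.
\end{eqnarray*}
The maximum is attained, say at $(\overline x,\overline y)$, by semicontinuity of $u,-v$ and compactness of $\overline\Omega$.

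I then argue by contradiction, assuming $M>d$. First, the case $\overline x=\overline y$ gives $M=u(\overline x)-v(\overline x)\leq d$, a contradiction. Second, the boundary cases: if $\overline x\in\partial\Omega$, then $u(\overline x)\leq g(\overline x)\leq v(\overline x)$, so
\begin{eqnarray*}
M\leq v(\overline x)-v(\overline y)-\Psi(|\overline x-\overline y|);
\end{eqnarray*}
using the lower semicontinuity of $v$, continuity of $g$, $v\geq g$ on $\partial\Omega$, and the fact that $\Psi$ is linear-like of slope $A_1A_2$ near zero, one sees that choosing $A_1A_2$ large forces $|\overline x-\overline y|$ to be so small that the modulus of $g$ on a neighborhood of $\overline x$ is absorbed by $\Psi$, yielding $M\leq d$; a symmetric argument handles $\overline y\in\partial\Omega$. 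Hence $(\overline x,\overline y)\in\Omega\times\Omega$ with $\overline x\neq\overline y$ and $|\overline x-\overline y|<r$.

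Next, I apply Lemma~\ref{tech lemma} to the pair $(u,v)$ at $(\overline x,\overline y)$: Ishii's lemma produces matrices $X,Y$ satisfying \eqref{mat}--\eqref{mat-ter} and, combined with the viscosity subsolution inequality for $u$ and the supersolution inequality for $v$, the estimate \eqref{estimation-outil} becomes
\begin{eqnarray*}
-4\nu\,\Psi''(s)-\tilde C\,s\Psi'(s)+H(\overline x,\Psi'(s)q)-H(\overline y,\Psi'(s)q)+\e(u(\overline x)-v(\overline y))\leq 0,
\end{eqnarray*}
where $s=|\overline x-\overline y|$ and $q=(\overline x-\overline y)/s$. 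Dropping the nonnegative term $\e(u(\overline x)-v(\overline y))$ (which is positive since $M>d>0$), bounding the difference of Hamiltonians by \eqref{LN-ell1} (resp.\ \eqref{LN-ell2}), and invoking $s\Psi'(s)\leq\Psi(s)\leq d+\Psi(r)$ to substitute for the Hölder-type a priori bound used in Section~\ref{os gra}, I arrive at exactly the inequalities \eqref{hope1}--\eqref{hope2} (resp.\ \eqref{BLN1}--\eqref{BLN2}). Choosing $A_2$ sufficiently large exactly as there produces the contradiction, so $M\leq d$ and \eqref{estim_global} follows.

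The main obstacle is Step~3 above: the Ishii--Lions method is intrinsically interior, while at the boundary we only have the one-sided inequalities $u\leq g\leq v$ and mere semicontinuity of $u,v$. Removing the possibility that a maximizer $(\overline x,\overline y)$ lies on $\partial\Omega$ forces one to exploit the $C^{1,1}$-regularity of $\partial\Omega$ and the continuity of $g$ together with a slope $A_1A_2$ large enough to dominate the modulus of $g$ near $\overline x$; under the superquadratic hypothesis \eqref{clp}--\eqref{LN-ell2}, this is coupled with a preliminary Hölder-improvement step (the analogue of Step~1 in Section~\ref{Case 2}) carried out in exactly the same way, with $d$ in place of the oscillation bound.
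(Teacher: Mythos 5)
Your overall strategy—reproduce Steps~1 and~2 of Theorems~\ref{uni_grad1}/\ref{uni_grad2} with $u(x)-v(y)-d$ in place of $v^\e(x)-v^\e(y)$, using the boundary conditions to push the maximizer into the interior—is the correct one and is essentially what the paper does. However, there is a genuine gap at the heart of your Step~2, and a smaller one in the subquadratic case.

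The gap concerns the sentence ``invoking $s\Psi'(s)\leq\Psi(s)\leq d+\Psi(r)$ to substitute for the H\"older-type a priori bound.'' This bound is not strong enough. What the balancing argument in \eqref{hope1}, \eqref{R3}, and \eqref{BLN1} actually needs is the sharper estimate $s\Psi'(s)^{1/(1-\gamma)}\leq K_0^{1/(1-\gamma)}$ (i.e.\ \eqref{interm121} and \eqref{strange10}), which lets one write $s^\alpha\Psi'(s)^{\alpha+2}=\bigl(s\Psi'(s)^{1/(1-\gamma)}\bigr)^\alpha\Psi'(s)^{\,\alpha+2-\alpha/(1-\gamma)}$ and reduce to an exponent on $A_1A_2$ that is \emph{strictly less than} $2$. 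With only $s\Psi'(s)\leq d+\Psi(r)$, the best you can do is $s^\alpha\Psi'(s)^{\alpha+2}=(s\Psi'(s))^\alpha\Psi'(s)^2\leq C(A_1A_2)^2$, and the target $\nu A_1A_2^2\gamma(1+\gamma)\geq C(A_1A_2)^2$ fails for large $A_2$. In other words, the preliminary H\"older estimate cannot be dispensed with: it is precisely what turns the losing exponent $2$ into $\alpha+2-\alpha/(1-\gamma)<2$. The paper obtains the required input differently: under \eqref{clp}, $u$ is $\frac{k-2}{k-1}$-H\"older from~\cite{clp10}, and since $d\geq u(\overline y)-v(\overline y)$ one has $Ks^\chi<u(\overline x)-v(\overline y)-d\leq u(\overline x)-u(\overline y)\leq K_0 s^{\frac{k-2}{k-1}}$, which gives exactly the needed \eqref{strange2}. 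Your proposal contains no analogue of this chain. Relatedly, your plan for the subquadratic case skips Step~1 altogether; but even there the Lipschitz step uses the H\"older estimate \eqref{interm121}, so a preliminary H\"older-type estimate on $u(x)-v(y)-d$ (established as in Step~1 of Theorem~\ref{uni_grad1}, with the oscillation replaced by $\mathcal U=\|u\|_\infty+\|v\|_\infty$) must still be proved first.

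A secondary remark: your treatment of the boundary maximizers, via the modulus of continuity of $g$ and the slope $A_1A_2$, is vaguer than what is needed and invokes $g$ only implicitly. The paper's route is cleaner and closes the gap: by upper (resp.\ lower) semicontinuity of $u$ (resp.\ $v$), compactness of $\partial\Omega$, and $d>0$, there is $r>0$ with $u(x)-u(y)\leq d$ for $y\in\partial\Omega$, $|x-y|\leq r$ (and symmetrically for $v$); combined with $u\leq v$ on $\partial\Omega$ this gives $u(x)-v(y)\leq d$ whenever one of $x,y$ is on $\partial\Omega$ and $|x-y|\leq r$, hence $u(x)-v(y)\leq d+\tfrac{\mathcal U}{r}|x-y|$ whenever one of $x,y$ lies on $\partial\Omega$. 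Since $Ks^\chi<u(\overline x)-v(\overline y)-d\leq\mathcal U$ forces $s\to 0$ as $K\to\infty$, the maximizer must be interior for $K$ large, with no need to involve the modulus of $g$.
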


The proof of the proposition follows the same ideas of the proof of Theorem~\ref{uni_grad2}. 
We only sketch the minor changes between two proofs.

\begin{proof}
We make the proof under assumptions~\eqref{clp}-\eqref{LN-ell2},
the another case being simpler.
With the assumption $\partial \Omega \in C^{1,1}$ and~~\eqref{clp}, the result of~~\cite{clp10} gives
\begin{eqnarray}\label{clp210}
u \text{ is $\frac{k-2}{k-1}$-H\"older continuous in $\overline{\Omega}$ with a constant $K_0$},
\end{eqnarray}
where $k$ is given by the  assumption~~\eqref{clp}. 

Since $u,v$ are bounded, we can set
\begin{eqnarray}\label{borne}
\mathcal{U}=||u||_{\infty}+||v||_{\infty}.
\end{eqnarray} 

By the upper semi-continuity of $u$ and the compactness of $\partial\Omega,$ 
there exists $r>0$ such that
\begin{eqnarray*}
u(x)-u(y)\le d~~\text{for all $y\in \partial\Omega,x\in \Omega$ and $|x-y|\le r$},\\
v(x)-v(y)\le d~~\text{for all $x\in \partial\Omega,y\in \Omega$ and $|y-x|\le r$}.
\end{eqnarray*}
Hence, using $u\leq v$ on $\partial\Omega,$  there exists $r>0$ such that
\begin{eqnarray*}
u(x)-v(y)\le d~~\text{if $|y-x|\le r$ and either $x\in \partial\Omega$ or $y\in \partial\Omega$}.
\end{eqnarray*}
This implies that for $C=\frac{\mathcal{U}}{r},$ we have
\begin{eqnarray}\label{first estim}
u(x)-v(y)\le d+C|x-y|~~\text{if either $x\in \partial\Omega$ or $y\in \partial\Omega$}.
\end{eqnarray}

{\it Step 1. }Now, we prove that for any $\chi\in (\frac{k-2}{k-1},1)$, there exists a constant $K$ such that
\begin{eqnarray}\label{holder adhoc}
\max_{x,y \in \overline{\Omega}}\{ u(x)-v(y)-d-K|x-y|^{\chi}\} \le 0,
\end{eqnarray} 
where $K>0$ depends only on  $C,\alpha,\beta$ 
given by the hypothesis~\eqref{LN-ell2} and will be precised later.

We argue by contradiction
assuming that the maximum is positive for any $K>0.$ It is therefore achieved at 
$(\overline{x},\overline{y})$ 
with $\overline{x}\not= \overline{y}$. Denote $s:=|\overline{x}-\overline{y}|.$
We have
\begin{eqnarray}\label{ineg183}
Ks^\chi < u(\overline{x})-v(\overline{y})-d.
\end{eqnarray}
It follows from~\eqref{borne} that $s$ tends to zero as $K\to +\infty.$
Thanks to~\eqref{first estim}, we then infer that necessarily 
$\overline{x},\overline{y} \in \Omega$ for $K$ big enough.
Therefore, for $K$ big enough, we can write the viscosity inequalities for
$u$ at $\overline{x}$ and $v$ at $\overline{y}.$

From this point, the next arguments follow exactly the same ones of Part 1 and 2 
in the proof of Theorem~\ref{uni_grad2}. The only minor difference if the way we 
get~\eqref{strange2}. From~\eqref{ineg183} and~\eqref{clp210}, setting $\Psi(t)=Kt^\chi,$
we obtain
\begin{eqnarray*}
s\Psi'(s) \leq Ks^\chi <
u(\overline{x})-u(\overline{y})\le K_0 s^{\frac{k-2}{k-1}},
\end{eqnarray*}
hence 
\begin{eqnarray*}
s(\Psi'(s))^{ k-1} \le K_0^{ k-1}\text{ and }\frac{1}{s}\ge (\frac{K}{K_0})^{ \frac{1}{\chi-\frac{k-2}{k-1}}}.
\end{eqnarray*}
which is exactly the estimation~~\eqref{strange2} as desired. 

\noindent{\it Step 2.  Proof of~\eqref{estim_global}}. 
Consider the function $\Psi(s)=A_1 [A_2s-(A_2s)^{1+\gamma}]$ defined as in~\eqref{fct-conc-lip-hold}
with $r, A_1, A_2>0$ satisfying~\eqref{R1}.
Consider
\begin{eqnarray*}
\max_{x,y \in \overline{\Omega}}\{ u(x)-v(y)-d-\Psi(|x-y|)\}.
\end{eqnarray*}
If the maximum is negative,~\eqref{estim_global} holds with $C=A_1A_2.$
From now, we argue by contradiction
assuming that the maximum is positive and achieved at 
$(\overline{x},\overline{y}).$
With the choice of $r$ in~\eqref{R1}, we have
$0< s:= |\overline{x}-\overline{y}| < r.$
Using the same arguments as in the beginning of Step 1, up to take $A_2$
big enough, we can assume that $\overline{x},\overline{y} \in \Omega$
and therefore we can write the viscosity inequalities for
$u$ at $\overline{x}$ and $v$ at $\overline{y}.$

The next arguments follow exactly the same ones of Part 3 in the proof of Theorem~\ref{uni_grad2}. 
The only minor difference is the way we get~\eqref{strange10}.
Fix any $\chi \in (\frac{k-2}{k-1},1).$ From Step 1, we obtain
\begin{eqnarray*} 
d+\Psi(s) < u(\overline{x})-v(\overline{y}) 
\le d+Ks^\chi.
\end{eqnarray*}
We then have $s\Psi'(s) \le \Psi(s) < K s^\chi$, hence 
\begin{eqnarray*}
s(\Psi'(s))^{ \frac{1}{1-\chi}} \le K^{ \frac{1}{1-\chi}}.
\end{eqnarray*}
This is exactly Estimate~\eqref{strange10} as we want. Having on hands~\eqref{strange10}
we repeat readily the arguments of Part 3 in the proof of Theorem~\ref{uni_grad2}
to conclude.
\end{proof}

We now prove the comparison principle

\begin{thm}\label{CP}
Assume~\eqref{sig-deg}, $H$ is continuous and
either~\eqref{ssa4}-\eqref{LN-ell1} or~\eqref{clp}-\eqref{LN-ell2} hold,
where the torus $\T^N$ is replaced by $\Omega$. 
Let $u\in USC(\overline{\Omega})$ be a subsolution and 
$v\in LSC(\overline{\Omega})$ be a supersolution of~\eqref{Open_Ell} 
such that $u \le g \le v$ on $\partial\Omega.$
Then
\begin{eqnarray*}
u(x)\le v(x) \quad \text{for all $x\in \overline{\Omega}$}.
\end{eqnarray*}
\end{thm}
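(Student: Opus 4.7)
The plan is to deduce the comparison principle from the a priori estimate of Proposition~\ref{pri bound} via a standard doubling-of-variables argument. Assume for contradiction that $d:=\sup_{\overline{\Omega}}(u-v)>0$; this supremum is attained at some $x^\ast$ since $u-v$ is upper semicontinuous on the compact set $\overline{\Omega}$. Proposition~\ref{pri bound} then provides the global Lipschitz-type inequality
\[
u(x)-v(y)\le d+C|x-y|\quad\text{for all } x,y\in\overline{\Omega},
\]
which plays the role of an a priori gradient bound on $u-v$ and is the sole new ingredient needed to close the proof.

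For $\eps>0$ let $(x_\eps,y_\eps)\in\overline{\Omega}\times\overline{\Omega}$ maximize
\[
\Phi_\eps(x,y)=u(x)-v(y)-\frac{|x-y|^2}{2\eps}.
\]
Since $\Phi_\eps(x_\eps,y_\eps)\ge \Phi_\eps(x^\ast,x^\ast)=d$ while the previous estimate gives $\Phi_\eps(x_\eps,y_\eps)\le d+Cs-s^2/(2\eps)$ with $s:=|x_\eps-y_\eps|$, one obtains $s^2/(2\eps)\le Cs$, hence
\[
|x_\eps-y_\eps|\le 2C\eps,\qquad |p_\eps|:=\frac{|x_\eps-y_\eps|}{\eps}\le 2C,\qquad \frac{|x_\eps-y_\eps|^2}{\eps}\longrightarrow 0.
\]
A classical semicontinuity-plus-compactness argument combined with the boundary inequality $u\le g\le v$ on $\partial\Omega$ rules out limit points of $(x_\eps,y_\eps)$ on $\partial\Omega$ (any such $\bar x$ would give $d\le u(\bar x)-v(\bar x)\le 0$), so for $\eps$ small both $x_\eps,y_\eps$ lie in $\Omega$ and the viscosity inequalities can be written there.

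Applying Ishii's Lemma to $\Phi_\eps$ and subtracting the sub- and supersolution inequalities at $(x_\eps,y_\eps)$ produces matrices $X,Y$ and yields
\[
\e\bigl(u(x_\eps)-v(y_\eps)\bigr)-\mathrm{trace}\bigl(A(x_\eps)X-A(y_\eps)Y\bigr)+H(x_\eps,p_\eps)-H(y_\eps,p_\eps)\le 0.
\]
The trace term is controlled by the classical bound $C|\sigma(x_\eps)-\sigma(y_\eps)|^2/\eps\le C'|x_\eps-y_\eps|^2/\eps\to 0$, while the uniform bound $|p_\eps|\le 2C$ together with the continuity of $H$ on the compact set $\overline{\Omega}\times\{|p|\le 2C\}$ forces $H(x_\eps,p_\eps)-H(y_\eps,p_\eps)\to 0$. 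Combined with $u(x_\eps)-v(y_\eps)\ge d+s^2/(2\eps)\ge d$, passing to the limit leaves $\e d\le 0$, contradicting $d>0$.

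The main obstacle, and the reason the standard viscosity comparison proof fails when applied directly to~\eqref{Open_Ell}, is that $H$ is not assumed Lipschitz in $p$: the naive penalty gradient would a priori blow up like $\eps^{-1}|x_\eps-y_\eps|$ and the difference $H(x_\eps,p_\eps)-H(y_\eps,p_\eps)$ could not be controlled. Proposition~\ref{pri bound} is precisely the ingredient that converts this quantity into an $O(1)$ bound, after which mere continuity of $H$ is enough to conclude.
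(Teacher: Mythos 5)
Your proof is correct and follows essentially the same route as the paper's: both use Proposition~\ref{pri bound} to bound the penalization gradient $p_\eps$ uniformly in $\eps$, rule out boundary limit points via $u\le g\le v$ on $\partial\Omega$, and then pass to the limit in the subtracted viscosity inequalities using Ishii's lemma, the standard trace estimate, and mere uniform continuity of $H$ on compacts. The only cosmetic differences are the penalization parameter ($\eps$ vs.\ $\eta^2$) and whether the constant $d$ is subtracted inside the doubled functional.
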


Notice that we assume that the Dirichlet boundary conditions hold in the classical
viscosity sense on $\partial\Omega$. This is a little bit restrictive especially
when working with superquadratic Hamiltonians since it is known that
loss of boundary conditions may happen, see~\cite{bdl04} for instance. But it is enough
for our purpose here since we work in the periodic setting without boundary
condition.

\begin{proof}
The proof of this result is followed quite easily from the estimate~\eqref{estim_global}. 
Define $d$ as in~\eqref{estim_bound}. We assume that $d>0$ and try to get a contradiction. 
Since $u \le g \le v$ on $\partial\Omega$, any $z\in\overline{\Omega}$ such that
$d=u(z)-v(z)$ lies in $\Omega.$
The maximum
\begin{eqnarray*}
M_\eta=\max_{x,y \in \overline{\Omega}}\{u(x)-v(y)-\frac{|x-y|^2}{2 \eta^2}-d\} \ge 0
\end{eqnarray*}
is achieved
at $(x_\eta,y_\eta)\in \overline{\Omega} \times \overline{\Omega}$.
If there is a sequence $\eta\to 0$ such that $x_\eta,y_\eta \to \overline{x} \in \partial\Omega$,
then 
\begin{eqnarray*}
M_\eta=u(x_\eta)-v(y_\eta)-\frac{|x_\eta-y_\eta|^2}{2 \eta^2}-d \to u(\overline{x})-v(\overline{x})-d<0,
\end{eqnarray*}
which is a contradiction.

Therefore, $(x_\eta,y_\eta) \in \Omega\times \Omega$
for $\eta$ small enough. The theory of second order viscosity solutions yields, for every
$\varrho>0,$ the existence of 
$(p_\eta,X) \in \overline{J}^{2,+}u(x_\eta),(p_\eta,Y) \in \overline{J}^{2,-}v(y_\eta)$
such that
and the following viscosity inequalities hold 
\begin{eqnarray*}
&& \left\{
\begin{array}{ll}
\displaystyle \e u(x_\eta)
-{\rm trace}(A(x_\eta) X)+H(x_\eta,p_\eta)
\le 0,\\[2mm]
\displaystyle \e v(y_\eta)
-{\rm trace}(A(y_\eta)Y)+H(y_\eta,p_\eta) \ge 0.
\end{array}
\right.
\end{eqnarray*}

Thanks to Proposition~\ref{pri bound}, we have 
\begin{eqnarray*}
\frac{|x_\eta-y_\eta|^2}{2\eta^2}+d 
\leq u(x_\eta)-v(y_\eta)
\leq d+C|x_\eta-y_\eta|.
\end{eqnarray*}
Thus
\begin{eqnarray*}
\frac{p_\eta}{2}
= \frac{|x_\eta-y_\eta|}{2\eta^2}
\leq K.
\end{eqnarray*}
This implies that $p_\eta$ is bounded independently of $\eta$. 
Subtracting the viscosity inequalities and using~\eqref{estim-trace1},
we get $\e d \leq H(y_\eta,p_\eta)-H(x_\eta,p_\eta)+O(\eta )+O(\varrho),$
which leads to a contradiction when $\varrho\to 0,$ $\eta\to 0,$
thanks to the uniform continuity of $H$ on compact subsets.
\end{proof}

As a consequence of the previous results, we obtain the well-posedness 
for~\eqref{approx-scal} in the class of Lipschitz continuous functions.

\begin{cor}\label{statio-bien-posee}
Assume~\eqref{sig-deg}, $H\in C(\T^N\times\R^N;\R)$ and
either~\eqref{ssa4}-\eqref{LN-ell1} or~\eqref{clp}-\eqref{LN-ell2} hold.
Then, there exists a unique continuous viscosity solution $v^\e$
of~\eqref{approx-scal} which is Lipschitz continuous
with a constant independent of $\e.$
Moreover, if $A=\sigma\sigma^T$ and $H$ are $C^\infty,$ then
$v^\e$ is $C^\infty.$
\end{cor}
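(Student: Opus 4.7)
The plan is to establish uniqueness, existence, and smoothness in turn, using as inputs the uniform gradient bound of Theorems~\ref{uni_grad1}--\ref{uni_grad2} and a periodic version of the comparison principle Theorem~\ref{CP}.

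First, for uniqueness, I would transpose the proof of Theorem~\ref{CP} to the torus. The argument in fact simplifies because $\T^N$ has no boundary: in the doubling-of-variables penalization
\[
M_\eta=\max_{x,y\in\T^N}\Big\{u(x)-v(y)-\frac{|x-y|^2}{2\eta^2}-d\Big\}\ge 0,
\]
every maximizing pair $(x_\eta,y_\eta)$ is automatically ``interior,'' so the boundary step~\eqref{first estim} drops out. One re-runs Proposition~\ref{pri bound} in the periodic setting (the proof carries over verbatim, the H\"older estimate from~\cite{clp10} being purely interior and the oscillation bound of Lemma~\ref{oscillation} supplying the missing $L^\infty$ control under~\eqref{ssa4}) to obtain the key estimate~\eqref{estim_global}. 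From there, $p_\eta=(x_\eta-y_\eta)/\eta^2$ stays bounded, and subtracting the two viscosity inequalities together with the uniform continuity of $H$ on compact sets yields the contradiction $\e d\le 0$.

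Second, for existence, I would apply Perron's method. Since $H\in C(\T^N\times\R^N;\R)$, the constants $\pm\e^{-1}|H(\cdot,0)|_\infty$ are a sub- and a supersolution of~\eqref{approx-scal}, so the standard construction (see~\cite{cil92}) produces a (possibly discontinuous) viscosity solution whose upper and lower semicontinuous envelopes are, respectively, a sub- and a supersolution. The comparison principle just obtained forces these envelopes to agree, delivering a continuous viscosity solution $v^\e$. Theorem~\ref{uni_grad1} or~\ref{uni_grad2} applied to this $v^\e$ then gives the Lipschitz bound $|Dv^\e|_\infty\le K$ with $K$ independent of $\e$.

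Third, assuming now that $\sigma$ and $H$ are $C^\infty$, I would bootstrap the regularity. Since $v^\e$ is Lipschitz, freezing $p=Dv^\e(x)$ turns~\eqref{approx-scal} into a linear uniformly elliptic equation for $v^\e$ with bounded measurable source and bounded coefficients, so Krylov--Safonov / Caffarelli viscosity theory yields $v^\e\in C^{1,\alpha}$. Then $x\mapsto H(x,Dv^\e(x))\in C^{0,\alpha}$, and Schauder theory upgrades $v^\e$ to $C^{2,\alpha}$; a classical bootstrap using the $C^\infty$ smoothness of $A$ and $H$ iterates this to $C^\infty$. The step I expect to require most care is the first one: making sure that every boundary-oriented ingredient in the proofs of Proposition~\ref{pri bound} and Theorem~\ref{CP} (the bound~\eqref{first estim}, the use of $\partial\Omega\in C^{1,1}$, the H\"older estimate of~\cite{clp10}) admits a clean interior-only replacement on~$\T^N$.
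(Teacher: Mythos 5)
Your proposal matches the paper's proof in structure and content: adapt Theorem~\ref{CP} to the periodic setting (boundary considerations vacuous on $\T^N$), invoke Perron's method with the constants $\pm\e^{-1}|H(\cdot,0)|_\infty$ as barriers, apply Theorems~\ref{uni_grad1} or~\ref{uni_grad2} for the uniform Lipschitz bound, and bootstrap to $C^\infty$ via classical Schauder theory (the paper cites \cite[Theorems 6.13 and 6.14]{gt98}). Your version spells out a few intermediate steps (e.g.\ the $W^{2,p}\to C^{1,\alpha}\to C^{2,\alpha}$ ladder) that the paper leaves implicit, but the argument is essentially identical.
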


\begin{proof}
Thanks to the comparison principle, Theorem~\ref{CP}, we can construct
a unique continuous viscosity solution to~\eqref{approx-scal}
with Perron's method. To apply this method, it is enough to build
some sub and supersolution to~\eqref{approx-scal} which is easily
done by considering $v^\pm(x)=\pm \frac{1}{\e} |H(\cdot,0)|_\infty.$
The Lipschitz regularity of the solution is then obtained from 
Theorems~\ref{uni_grad1} and~\ref{uni_grad2}.
When $A$ and $H$ are $C^\alpha$ in $x$, the $C^{2,\alpha}$ regularity of $v^\e$
is a consequence of the Lipschitz bounds and the classical
elliptic regularity theory~\cite[Theorems 6.13 and 6.14]{gt98}.
\end{proof}

\section{Applications}\label{Applications}

\subsection{Ergodic problem}
As a first application of Theorems~\ref{uni_grad1} and~\ref{uni_grad2}, we prove
that we can solve the ergodic problem associated with~\eqref{approx-scal},
namely, there exist $v^0\in W^{1,\infty}(\T^N)$ and a unique constant $c\in\R$ 
solution to
\begin{eqnarray}\label{pb-ergod}
-{\rm trace}(A(x) D^2v^0)
+  H(x, Dv^0)=c, & x\in\T^N.
\end{eqnarray}

\begin{thm}\label{thm-erg}
Assume~\eqref{sig-deg}, $H\in C(\T^N\times\R^N;\R)$ and
either~\eqref{ssa4}-\eqref{LN-ell1} or~\eqref{clp}-\eqref{LN-ell2} hold.
Then, there exists $(c,v^0)\in \R\times W^{1,\infty}(\T^N)$ solution to~\eqref{pb-ergod}
and $c$ is unique. If we assume moreover that $H(x,\cdot)$ is locally Lipschitz
continuous then $v^0$ is unique up to additive constants.
\end{thm}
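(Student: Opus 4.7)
The plan is the standard vanishing discount approach, relying crucially on the $\e$-uniform Lipschitz bound already obtained.

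\emph{Step 1 (existence).} For each $\e>0$, Corollary~\ref{statio-bien-posee} yields a unique continuous solution $v^\e$ of \eqref{approx-scal} that is Lipschitz with a constant $K$ independent of $\e$. Fix $x_0\in\T^N$ and set $w^\e(x)=v^\e(x)-v^\e(x_0)$: the family $\{w^\e\}_{\e>0}$ is uniformly Lipschitz and uniformly bounded (by $\sqrt N\, K$), so by Arzel\`a--Ascoli one can extract a subsequence $\e_n\to 0$ along which $w^{\e_n}\to v^0$ uniformly on $\T^N$, with $v^0\in W^{1,\infty}(\T^N)$. The bound $|\e v^\e|_\infty\le |H(\cdot,0)|_\infty$, obtained by comparison with the constants $\pm|H(\cdot,0)|_\infty/\e$, lets us further extract so that $\e_n v^{\e_n}(x_0)\to -c$ for some $c\in\R$. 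Rewriting \eqref{approx-scal} in terms of $w^\e$,
\[
\e w^\e(x)+\e v^\e(x_0)-{\rm trace}(A(x)D^2w^\e)+H(x,Dw^\e)=0,
\]
and passing to the limit (noting that $\e_n w^{\e_n}\to 0$ uniformly), the standard stability of viscosity solutions gives that $(c,v^0)$ is a solution of \eqref{pb-ergod}.

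\emph{Step 2 (uniqueness of $c$).} Let $(c,v^0)$ be any solution of \eqref{pb-ergod}. For any $M\in\R$, the function $v^0+M$ satisfies
\[
\e(v^0+M)-{\rm trace}(A\,D^2(v^0+M))+H(x,D(v^0+M))=\e v^0+\e M+c.
\]
Choosing $M=-c/\e-\min v^0$ makes this nonnegative, producing a supersolution of \eqref{approx-scal}; choosing $M=-c/\e-\max v^0$ produces a subsolution. Applying the comparison principle (Theorem~\ref{CP}) to $v^\e$ against these barriers gives
\[
\e v^0-c-\e\max v^0 \;\le\; \e v^\e(x) \;\le\; \e v^0-c-\e\min v^0,\qquad x\in\T^N,
\]
so $\e v^\e\to -c$ uniformly on $\T^N$ as $\e\to 0$. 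Since $v^\e$ is uniquely determined by \eqref{approx-scal}, the limit $-c$ is intrinsic, hence $c$ is the unique ergodic constant.

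\emph{Step 3 (uniqueness of $v^0$ up to constants).} Suppose $v_1,v_2\in W^{1,\infty}(\T^N)$ both solve \eqref{pb-ergod} with the same constant $c$, and assume $H(x,\cdot)$ is locally Lipschitz. Since $|Dv_i|_\infty\le K$, the values of $H(x,Dv_i)$ only depend on the behavior of $H$ on the compact set $\T^N\times\overline{B(0,K)}$, where $H$ is Lipschitz in $p$ with some constant $L_K$. The difference $w=v_1-v_2$ then satisfies, in the viscosity sense,
\[
-{\rm trace}(A(x)D^2w)+b(x)\cdot Dw=0
\]
for some bounded measurable vector field $b$ coming from the linearization of $H(x,\cdot)$ between $Dv_1$ and $Dv_2$. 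Since $A$ is strictly elliptic by \eqref{sig-deg}, the strong maximum principle for uniformly elliptic linear operators applies on the compact connected manifold $\T^N$: any interior maximum of $w$ forces $w$ to be constant. The same argument applied to $-w$ gives the reverse inequality, so $v_1-v_2$ is constant. The main subtlety here, and where the local Lipschitz assumption is genuinely needed, is to make rigorous the linearization step in the viscosity framework; this can be done by doubling variables with a quadratic penalization and exploiting the uniform gradient bound to produce the bounded field $b$.
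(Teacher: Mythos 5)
Your Step~1 (Arzel\`a--Ascoli plus stability) is the same as the paper's, and your Step~3 (linearize in $p$ using the uniform gradient bound and the local Lipschitz regularity of $H$, then invoke the strong maximum principle on $\T^N$) is essentially the paper's argument as well: the paper formulates it slightly more carefully in the viscosity framework by saying $v_1-v_2$ is a viscosity \emph{subsolution} of $-{\rm trace}(A\,D^2 w)-C|Dw|\le 0$ and then quoting the strong maximum principle of Da Lio, rather than producing an exact linear equation with a measurable drift $b$; your own caveat at the end of Step~3 correctly identifies where the precision matters, and the paper's subsolution formulation is the standard way to make it rigorous.

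Step~2 is where you genuinely diverge from the paper, and your route is arguably cleaner. The paper proves uniqueness of $c$ by taking two ergodic solutions $(c_1,v_1)$, $(c_2,v_2)$, building the Lipschitz space-time functions $v_i(x)-c_it$ (shifted so they are ordered at $t=0$), applying a comparison principle for the \emph{evolution} equation restricted to Lipschitz solutions, and sending $t\to+\infty$. This relies on an evolution comparison that the paper describes as ``straightforward'' but does not really isolate as a lemma, and which is developed further only later in Section~4.2. You instead use $v^0$ to build explicit sub- and supersolutions of the \emph{stationary} $\e$-equation, invoke the stationary comparison already proved (Theorem~\ref{CP}, which holds a fortiori on $\T^N$ without boundary), and deduce $\e v^\e\to -c$ uniformly; uniqueness of $c$ then follows because $v^\e$ is unique. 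This only uses the stationary well-posedness established in Section~3, avoids any forward reference, and yields the convergence $\e v^\e\to -c$ as a bonus. Both proofs are correct; yours is more self-contained given the order of results in the paper.
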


\begin{proof}
Having on hands Theorems~\ref{uni_grad1} and~\ref{uni_grad2}, the result is
an easy application of the method of~\cite{lpv86} and the strong maximum principle.
We only give a sketch of proof.
Let $v^\e$ be the Lipschitz continuous solution of~\eqref{approx-scal}
given by Corollary~\ref{statio-bien-posee}. Since $|\e v^\e|\leq |H(\cdot, 0)|_\infty$
and $|Dv^\e|_\infty\leq K,$ the sequences
$\e v^\e$ and $v^\e-v^\e(0)$ are bounded and equicontinuous
in $C(\T^N)$ for all $\e>0.$ By Ascoli-Arzela Theorem, they converge, up to
subsequences to $-c\in\R$ and $v^0\in W^{1,\infty}(\T^N)$ respectively.
By stability, $(c,v^0)$ is a solution of~\eqref{pb-ergod}.
To prove the uniqueness part of the theorem, assume we have
two solutions $(c_1,v_1)$ and $(c_2,v_2)$ of~\eqref{pb-ergod}.
Then $\tilde{u}_1(x,t):=v_1(x)-c_1t-(|v_1|_\infty+|v_2|_\infty)$
and $\tilde{u}_2(x,t):=v_2(x)-c_2t$ are respectively subsolution
and supersolution of the associated evolution problem~\eqref{edp-evol}
with initial datas $\tilde{u}_1(x,0)\leq  \tilde{u}_2(x,0).$
Since both $\tilde{u}_1$ and $\tilde{u}_2$ are Lipschitz continuous,
we have a straightforward comparison principle for the
evolution problem which yields $\tilde{u}_1(x,t)\leq  \tilde{u}_2(x,t)$
for all $(x,t)\in\T^N\times [0,+\infty).$ Sending $t\to +\infty,$
we infer $c_1\geq c_2$ and exchanging the role of the two solutions,
we conclude $c_1=c_2.$ It is then easy to prove, using the Lipschitz continuity
of $v_1,v_2$ and $H$ with respect to the gradient that $v=v_1-v_2$ is a subsolution of 
$-{\rm trace}(A(x) D^2v)-C|Dv|\leq 0$ in $\T^N$
for some constant $C>0.$ By the strong maximum principle (\cite{dalio04}), $v_1-v_2$ 
is constant.
\end{proof}

\subsection{The parabolic equation}
\label{sec:parab}

In this section, we prove the well-posedness and 
time-independent gradient bounds
for the nonlinear parabolic problem~\eqref{edp-evol}
both under the assumptions~\eqref{ssa4}-\eqref{LN-ell1} 
and~\eqref{clp}-\eqref{LN-ell2}.

\begin{thm}\label{thm-evol-unif}
Assume~\eqref{sig-deg} and that $H\in C(\T^N\times\R^N;\R)$ satisfies
either~\eqref{ssa4}-\eqref{LN-ell1} or~\eqref{clp}-\eqref{LN-ell2}.
For any initial data $u_0\in C^2(\T^N)$, there exists a unique
continuous viscosity solution $u$ to~\eqref{edp-evol} such that,
for all $x,y\in\T^N,$ $s,t\in [0,+\infty),$
\begin{eqnarray}\label{bornes-lip-parab}
|u(x,t)-u(y,s)|\leq K|x-y|+\Lambda|t-s| \quad \text{with $K,\Lambda$ independent of time.}
\end{eqnarray}
If, in addition $A,$ $H$ and $u_0$ are $C^\infty,$ then $u\in C^\infty(\T^N\times [0,+\infty)).$ 
\end{thm}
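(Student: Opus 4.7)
The plan follows the outline given in the introduction: we cannot apply the stationary estimates of Theorems~\ref{uni_grad1}--\ref{uni_grad2} directly to the parabolic problem because (i) the stationary proofs rely on doubling the space variable for a \emph{solution} and use Ishii's lemma on both sides, so we first need to know that a continuous solution exists and enjoys some control on $\partial_t u$; and (ii) we have no direct comparison principle for $H$ since $H$ is not Lipschitz in $p.$ The fix is to regularize: let $(H_n)$ be a sequence of bounded, uniformly continuous functions converging locally uniformly to $H$ on $\T^N \times \R^N$ (obtained, for instance, by truncating $H$ in $p$ and mollifying), and set
\begin{eqnarray*}
H_{nq}(x,p)= H_n(x,p)+\frac{1}{q}|p|^M, \qquad M>2, \ q,n\geq 1.
\end{eqnarray*}
Since the added term depends only on $p$, the differences $H_{nq}(x,p)-H_{nq}(y,p)$ coincide with those of $H_n$, so $H_{nq}$ satisfies~\eqref{ssa4}--\eqref{LN-ell1} (resp.~\eqref{clp}--\eqref{LN-ell2}) with the same constants as $H$ for $n,q$ large. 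Moreover $H_{nq}$ is superlinear and uniformly continuous, which provides a standard comparison principle for the approximate parabolic equation; Perron's method then yields a unique continuous solution $u_{nq}.$

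The second step is to extract bounds uniform in $n,q,t$. Using Corollary~\ref{statio-bien-posee} and Theorem~\ref{thm-erg} applied to $H_{nq}$, we obtain an ergodic pair $(c_{nq},v_{nq}^0)$ with $|Dv_{nq}^0|_\infty$ and $|c_{nq}|$ bounded independently of $n,q$ (because the structural constants are unchanged). Since $u_0\in C^2(\T^N)$, the functions $v_{nq}^0(x)-c_{nq}t\pm C_0$ are, for $C_0$ large enough but uniform in $n,q$, sub- and supersolutions of the $(n,q)$-problem lying below/above $u_0$ at $t=0$ (the coercive term $\frac{1}{q}|Du_0|^M$ is uniformly bounded in $q$ because $Du_0$ is bounded). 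Comparison then yields a uniform $L^\infty$ bound and oscillation bound for $u_{nq}.$ A time-translation argument (comparing $u_{nq}(\cdot,\cdot+h)$ with $u_{nq}(\cdot,\cdot)\pm Ch$) gives a uniform bound on $|\partial_t u_{nq}|_\infty$ depending only on $|\partial_t u_{nq}(\cdot,0)|_\infty$, which is controlled via the $C^2$ data and the previous uniform bounds.

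The third step is to reproduce the stationary gradient estimates in the parabolic setting. With $u_{nq}$ a continuous solution enjoying a uniform bound on $\partial_t u_{nq}$, we may freeze time and apply the doubling
\begin{eqnarray*}
\max_{x,y\in\T^N,\, t\geq 0} \bigl\{u_{nq}(x,t)-u_{nq}(y,t)-\Psi(|x-y|)\bigr\};
\end{eqnarray*}
since $\Psi$ does not involve $t$, the time derivatives cancel at the interior maximum and we are reduced to an inequality of the form~\eqref{estimation-outil} with an extra bounded term coming from $\partial_t u_{nq}$, which is absorbed in the lower-order constants. The coercivity of $H_{nq}$ (inherited from $\frac{1}{q}|p|^M$ when~\eqref{clp} does not hold, or from~\eqref{clp} itself otherwise) together with the uniform oscillation bound provides the parabolic version of the H\"older estimate of~\cite{clp10} (this is the role of Lemma~\ref{clp-time}), so the two-step argument of Theorems~\ref{uni_grad1}--\ref{uni_grad2} transposes essentially verbatim and yields $|Du_{nq}(\cdot,t)|_\infty\leq K$ uniformly in $n,q,t$.

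Finally, the family $(u_{nq})$ is equi-Lipschitz in $(x,t)$, so by Ascoli a subsequence converges locally uniformly as $n,q\to +\infty$ to a Lipschitz function $u$ which, by the stability of viscosity solutions under locally uniform convergence of Hamiltonians on bounded sets of $p$, solves~\eqref{edp-evol} and satisfies~\eqref{bornes-lip-parab}. Uniqueness follows because $H$ is locally Lipschitz in $p$ on the range $|p|\leq K$ where both Lipschitz solutions live, yielding a standard parabolic comparison in that class; the $C^\infty$ regularity when the data are smooth follows from the Lipschitz bound and the linear parabolic Schauder theory. The delicate point, which is the main obstacle, is the third step: one must check that the parabolic analogue of~\eqref{estimation-outil} genuinely carries through with constants independent of $n,q,t$, and in particular that the coercive regularizer $\frac{1}{q}|p|^M$ neither enlarges the H\"older constant of~\cite{clp10} nor interferes with the delicate balance $(k-1)\alpha+k-\frac{\alpha}{1-\chi}<2$ chosen in the proof of Theorem~\ref{uni_grad2}. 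This is where the assumption $M>2$, together with the fact that $\frac{1}{q}|p|^M$ does not depend on $x$ and hence contributes zero to the right-hand side of~\eqref{LN-ell2}, is used in an essential way.
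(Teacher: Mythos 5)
Your overall architecture matches the paper's: truncate $H$ into a BUC $H_n$, add a coercive regularizer $\frac1q|p|^M$ to restore comparison and coercivity, use the ergodic pair to get $L^\infty$/oscillation/time-derivative bounds, invoke Lemma~\ref{clp-time} for the parabolic H\"older estimate, mimic the stationary gradient proofs, and pass to the limit by Ascoli and stability. But there is a genuine gap in how you handle the limits $n,q\to+\infty$ under assumptions~\eqref{clp}--\eqref{LN-ell2}, and the uniformity you assert along the way does not actually hold.

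The problem is in your second and third steps. You claim that the ergodic pair $(c_{nq},v^0_{nq})$ has $|Dv^0_{nq}|_\infty$ bounded independently of $n$ \emph{and} $q$, and later that $(u_{nq})$ is equi-Lipschitz in $n,q,t$, "because the structural constants are unchanged." That is true for~\eqref{LN-ell2} (the regularizer is $x$-independent, so it cancels in $H_{nq}(x,p)-H_{nq}(y,p)$), but it is false for~\eqref{clp}. Since $H_n$ is a bounded truncation of $H$, the only coercivity of $H_{nq}$ comes from $\frac1q|p|^M$, so the constant in~\eqref{clp} for $H_{nq}$ scales like $q$. Hence the $\frac{M-2}{M-1}$-H\"older constant furnished by Lemma~\ref{clp-time} (and by~\cite{clp10} in the stationary case) depends on $q$, and so does the Lipschitz constant coming out of the argument of Theorem~\ref{uni_grad2}, which needs that H\"older estimate as its starting point and whose choice of $\chi$ close to $1$ is calibrated against it. Your final Ascoli extraction therefore has no uniform Lipschitz (nor even uniform $\frac{k-2}{k-1}$-H\"older) control in the joint limit $(n,q)\to\infty$.

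The paper's proof is organized precisely to avoid this: in the case~\eqref{clp}--\eqref{LN-ell2}, it does \emph{not} take the two limits simultaneously. One first passes $n\to+\infty$ at fixed $q$, using only the $q$-dependent H\"older bound, to obtain a solution $u^q$ of the equation with Hamiltonian $\frac1q|p|^M+H(x,p)$. Only then does one exploit the inequality~\eqref{coerc-fine}, i.e.\ $\frac1q|p|^M+H(x,p)\geq \frac1C|p|^k-C$ with the \emph{original} $k$ and $C$ from~\eqref{clp} (the regularizer being nonnegative and hence harmless), to apply Lemma~\ref{clp-time} again and get a $\frac{k-2}{k-1}$-H\"older estimate uniform in $q$ and $t$; this is what makes the subsequent improvement to Lipschitz with constants independent of $q$ possible, and finally $q\to+\infty$ can be taken. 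In addition, to apply Lemma~\ref{clp-time} in the first place you must show that $u_{nq}$ is Lipschitz in time with a constant uniform in $n,q$; you indicate a time-translation argument but do not actually produce the needed in-time Lipschitz constant $\Lambda$, which the paper obtains from $u_0\in C^2$ and comparison with the explicit sub/supersolutions $u_0(x)\pm\Lambda t$. Finally, note that in the easier case~\eqref{ssa4}--\eqref{LN-ell1} the coercive regularizer is superfluous (truncation alone gives comparison, and the proof of Theorem~\ref{uni_grad1} does not require the~\cite{clp10} H\"older input), so the two cases really should be treated separately as the paper does; your unified $H_{nq}$ treatment is harmless there, but it is exactly in the other case that the joint-limit shortcut fails.
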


To prove the theorem, we adapt the proofs of Theorems~\ref{uni_grad1}
and~\ref{uni_grad2}. The proof under the set of 
assumptions~\eqref{clp}-\eqref{LN-ell2} 
is more delicate
since the proof of Theorem~\ref{uni_grad2} requires first
to construct a solution to~\eqref{edp-evol} which is
$\frac{k-2}{k-1}$-H\"older continuous. Due to the lack
of comparison principle for~\eqref{edp-evol} in our case
and since the  H\"older regualrity result of~\cite{clp10}
does not apply directly to evolution equations, the task is difficult.
We need to extend the result of~\cite{clp10} for 
subsolutions of~\eqref{edp-evol} which are Lipschitz continuous in
time (see Lemma~\ref{clp-time}) and to construct an approximate
solution of~\eqref{edp-evol} which is indeed Lipschitz continuous
in time.

\begin{proof}[Proof of Theorem \ref{thm-evol-unif}] \ \\[2mm]
\noindent{\it Step 1. Proof when~\eqref{ssa4}-\eqref{LN-ell1} hold.} 
We truncate the Hamiltonian $H$ by defining
\begin{eqnarray}\label{Hn}
H_n(x,p)= \left\{
\begin{array}{ll}
H(x,p) & x\in\T^N, |p|\leq n,\\
H(x,n\frac{p}{|p|}) &  x\in\T^N, |p|>n.
\end{array}
\right.
\end{eqnarray}
Notice that, on the one side, for $n\geq L,$ $H_n$ satisfies~\eqref{ssa4}.
On the other side, for all $n,$ $H_n$ satisfies~\eqref{LN-ell1} with
the same constant $C$ as for $H.$ Moreover $H_n$ converges locally
uniformly to $H$ as $n\to +\infty.$

By construction, 
$H_n\in BUC(\T^N\times\R^N;\R).$ It follows that the comparison
principle holds for~\eqref{edp-evol} where $H$ is replaced by $H_n.$
Since $H_n(x,Du_0(x))=H(x,Du_0(x)),$ for $n$ large enough,
\begin{eqnarray}\label{sub-sup-sol}
u^\pm (x,t)= u_0(x)\pm |H(\cdot,Du_0)-{\rm trace}(AD^2u_0)|_\infty t
\end{eqnarray}
are respectively super and subsolutions of~\eqref{edp-evol} with $H_n,$
and
Perron's method yields a unique continuous viscosity solution $u_n$
of this latter equation.

By Theorem~\ref{thm-erg}, there exists a
solution $(c_n,v_n)\in \R\times W^{1,\infty}(\T^N)$
of~\eqref{pb-ergod} where $H$ is replaced by $H_n.$ Notice that, since $H_n$
satisfies~\eqref{ssa4}-\eqref{LN-ell1} with constants independent of
$n$ for $n> L,$ both $|v_n|_\infty$ and $|Dv_n|_\infty$ are bounded
independently of $n.$ Choosing $\mathcal{A}$ independent of $n$ such that
$\mathcal{A}\geq |v_n|_\infty + |u_0|_\infty,$
the functions $(x,t)\mapsto v_n(x)-c_n t\pm \mathcal{A}$ are respectively
a viscosity super and subsolutions of~\eqref{edp-evol} with $H_n.$ 
By comparison with $u_n$ we get
\begin{eqnarray*}
v_n(x)-c_n t - \mathcal{A} \leq u_n(x,t)\leq v_n(x)-c_n t + \mathcal{A} 
\quad\text{for all $x\in\T^N,$ $t\in [0,+\infty).$}
\end{eqnarray*}
It follows that 
\begin{eqnarray*}
{\rm osc}(u_n(\cdot,t)) \leq |Dv_n|_\infty {\rm diam}(\T^N) + 2\mathcal{A} \leq C
\end{eqnarray*}
with $C$ independent of $n,t.$

It is now possible to mimic the proof of 
Theorem~\ref{uni_grad1} for $u_n.$

We begin by proving that $u_n$ is 
$\gamma$-H\"older continuous with a constant independent of $t,n$
for some $\gamma\in (0,1).$
For any $\eta >0,$ consider 
\begin{eqnarray*}
M_\eta:= \mathop{\rm max}_{x,y\in\T^N, t>0} \{u_n(x,t)-u_n(y,t)-\Psi(|x-y|)-\eta t\},
\end{eqnarray*}
where $\Psi(s)=Ks^\gamma,$ $0<\gamma <1.$ If the maximum
is nonpositive for some $K>1$ and all $\eta >0,$ then
we are done.
Otherwise, for all $K>1,$ there exists $\eta >0$ such that
the maximum is positive. It is achieved at some 
$(\overline{x},\overline{y},\overline{t})$ with $\overline{x}\not= \overline{y}.$

If $\overline{t}=0,$ then, using that $|\overline{x}- \overline{y}|\leq \sqrt{N},$
we have
\begin{eqnarray*}
M_\eta \leq u_0(\overline{x})-u_0(\overline{y})
-K|\overline{x}- \overline{y}|^\gamma
\leq C_0|\overline{x}- \overline{y}|-K|\overline{x}- \overline{y}|^\gamma
\leq 0
\end{eqnarray*}
for $K>C_0\sqrt{N}^{1-\gamma},$ where $C_0$ is the Lipschitz constant
of $u_0.$ 

It follows that, for $K$ big enough, the maximum is achieved at 
$\overline{t}>0$ and we can write the viscosity inequalities
for $u_n$
using the parabolic version of Ishii's Lemma~\cite[Theorem 8.3]{cil92}. 
Using Lemma~\ref{tech lemma} in
this context, we get
\begin{eqnarray*}
&&\eta -4\nu\Psi''(|\overline{x}-\overline{y}|)
-\tilde{C}\Psi'(|\overline{x}-\overline{y}|)|\overline{x}-\overline{y}|
+H(\overline{x},\Psi'(|\overline{x}-\overline{y}|)q)
-H(\overline{y},\Psi'(|\overline{x}-\overline{y}|)q) < 0.
\end{eqnarray*}
We then obtain a contradiction in the above inequality repeating
readily the proof of Step~1 of Theorem~\ref{uni_grad1} with 
$\mathcal{O}:=\mathop{\rm sup}_{t>0}{\rm osc}(u_n(\cdot,t)).$

With the same adaptations as above in this parabolic context,
we can reproduce the rest of the proof of Theorem~\ref{uni_grad1}.
We conclude that $u_n$ is Lipschitz continuous in space with a
constant independent of $t,n$ since we used~\eqref{ssa4}-\eqref{LN-ell1}
with constants independent of $n$ and since ${\rm osc}(u_n(\cdot,t))$ is
bounded independently of $t,n.$

By Ascoli-Arzela Theorem, up to extract subsequences, $u_n$
converge locally uniformly in $\T^N\times [0,+\infty)$
as $n\to +\infty$
to a function $u$ which is still Lipschitz continuous in space
with a constant independent of $t.$ By stability,
$u$ is a solution to~\eqref{edp-evol}.

The proof of the Lipschitz continuity of $u$ in time requires
$u_0$ to be $C^2$ and can be done exactly as 
in the second case below.
\medskip

\noindent{\it Step 2. Proof when~\eqref{clp}-\eqref{LN-ell2} hold.} 
We consider, for $q,n\geq 1,$ the approximate problem
\begin{eqnarray}\label{edp-app}
&&\left\{
\begin{array}{ll}
u_t-{\rm trace}(A(x) D^2u)
+ \frac{1}{q}|Du|^M+ H_n(x, Du)=0, & (x,t)\in\T^N\times (0,+\infty),\\
u(x,0)=u_0(x), &  x\in\T^N,
\end{array}
\right.
\end{eqnarray}
where $M>2$ 
and $H_n$ is defined in~\eqref{Hn}.

We have a comparison principle for~\eqref{edp-app} since $H_n\in BUC(\T^N\times\R^N)$
and $\frac{1}{q}|p|^M$ is a nonlinearity which is independent of $x$;
when subtracting the viscosity inequality, this term disappears since
we are in $\T^N$ and there is no need to add a localization term in the test-function to
achieve the maximum. Moreover, since~\eqref{sub-sup-sol} are
still super and subsolutions of~\eqref{edp-app}, by means of Perron's
method, we can build a continuous viscosity solution $u^{qn}$ of
the problem~\eqref{edp-app}.

The next lemma extends the result of~\cite{clp10} for USC subsolutions
of parabolic equations with coercive Hamiltonian satisfying~\eqref{clp}.
The proof is postponed at the end of the section.
\begin{lem}\label{clp-time}
Assume that~\eqref{clp} holds.
Let $U\in USC(\T^N\times [0,+\infty))$ be a subsolution of~\eqref{edp-evol}
which is bounded and Lipschitz continuous in time with constants independent of $t.$
Then, there exists $\tilde{C}>0$ which depends on $k,A,\Lambda$ (appearing in~\eqref{clp}
and~\eqref{lipent}) but not on $t$
such that
\begin{eqnarray*}
|U(x,t)-U(y,t)|\leq \tilde{C}|x-y|^{\frac{k-2}{k-1}}
\quad x\in\T^N, t\geq 0.
\end{eqnarray*}
\end{lem}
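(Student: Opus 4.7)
\bigskip

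The plan is to reduce the parabolic inequality to a time-parametrized family of elliptic inequalities and then quote the CLP estimate as a black box. More precisely, I will show that for every fixed $t_0>0$ the function $v(x):=U(x,t_0)$ is a bounded $USC$ viscosity subsolution of
\begin{equation*}
-{\rm trace}(A(x)D^2v)+H(x,Dv)\le \Lambda \quad\text{on }\T^N.
\end{equation*}
Granted this reduction, the H\"older regularity result of~\cite{clp10} for elliptic subsolutions under the coercivity assumption~\eqref{clp} applies immediately to $v$: its H\"older exponent is $\frac{k-2}{k-1}$ and its H\"older constant depends only on $N,k$, the ellipticity constants of $A$, the coercivity constant in~\eqref{clp} and the $L^\infty$ norm of the right-hand side $\Lambda$. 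Since none of these quantities involve $t_0$, the same constant $\tilde{C}$ works for every $t_0$, proving the lemma.

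To establish the elliptic subsolution property of $v$, I take $\psi\in C^2(\T^N)$ with $v-\psi$ having a strict local maximum at some $x_0\in\T^N$, and build the causal parabolic test function
\begin{equation*}
\phi_\varepsilon(x,t):=\psi(x)+(\Lambda +\varepsilon)(t_0-t), \qquad \varepsilon >0.
\end{equation*}
The time-Lipschitz bound $U(x,t)\le U(x,t_0)+\Lambda (t_0-t)$ valid for $t\le t_0$ gives
\begin{equation*}
U(x,t)-\phi_\varepsilon(x,t)\le U(x,t_0)-\psi(x)-\varepsilon (t_0-t),
\end{equation*}
so that $(x_0,t_0)$ is a strict local maximum of $U-\phi_\varepsilon$ on the backward parabolic cylinder $B(x_0,r)\times (t_0-r,t_0]$ for $r$ small. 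Invoking the standard one-sided (causal) characterization of parabolic viscosity subsolutions, which allows the subsolution inequality to be tested against smooth functions that merely realize the maximum from the past, see the parabolic definitions and remarks in~\cite{cil92}, I obtain
\begin{equation*}
-(\Lambda +\varepsilon)-{\rm trace}\bigl(A(x_0)D^2\psi(x_0)\bigr)+H\bigl(x_0,D\psi(x_0)\bigr)\le 0,
\end{equation*}
and letting $\varepsilon\to 0^+$ yields the desired elliptic inequality for $v$ with right-hand side $\Lambda$.

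The main obstacle in this scheme is precisely the reduction step, and specifically the necessity to use a one-sided parabolic test. A two-sided test function $\phi_\varepsilon$ of the same form cannot work: for $t>t_0$ the Lipschitz bound $U(x,t)\le U(x,t_0)+\Lambda (t-t_0)$ combined with the linear shape of $\phi_\varepsilon$ in $t$ produces an extra contribution of order $(2\Lambda +\varepsilon)(t-t_0)$ which no smooth quadratic correction near $t_0$ can absorb, so the maximum of $U-\phi_\varepsilon$ genuinely escapes the point $(x_0,t_0)$ in the forward time direction. This is where the causal nature of parabolic viscosity theory plays a decisive role. Once this subtle point is handled, the rest is routine: the application of~\cite{clp10} uses only the subsolution inequality and~\eqref{clp}, exactly as recalled in the first step of the proof of Theorem~\ref{uni_grad2}, and yields the uniform-in-$t$ H\"older estimate.
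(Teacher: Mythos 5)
Your proposal is correct and establishes exactly the same reduction as the paper, namely that for each fixed $t_0$, the slice $v(\cdot)=U(\cdot,t_0)$ is an elliptic viscosity subsolution of $-{\rm trace}(A\,D^2v)+H(x,Dv)\le\Lambda$, after which \cite[Theorem 2.7]{clp10} is invoked as a black box. But the way you obtain the elliptic inequality is genuinely different from the paper's. The paper doubles the time variable: with the spatial test function $\varphi$ fixed, it maximizes $U(x,s)-\varphi(x)-\frac{(t_0-s)^2}{\eta^2}$ over all $(x,s)$, uses the time-Lipschitz bound to deduce $|t_0-\overline{s}|/\eta^2\le\Lambda$ at the maximizer, writes the ordinary parabolic viscosity inequality there, and then lets $\eta\to 0$. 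This keeps everything inside the two-sided definition of parabolic subsolution, at the price of the quadratic penalization and a limiting argument. You instead build a test function linear in $t$ from the past, show $(x_0,t_0)$ is a maximum over the backward cylinder, and invoke the causal (one-sided in time) characterization of parabolic semi-jets (CIL, Section 8, definition of $\mathcal P^{2,+}$) to read off the subsolution inequality directly, then let $\varepsilon\to 0^+$. Your identification of why a two-sided test would fail — the maximum escapes forward in time — is correct and is exactly what the paper's quadratic penalization is designed to control. Each route buys something: yours is shorter and conceptually cleaner but requires the causal characterization as a prerequisite; the paper's is self-contained in the usual two-sided framework but needs the penalization and the $\eta\to 0$ limit. (A very minor remark applying to both arguments: they establish the estimate for $t_0>0$ only, and the case $t_0=0$ follows by Lipschitz continuity in time; and the paper's final constant should really be $2\Lambda$ because $\partial_s[(t_0-s)^2/\eta^2]=2(s-t_0)/\eta^2$, but as $2\Lambda$ is still $t$-independent this is immaterial.)
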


We are going to prove that $u^{qn}$ satisfies the assumptions of Lemma~\ref{clp-time}.
We first claim that there exists a constant $c^{qn}$ bounded with respect to $n$
such that $u^{qn}+c^{qn}t$ is bounded in $\T^N\times [0,+\infty)$ by a constant
depending on $q$ but not on $n.$
The equation
\begin{eqnarray}\label{app-statio}
\e v -{\rm trace}(A(x) D^2 v)
+ \frac{1}{q}|Dv|^M+ H_n(x, Dv)=0, 
\quad x\in\T^N,
\end{eqnarray}
satisfies Assumptions~\eqref{clp}-\eqref{LN-ell2}
of Theorem~\ref{uni_grad2} with $k=M$ and a constant $C$
depending on $q$ but not on $n.$ By Theorem~\ref{thm-erg},
there exists a
solution $(c^{qn},v^{qn})\in \R\times W^{1,\infty}(\T^N)$
of the associated ergodic problem.
By the maximum principle, $|\e v|\leq |H_n(\cdot , 0)|_\infty \leq 
|H(\cdot , 0)|_\infty$ so $c^{qn}$ is bounded independently of $q,n.$
Moreover, since the constants in the assumptions in Theorem~\ref{uni_grad2} may be
taken independent of $n,$ $v^{qn}$ is bounded and Lipschitz continuous
with constants independent on $n.$ 
Noticing that 
$\tilde{v}^{qn}(x,t)= v^{qn}(x)-c^{qn} t\pm A_q$ are respectively
viscosity super and subsolutions of~\eqref{edp-app}
when $A_q\geq |v^{qn}|_\infty + |u_0|_\infty$ ($A_q$ may be chosen independent
of $n$). By comparison with $u^{qn}$ we get
\begin{eqnarray*}
v^{qn}(x)-c^{qn} t - A_q \leq u^{qn}(x,t)\leq v^{qn}(x)-c^{qn} t + A_q 
\quad\text{for all $x\in\T^N,$ $t\in [0,+\infty)$}
\end{eqnarray*}
and the claim is proved.

We then claim that  $u^{qn}$ is Lipschitz continuous in time, i.e., there
exists $\Lambda>0$ independent of $t,q,n$ such that
\begin{eqnarray}\label{lipent}
|u^{qn}(x,t)-u^{qn}(x,s)|\leq \Lambda |x-y| \quad x\in\T^N, s,t\geq 0.
\end{eqnarray}
The proof is classical and relies on the comparison principle together
with the fact that $u_0\in C^2(\T^N).$ We only give a sketch of proof.
Since $A$ and the Hamiltonian in~\eqref{edp-app} do not depend on $t,$
for all $h>0,$ $u^{qn}(\cdot,\cdot+h)$ is solution to~\eqref{edp-app}
with initial data $u^{qn}(\cdot,h).$ By comparison, we obtain
\begin{eqnarray}\label{ineg391}
u^{qn}(x,t+h)-u^{qn}(x,t)\leq \mathop{\rm sup}_{y\in\T^N} (u^{qn}(y,h)-u_0(y))^+
\quad x\in\T^N, t\geq 0.
\end{eqnarray}
Setting
\begin{eqnarray*}
\Lambda:= |{\rm trace}(A D^2u_0)|_\infty + |Du_0|_\infty^M + |H(\cdot, 0)|_\infty
\end{eqnarray*}
(notice that $\Lambda$ does not depend neither on $q$ nor $n$),
we have that $u_0(x)\pm \Lambda t$ are respectively super and subsolutions
of~\eqref{edp-app}. By comparison, it follows
$|u^{qn}(x,t)-u_0(x)|\leq \Lambda t.$ Using this inequality in~\eqref{ineg391},
we obtain~\eqref{lipent}.

Therefore, we can apply Lemma~\ref{clp-time} to $U(x,t)= u^{qn}(x,t)+c^{qn}t$
which is Lipschitz continuous in time with a constant independent of $t,q,n$
since $c^{qn}$ is bounded independently on $q,n.$
We obtain that $u^{qn}(x,t)+c^{qn}t$ and so
$u^{qn}$ is $\frac{M-2}{M-1}$-H\"older
continuous in space with a constant depending on $q$ (but not on $n,t$).
By Ascoli-Arzela Theorem, $u^{qn}$ converges, up to subsequences, 
locally uniformly
in $\T^N\times [0,+\infty)$ as $n\to +\infty$ to
a function $u^{q}$ which still satisfies~\eqref{lipent}
(with the same constant $\Lambda$). Moreover, by stability,
$u^{q}$ is solution to~\eqref{edp-app} with $H_n$
replaced by $H.$

Arguing as above on~\eqref{app-statio} where $H_n$
is replaced by $H,$ we can construct a solution
$(c^q,v^q)$ to the ergodic problem associated
to~\eqref{app-statio} with $H.$ Using that
\begin{eqnarray}\label{coerc-fine}
\frac{1}{q}|p|^M + H(x,p)\geq \frac{1}{C}|p|^k -C
\end{eqnarray}
this time and that~\eqref{LN-ell2} holds
for datas independent of $q,$ we can prove
that $c^q$ is bounded and $v^q$ is bounded and Lipschitz continuous
with constants independent of $q.$
By comparison, $u^{q}+c^q t$ is bounded independently of $q,t.$

Applying again Lemma~\ref{clp-time} to $u^{q}+c^q t$
but using~\eqref{coerc-fine}, we obtain that $u^{q}$ 
is  $\frac{k-2}{k-1}$-H\"older
continuous with a constant independent of $q$ now.
Thanks again to Ascoli-Arzela Theorem, we can send $q\to +\infty$
to obtain, up to subsequences, a solution $u$ of~\eqref{edp-evol}
which is still  $\frac{k-2}{k-1}$-H\"older
continuous with a constant independent of $t.$

We are not in position to mimic the proof of Theorem~\ref{uni_grad2}
for this solution $u$, which is done easily adapting the proof in the
time-dependent case.

In conclusion, we built a Lipschitz continuous (in space and time)
solution to~\eqref{edp-evol} with constants independent of $t.$
\medskip

\noindent{\it Step 3. Uniqueness in  the class of continuous functions
and upper regularity.}
Even if a strong comparison principle between semicontinuous
viscosity sub and supersolutions does not necessarily hold
for~\eqref{edp-evol} under our assumptions,
it is easy to see that a comparison principle holds 
if either the subsolution
or the supersolution is Lipschitz continuous. It allows
to compare any continuous viscosity solution of~\eqref{edp-evol}
with $u.$ 

The regularity of $u$ when the data $u_0 \in C^{2,\alpha}$ and $H$ is $C^{\alpha}$ in $x$-variable is a consequence
of the Lipschitz bounds and the classical parabolic regularity theory, see~~\cite{krylov96} for instance.

The proof of the theorem is complete. 
\end{proof}

\begin{rem}
When $\sigma \in C^{0,1/2}(\T^N;\mathcal{M}_N)$
instead of $W^{1,\infty}(\T^N;\mathcal{M}_N),$
we need to regularize also $\sigma$ into
a Lipschitz continuous matrix to build a continuous
solution. The estimates on the approximate solutions
are not affected by this regularization thanks to
the results of Sections~\ref{os gra} and~\ref{NCP} and the result
of~\cite{clp10},  which
hold for $\sigma \in C^{0,1/2}(\T^N;\mathcal{M}_N).$
\end{rem}

\begin{proof}[Proof of Lemma~\ref{clp-time}]
To prove the lemma, it is sufficient to prove that
there exists $C>0$ such that, for
every $t>0,$ 
\begin{eqnarray}\label{ineg-en-x}
&&-{\rm trace}(A(x) D^2U(x,t))
+  H(x, DU(x,t))\leq C
\quad \text{for $x\in\T^N$ in the viscosity sense.}
\end{eqnarray}
Indeed, once~\eqref{ineg-en-x} is established, we can repeat
readily the proof of~\cite[Theorem 2.7]{clp10}.

Fix $t>0$ and suppose that $x_0\in\T^N$ is a strict
maximum point of $x\mapsto U(x,t)-\varphi(x)$ in $\T^N,$
where $\varphi\in C^2(\T^N).$ The supremum
\begin{eqnarray*}
\mathop{\rm sup}_{x,y\in\T^N, t\geq 0} \{ U(x,s)-\varphi(x)-\frac{(t-s)^2}{\eta^2}\}
\end{eqnarray*}
is achieved at $(\overline{x},\overline{s})$ and, since $U$ is bounded,
$\frac{(t-\overline{s})^2}{\eta^2}\to 0$ and $\overline{x}\to x_0$
as $\eta\to 0.$ Writing that $(\overline{x},\overline{s})$ is a maximum
point we have
\begin{eqnarray*}
U(\overline{x},t)-\varphi(\overline{x})
\leq U(\overline{x},\overline{s})-\varphi(\overline{x})- \frac{(t-\overline{s})^2}{\eta^2}.
\end{eqnarray*}
Using the Lipschitz continuity with respect to time of $U$ (let us say with constant 
$\Lambda$ independent of $t$), we obtain
\begin{eqnarray}\label{ineq836}
\frac{|t-\overline{s}|}{\eta^2}\leq \Lambda.
\end{eqnarray}
Since $U$ is a viscosity subsolution of~\eqref{edp-evol}, we get
\begin{eqnarray*}
\frac{\overline{s}-t}{\eta^2}
-{\rm trace}(A(\overline{x}) D^2\varphi(\overline{x}))
+  H(\overline{x}, D\varphi(\overline{x}))\leq 0.
\end{eqnarray*}
Taking into account~\eqref{ineq836} and letting $\eta\to 0,$
we infer
\begin{eqnarray*}
-{\rm trace}(A(x_0) D^2\varphi(x_0))
+  H(x_0, D\varphi(x_0))\leq \Lambda,
\end{eqnarray*}
which proves~\eqref{ineg-en-x}.
\end{proof}

We end this section with a general bound for the oscillation of
continuous solutions to~\eqref{edp-evol} when
the comparison result holds. It is the analogous
of Lemma~\ref{oscillation} in the parabolic setting and is a result
interesting by itself. We give below as an easy application
the convergence of $u(x,t)/t$ towards a constant.

\begin{lem}\label{para oscillation}
Suppose that comparison principle holds for~\eqref{edp-evol}.
Let $u_0\in C^2(\T^N)$ and assume that $H, A, u_0$ satisfy
\begin{eqnarray}\label{ssa-parab}
&&  \left\{\begin{array}{l}
\text{there exists $L>1$ such that
for all $x,y\in\T^N,$
if $|p|\!= \!L$, then }\\
\displaystyle  H(x,p) \ge|p|\left[H(y,\frac{p}{|p|})\!+\!
|H(\cdot ,Du_0)-{\rm trace}(A D^2u_0)|_\infty
\!+\! N|x-y||\sigma_x|_\infty^2\right].
\end{array}\right.
\end{eqnarray}
Then, the unique continuous solution $u$ of~\eqref{edp-evol} satisfies
\begin{eqnarray}\label{os para}
&& u(x,t)-u(y_t,t) \le L|x-y_t|, \quad\text{ for all $t\geq 0,$ $x \in \T^N$}
\end{eqnarray} 
and $y_t$ such that $\displaystyle u(y_t,t)=\min_{x \in \T^N}u(x,t)$.
\end{lem}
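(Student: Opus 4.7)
The result is the parabolic counterpart of Lemma~\ref{oscillation}. Assumption~\eqref{ssa-parab} is designed so that the stationary quantity $|H(\cdot,0)|_\infty$ from~\eqref{ssa4} is replaced by $C_0:=|H(\cdot,Du_0)-{\rm trace}(AD^2u_0)|_\infty,$ which is precisely the constant for which $u_0(x)\pm C_0 t$ serve as super- and subsolutions of~\eqref{edp-evol}. The assumed comparison principle then yields $|u(x,t)-u_0(x)|\le C_0 t,$ so the differences $u(x,t)-u(y,t)$ grow at most linearly in $t.$ Enlarging $L$ if necessary---which is compatible with~\eqref{ssa-parab} thanks to the superlinearity of $H$ it encodes---I may also assume $L\ge{\rm Lip}(u_0).$

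I will prove the stronger assertion $u(x,t)-u(y,t)\le L|x-y|$ for every $x,y\in\T^N,\,t\ge 0,$ from which~\eqref{os para} follows by specializing $y=y_t.$ Fix $T>0,\eta>0,\gamma>0,\alpha\in(0,1)$ and set $\Psi(r):=Lr-\gamma r^{1+\alpha},$ extended nondecreasingly and concavely on $[0,\sqrt N\,];$ consider on $\T^N\times\T^N\times[0,T)$
$$
\Phi(x,y,t):=u(x,t)-u(y,t)-\Psi(|x-y|)-\frac{\eta}{T-t},
$$
whose supremum is attained at some $(\bar x,\bar y,\bar t)$ with $\bar t<T$ thanks to the time-penalty. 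Arguing by contradiction that $\sup\Phi>0,$ one has $\bar x\ne\bar y$ and the choice $L\ge{\rm Lip}(u_0)$ excludes $\bar t=0.$ For $\bar t>0,$ I further double in time via an added penalty $(t-s)^2/\delta^2$ and apply the parabolic Ishii lemma together with the strictly elliptic estimate~\eqref{ineq-tracet} of Lemma~\ref{tech lemma}: writing $\bar s_\delta:=|\bar x_\delta-\bar y_\delta|,$ $q:=(\bar x_\delta-\bar y_\delta)/\bar s_\delta,$ $p:=\Psi'(\bar s_\delta)q,$ subtracting the sub- and supersolution inequalities at $(\bar x_\delta,\bar t_\delta)$ and $(\bar y_\delta,\bar s_\delta)$ produces
$$
\frac{\eta}{(T-\bar t_\delta)^2}+H(\bar x_\delta,p)-H(\bar y_\delta,p)\le 4\nu\Psi''(\bar s_\delta)+\tilde C\Psi'(\bar s_\delta)\bar s_\delta+O(\varrho),
$$
where the strictly negative $4\nu\Psi''(\bar s_\delta)<0$ provides the crucial ellipticity slack that is absent when $\Psi$ is linear.

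Inserting the lower bound from~\eqref{ssa-parab} (valid for $|p|=L,$ and near this value for $|p|=\Psi'(\bar s_\delta)$ by continuity of $H$) with $(x,y)=(\bar x_\delta,\bar y_\delta),$ then invoking~\eqref{ssa-parab} once more with $x=y=\bar y_\delta$ to bound the residual $LH(\bar y_\delta,q)+LC_0-H(\bar y_\delta,p)\le 0,$ the $LN|\bar x_\delta-\bar y_\delta||\sigma_x|_\infty^{2}$ contribution from the condition combines with the trace term's $\tilde C\Psi'(\bar s_\delta)\bar s_\delta,$ and after passing to the limit $\varrho,\delta\to 0,$ one reaches
$$
\frac{\eta}{(T-\bar t)^2}\le 4\nu\Psi''(\bar s)+\big(\text{bounded in }L,\,|\sigma_x|_\infty,\,|H|\text{ on compacts}\big).
$$
Taking $\gamma$ large enough that the strict concavity $\Psi''(\bar s)$ overwhelms the bounded term (this is possible because $\sup\Phi>0$ constrains $\Psi(\bar s)$ to lie below the bounded oscillation of $u(\cdot,\bar t),$ so $\bar s$ can be forced arbitrarily small as $\gamma$ grows) produces a contradiction with $\eta>0.$ Sending $\gamma\to 0,\eta\to 0,T\to+\infty$ delivers the bound. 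The main obstacle is the careful bookkeeping of the $|\sigma_x|_\infty^{2}$ cancellation together with the tuning of $\gamma$ to create strict slack against the non-strict inequality of~\eqref{ssa-parab} at $x=y,$ mirroring the delicate balance employed in Theorems~\ref{uni_grad1} and~\ref{uni_grad2}.
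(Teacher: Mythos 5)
There is a genuine gap, and it is structural. You double symmetrically, maximizing $u(x,t)-u(y,t)-\Psi(|x-y|)-\eta/(T-t)$, so Ishii's lemma hands you the \emph{same} gradient $p$ at $\bar x$ and $\bar y$ and the viscosity inequalities combine to leave you with $H(\bar x,p)-H(\bar y,p)$. But assumption~\eqref{ssa-parab} controls the combination $H(x,p)-|p|\,H(y,p/|p|)$, not $H(x,p)-H(y,p)$. Your attempt to bridge the two by invoking~\eqref{ssa-parab} at $x=y=\bar y$ produces exactly the wrong sign: that instance gives $L H(\bar y,q)+L\mathcal{A}-H(\bar y,p)\le 0$, so when you write
$$H(\bar x,p)-H(\bar y,p)\ \ge\ \Big(L H(\bar y,q)+L\mathcal{A}-H(\bar y,p)\Big)+LN|\bar x-\bar y|\,|\sigma_x|_\infty^2,$$
the bracketed residual is $\le 0$ with no lower bound available, and the inequality is vacuous. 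The paper circumvents this by maximizing the $L$-weighted quantity
$$u(x,t)-L\,u(y,t)+(L-1)\min_{z}u(z,t)-L|x-y|,$$
so that Ishii's lemma produces $(b/L,p/L,Y/L)\in\overline J^{2,-}u(y_n,t_n)$; multiplying that supersolution inequality by $L$ yields precisely $LH(y_n,p/L)$, which is the combination~\eqref{ssa-parab} controls. This $L$-weighting is not cosmetic — it is the mechanism that makes~\eqref{ssa-parab} usable, and no symmetric doubling can reproduce it. Relatedly, the stronger claim you announce, $u(x,t)-u(y,t)\le L|x-y|$ for \emph{all} $x,y$, is a full Lipschitz bound; that is not what~\eqref{ssa-parab} alone gives (it is the content of Theorems~\ref{uni_grad1} and~\ref{uni_grad2}, which need much more), and indeed the conclusion~\eqref{os para} is stated only relative to the minimum point $y_t$ — the $(L-1)\min u$ term in the paper's maximum encodes exactly that restriction.

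Two secondary problems. First, you appeal to the strictly elliptic trace estimate~\eqref{ineq-tracet}, but the lemma does not assume~\eqref{sig-deg}; the paper uses the degenerate-compatible estimate~\eqref{estim-trace1} and never needs a strict-concavity slack from $\Psi''$, so a linear penalization $L|x-y|$ suffices. Second, your logic "take $\gamma$ large enough to overwhelm the bounded term, then send $\gamma\to 0$" is internally inconsistent: the contradiction you need must hold for the small $\gamma$ regime in which $\Psi$ approximates $Lr$, not only for large $\gamma$. Finally, you do not address how to differentiate the term $\min_z u(z,t)$ (or in your formulation, how to handle the time derivative cleanly); the paper regularizes $\phi(t)=\min_z u(z,t)$ by smooth $\phi_n$ with Lipschitz constant bounded by $\mathcal{A}$, which is what produces the closing estimate $(L-1)\phi_n'(t_n)<L\mathcal{A}$.
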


Notice that~\eqref{ssa-parab} is a parabolic version of~\eqref{ssa4} 
which holds as soon as $H$ is superlinear.

\begin{rem}
Assuming that the comparison principle holds is a bit restrictive in this
context but we do not succeed to skip it.
\end{rem}

\begin{proof}[Proof of Lemma~\ref{para oscillation}]
Setting
\begin{eqnarray}\label{expli const}
\mathcal{A}:= |H(\cdot ,Du_0)-{\rm trace}(A D^2u_0)|_\infty,
\end{eqnarray}
we have that $u_0(x)\pm \mathcal{A}t$ are respectively super and subsolutions
of~\eqref{edp-evol}. By comparison, it follows
$|u(x,t)-u_0(x)|\leq \mathcal{A}t.$ 
By comparison again, we get
\begin{eqnarray}\label{lip in time}
|u(x,t+s)-u(x,t)|\leq \mathcal{A}s.
\end{eqnarray}

Fix $T>0$. We define 
\begin{eqnarray*}
M=\max_{x,y \in \T^N, t \in [0,T]}\{u(x,t)-L u(y,t)+(L-1)\min_{x \in \T^N}u(x,t)-L|x-y|\},
\end{eqnarray*}
where the constant $L$ is the one in~\eqref{ssa-parab}. 
If $M\leq 0$, then~\eqref{os para} is straightforward. 
Otherwise, $M \ge L\delta>0$ for $\delta >0$ enough small. 

Thanks to~\eqref{lip in time}, we can approximate $\phi (t):=\min_{x \in \T^N}u(x,t)$ from 
below over the compact interval $[0,T]$ by a sequence of smooth functions $\phi_n(t)$ 
whose lipschitz norm is bounded by $\mathcal{A}$ given by~\eqref{expli const}. 
Up to choosing $n$ big enough, we may assume $0 \le \phi-\phi_n \le \delta$. 
For $n \in \N$, we consider
\begin{eqnarray*}
M_n=\max_{x,y \in \T^N\!,\, t \in [0,T]}\{ u(x,t)-L u(y,t)+(L-1)\phi_n(t) -L|x-y|\}.
\end{eqnarray*}
It is clear that $M_n \ge \delta>0$.
The above positive maximum is achieved at 
$(x_n,y_n,t_n)$ with $x_n\not= y_n$. Unless $u(x_n,t_n)-L u(x_n,t_n)+(L-1)\phi_n(t_n)\ge \delta$, 
which is impossible since $\phi_n(t)\le \phi(t)=\min_{x \in \T^N}u(x,t).$ Moreover, by replacing $L$ with $\max\{L,||Du_0||_\infty\}$ if necessary, we can see easily that $t_n>0$.
The claim is proved and the maximum in $M_n$ is achieved at a differentiable
point of the test-function.

The theory of second order viscosity solutions \cite[Theorem 8.3]{cil92}
yields,  for every $\varrho>0,$ the existence of 
$(a,p,X) \in \overline{J}^{2,+}u(x_n,t_n)$ and
$(b/L,p/L,Y/L) \in \overline{J}^{2,-}u(y_n,t_n)$, with 
$p=L \frac{x_n-y_n}{|x_n-y_n|}$, $a-b=-(L-1)\phi'(t_n)$, 
such that
\begin{eqnarray*}
&& \left\{
\begin{array}{ll}
\displaystyle a
-{\rm trace}(A(x_n) X)+H(x_n,p)
\le 0,\\[3mm]
\displaystyle  \frac{b}{L}-{\rm trace}(A(y_n)\frac{Y}{L})+H(y_n,\frac{p}{L}) \ge 0.
\end{array}
\right.
\end{eqnarray*}
It follows
\begin{eqnarray*}
&& -(L-1)\phi_n'(t_n)  
-{\rm trace}(A(x_n) X -A(y_n)Y) +H(x_n,p)-L H(y_n,\frac{p}{L})
\leq 0.\nonumber
\end{eqnarray*}
Using Lemma~\ref{tech lemma}, we have
\begin{eqnarray*}
&&-{\rm trace}(A(x_n)X
-A(y_n)Y)\geq 
- LN|x_n-y_n| |\sigma_x|_\infty^2+O(\varrho)
\end{eqnarray*}
Finally, we obtain
\begin{eqnarray*}
H(x_n,p)
-L \left[H(y_n,\frac{p}{L}) +N|x_n-y_n| |\sigma_x|_\infty^2\right]+O(\varrho)
\leq (L-1)\phi_n'(t_n) < L\mathcal{A}.
\end{eqnarray*}
Letting $\varrho\to 0$ and applying~\eqref{ssa-parab} yields a contradiction.
\end{proof}

We end this section by an application of the oscillation bound.

\begin{prop}\label{u/t}
Assume~\eqref{ssa-parab} and
suppose that a comparison principle for~\eqref{edp-evol}
holds. For every $u_0\in C(\T^N),$
there exists $c\in\R$ such that the unique solution $u$
of~\eqref{edp-evol} satisfies
\begin{eqnarray*}
\lim_{t \to \infty}\frac{u(x,t)}{t}= -c~~~\text{uniformly with respect to $x \in \T^N$}.
\end{eqnarray*}
\end{prop}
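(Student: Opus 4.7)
The plan is to combine the uniform oscillation bound of Lemma~\ref{para oscillation} with a classical subadditive argument applied to the function $a(t):=\max_{x\in\T^N}\bigl(u(x,t)-u_0(x)\bigr)$, followed by an approximation step to cover merely continuous initial data.

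First I would assume $u_0\in C^2(\T^N)$ so that Lemma~\ref{para oscillation} applies. The key observation is that~\eqref{edp-evol} is autonomous and has no zeroth-order term, so for any fixed $s\ge 0$ the function $(x,t)\mapsto u(x,t)+a(s)$ is a continuous viscosity solution of~\eqref{edp-evol} starting from $u_0+a(s)\ge u(\cdot,s)$. Since $(x,t)\mapsto u(x,t+s)$ is a solution starting from $u(\cdot,s)$, the assumed comparison principle yields $u(x,t+s)\le u(x,t)+a(s)$, and taking the maximum in $x$ gives the subadditivity
\[
a(t+s)\le a(t)+a(s)\qquad \text{for all } s,t\ge 0.
\]

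Next I would combine subadditivity with the comparison of $u$ with the sub/supersolutions $u_0\pm\mathcal{A}t$, where $\mathcal{A}$ is the constant defined in~\eqref{expli const}: this gives $|u(x,t)-u_0(x)|\le \mathcal{A}t$, so $a(t)/t\in[-\mathcal{A},\mathcal{A}]$. Since $a$ is continuous and subadditive with $a(0)=0$, Fekete's lemma applies and yields $a(t)/t\to -c$ as $t\to+\infty$, with $-c:=\inf_{t>0}a(t)/t\in[-\mathcal{A},\mathcal{A}]$. I would then upgrade this to uniform convergence using Lemma~\ref{para oscillation}: the uniform-in-$t$ bound ${\rm osc}(u(\cdot,t))\le \sqrt{N}L$ implies
\[
|(u(x,t)-u_0(x))-a(t)|\le {\rm osc}(u(\cdot,t))+{\rm osc}(u_0)\le \sqrt{N}L+{\rm osc}(u_0),
\]
for all $x\in\T^N$, so dividing by $t$ gives $(u(x,t)-u_0(x))/t\to -c$ uniformly, hence $u(x,t)/t\to -c$ uniformly.

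Finally, for general $u_0\in C(\T^N)$, I would approximate $u_0$ by $u_0^n\in C^2(\T^N)$ with $|u_0^n-u_0|_\infty\le 1/n$, denoting by $u^n$ the corresponding solution. By the assumed comparison principle, $|u^n(x,t)-u(x,t)|\le 1/n$ for all $(x,t)$. The previous argument, applied to each $u^n$ (with an $n$-dependent $L_n$ and $\mathcal{A}_n$), yields $u^n(x,t)/t\to -c_n$ uniformly in $x$; since $|u^n(x,t)/t-u(x,t)/t|\le 1/(nt)\to 0$ as $t\to+\infty$, the ratio $u(x,t)/t$ must converge to the same limit $-c_n$ for every $n$, forcing $c_n=c$ to be independent of $n$.

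The main obstacle I expect is in the approximation step, where one must check that each $u_0^n\in C^2(\T^N)$ still fulfils~\eqref{ssa-parab}: this is not automatic because $L$ depends on $|H(\cdot,Du_0^n)-{\rm trace}(AD^2u_0^n)|_\infty$, which may blow up with $n$. However, only the uniform-in-$t$ (not in $n$) character of the oscillation bound is needed, so one may freely allow $L_n\to+\infty$; the limiting constant $c$ is nevertheless common to all $n$ thanks to the $1/n$-closeness of $u^n$ to $u$.
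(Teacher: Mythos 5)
Your proof follows the same overall strategy as the paper's sketch: establish subadditivity, invoke Fekete's lemma, and then upgrade to uniform convergence via the oscillation bound of Lemma~\ref{para oscillation}; the passage from $u_0\in C^2$ to $u_0\in C(\T^N)$ is also handled identically, by $1/n$-approximation and comparison. The one genuine difference is the choice of auxiliary quantity: the paper works with $m(t)=\min_{\T^N}u(\cdot,t)$ and asserts its subadditivity, whereas you work with $a(t)=\max_x\bigl(u(x,t)-u_0(x)\bigr)$. Your choice is actually the more careful one. Since~\eqref{edp-evol} is autonomous and has no zeroth-order term, adding a constant and shifting time both preserve solutions, and $u_0+a(s)\ge u(\cdot,s)$ pointwise, which is exactly what the comparison argument needs to yield $a(t+s)\le a(t)+a(s)$; by contrast $m(t)=\min u(\cdot,t)$ is not subadditive in general (e.g., if $H(\cdot,0)\equiv 0$ and $u_0\equiv c<0$, then $u\equiv c$ and $m(t+s)=c>2c=m(t)+m(s)$). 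Normalizing by $u_0$ as you did thus repairs a small imprecision in the paper's sketch while keeping the oscillation bound doing the same job of converting the convergence of $a(t)/t$ into uniform convergence of $u(\cdot,t)/t$. Your remark that only uniformity in $t$, not in $n$, is needed for~\eqref{ssa-parab} in the approximation step correctly identifies and resolves the only subtlety there.
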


For related results in the case of Bellman equations, see~\cite{al98, ab10}.

\begin{proof}[Sketch of proof of Proposition~\ref{u/t}]
Without loss of generality, we assume that $u_0\in C^2(\T^N)$. The general case where  $u_0\in C(\T^N)$ can be handled using an approximation of $u_0$ in the class of $C^2$ functions and the comparison principle.

Set $m(t)=\mathop{\rm min}_{\T^N} u(\cdot,t).$
Since $(x,t)\mapsto u_0(x)-\mathcal{A}t,$ where $\mathcal{A}$
is given by~\eqref{expli const}, is a subsolution of~\eqref{edp-evol}, we have
$m(t)\geq -C(1+t).$ Moreover, an easy application of the
comparison principle yields that $m$ is subadditive, namely
$m(t+s)\leq m(t)+m(s)$ for all $t,s\geq 0.$
By the subadditive theorem, there exists $c\in\R$
such that $m(t)/t\to -c$ as $t\to +\infty.$
By Lemma~\ref{para oscillation}, $0\leq u(x,t)-m(t)\leq L\,{\rm diam}(\T^N).$
This implies the uniform convergence of $u(\cdot,t)/t$ to $-c.$
\end{proof}

\subsection{Large time behavior of solutions of nonlinear strictly
parabolic equations}\label{ep ltb}
In this section, we use the uniform gradient bound proved in Theorems~\ref{uni_grad1} and~\ref{uni_grad2} 
to study the large time behavior of the solution of~\eqref{edp-evol}.

The first results on the large time behavior of solutions for second order parabolic equaions were established in
Barles-Souganidis~\cite{bs01}. They prove the uniform gradient bounds~\eqref{grad-intro}
and~\eqref{gradt-intro} for~\eqref{approx-scal} and~\eqref{edp-evol} in two cases. 
The first one is
for Hamiltonians with a sublinear growth 
with respect to the gradient. A typical example is
\begin{eqnarray}\label{sublin}
H(x,p)=\langle b(x),p\rangle+\ell(x), \quad \text{$b\in C(\T^N;\R^N),$ $\ell \in C(\T^N).$}
\end{eqnarray}
The second case is
for {\em superlinear Hamiltonians}. The precise assumptions (\cite[(H2)]{bs01}) are more involved
and require both local Lipschitz regularity properties and convexity-type assumptions
on $H.$ These assumptions are designed to allow the use of weak Bernstein-type
arguments (\cite{barles91a}). The typical example is
with a superlinear growth with respect to the gradient
\begin{eqnarray}\label{typic-super-intro}
H(x,p)=a(x)|p|^{1+\alpha}+\ell(x), \quad \text{$\alpha >0,$ $a,\ell \in W^{1,\infty}(\T^N)$ and $a>0.$}
\end{eqnarray}
The proof of the large time behavior of the solution of~\eqref{edp-evol}
is then a consequence of the strong maximum principle
(we give a sketch of proof below).

On the one hand, our resuts generalizes the assumptions on sublinear Hamiltonians made in ~\cite{bs01}. More importantly, our results allow to deal with a class of superlinear Hamiltonians which is very different
with the superlinear case of~\cite{bs01}. 

\begin{thm}(Large time behavior)\label{LTB}
Assume that either the assumptions of Theorem~\ref{uni_grad1} or the assumptions of Theorem~\ref{uni_grad2} hold. 
Moreover, suppose that $H$ is continuous and locally lipschitz with respect to $p$. 
Then, there exists a unique $c\in\R$ such that, for all $u_0\in C(\T^N),$ 
the solution $u$ of~\eqref{edp-evol} satisfies
\begin{eqnarray}\label{asympt-form} 
u(x,t)+ct\to v^0(x) \quad uniformly \ as \ t\to +\infty,
\end{eqnarray}
where $(c,v^0)$ is a solution of~\eqref{pb-ergod}.
\end{thm}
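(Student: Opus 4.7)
Theorem~\ref{thm-erg} supplies the ergodic pair $(c,v^0)\in\R\times W^{1,\infty}(\T^N)$, with $c$ unique and $v^0$ unique up to additive constants since $H(x,\cdot)$ is locally Lipschitz. For $u_0\in C^2(\T^N)$, Theorem~\ref{thm-evol-unif} provides the unique solution $u$ of~\eqref{edp-evol} which is uniformly Lipschitz in $(x,t)$. Observing that $(x,t)\mapsto v^0(x)-ct+C$ is, for any $C\in\R$, a solution of~\eqref{edp-evol}, the comparison principle between Lipschitz sub/supersolutions (established in the proof of Theorem~\ref{thm-evol-unif}) shows that
\begin{eqnarray*}
M(t):=\max_{\T^N}(u(\cdot,t)+ct-v^0),\qquad m(t):=\min_{\T^N}(u(\cdot,t)+ct-v^0)
\end{eqnarray*}
are respectively nonincreasing and nondecreasing. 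They therefore converge to some $M_\infty\ge m_\infty$ as $t\to+\infty$. The target~\eqref{asympt-form}, up to replacing $v^0$ by $v^0+M_\infty$, reduces to showing $M_\infty=m_\infty$, since then $\mathrm{osc}(u(\cdot,t)+ct-v^0)\to 0$ while $M(t)\to M_\infty$ forces uniform convergence to $v^0+M_\infty$.

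To obtain $M_\infty=m_\infty$, set $w:=u+ct$ and, for any sequence $t_n\to+\infty$, use the uniform $(x,t)$-Lipschitz bounds on $w$ from Theorem~\ref{thm-evol-unif} together with Ascoli--Arzela to pass, along a subsequence, to the limit $w(\cdot,t_n+\cdot)\to U^*$ locally uniformly on $\T^N\times\R$. By stability, $U^*$ is a Lipschitz viscosity solution of $U^*_t-\mathrm{trace}(A(x)D^2U^*)+H(x,DU^*)=c$ on all of $\T^N\times\R$, and by construction $\max_{\T^N}(U^*(\cdot,s)-v^0)\equiv M_\infty$, $\min_{\T^N}(U^*(\cdot,s)-v^0)\equiv m_\infty$ for every $s\in\R$. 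Since both $U^*$ and $v^0$ are $K$-Lipschitz in $x$ and $H(x,\cdot)$ is locally Lipschitz on $\{|p|\le K\}$, a doubling-of-variables argument shows that $z:=U^*-v^0$ is a viscosity subsolution on $\T^N\times\R$ of a linear inequality of the form $z_t-\mathrm{trace}(A(x)D^2z)-C|Dz|\le 0$. As $z$ attains its maximum $M_\infty$ at every time slice of the connected product $\T^N\times\R$, the parabolic strong maximum principle (in the spirit of~\cite{dalio04}) forces $z\equiv M_\infty$ on $\T^N\times\R$, hence $m_\infty=M_\infty$.

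The main obstacle is precisely this last step: one must carry out the doubling-of-variables / Ishii--Lions type argument that converts the pair of parabolic viscosity solutions $(U^*,v^0)$ into the linearized subsolution inequality satisfied by their difference on the bounded gradient range $\{|p|\le K\}$, and then invoke the appropriate parabolic version of the strong maximum principle on the unbounded connected cylinder $\T^N\times\R$. Once $M_\infty=m_\infty$ is secured, the conclusion~\eqref{asympt-form} for $u_0\in C^2$ follows; extension to general $u_0\in C(\T^N)$ is then routine, via $C^2$-approximation of the initial datum combined with the $L^\infty$-contraction estimate $|u^1(\cdot,t)-u^2(\cdot,t)|_\infty\le|u_0^1-u_0^2|_\infty$ inherited from the comparison principle.
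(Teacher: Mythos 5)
Your proposal is correct and follows essentially the same path as the paper's proof: monotonicity of the extremal deviation from $v^0$ (via the comparison principle), relative compactness of the translated trajectory in $W^{1,\infty}$ (from Theorem~\ref{thm-evol-unif}), a stability passage to a limit solution, and the parabolic strong maximum principle applied to the difference with $v^0$, which becomes a subsolution of a linearized inequality thanks to the uniform Lipschitz bounds and the local Lipschitz continuity of $H$ in $p$. The one organizational difference is minor and worth noting: the paper tracks only the nonincreasing quantity $\max_{\T^N}(u(\cdot,t)+ct-v^0)$ and builds the limit profile $u_\infty$ on $\T^N\times[0,+\infty)$ through a Cauchy-sequence argument using the comparison principle, whereas you track both $M(t)$ and $m(t)$, pass to an eternal subsequential limit $U^*$ on $\T^N\times\R$ by Ascoli--Arzel\`a, and conclude from the strong maximum principle that $M_\infty=m_\infty$, so that monotonicity plus vanishing oscillation forces uniform convergence of the full trajectory without needing the Cauchy step. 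Both routes hinge on the same three ingredients and your version is a clean, slightly leaner reorganization of the same proof.
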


\begin{proof}[Sketch of proof of Theorem \ref{LTB}] 
First of all, it is enough to assume that $u_0\in C^2(\T^N)$. The general case where  $u_0\in C(\T^N)$ can be handled using an approximation of $u_0$ in the class of $C^2$ functions and the comparison principle.

Set $m(t)=\max_{x\in\T^N}(u(x,t)+ct-v^0(x)).$ By the comparison principle, $m$ is 
nonincreasing and, since it is bounded from below, $m(t)\to \ell \text{ as } t \to \infty.$
From Theorem~\ref{thm-evol-unif}, 
$\{u(\cdot,t)+ct, t>0\}$ is relatively compact in $W^{1,\infty}(\T^N).$
So we can extract a sequence,
$t_j\to +\infty$ such that $u(\cdot,t_j)+ct_j\to  \bar{u}\in W^{1,\infty}(\T^N).$ 
Applying the comparison principle for~\eqref{edp-evol} 
in $W^{1,\infty}(\T^N\times [0,+\infty)),$ we obtain,
for every  $x\in\T^N,$ $t\geq 0,$ $p\in\N,$
\begin{eqnarray*}
&& |u(x,t+t_j)+ct_j-u(x,t+t_p)-ct_p)|\leq 
\max_{y\in\T^N} |u(y,t_j)+ct_j-u(y,t_p)-ct_p|,
\end{eqnarray*}
which proves that $(u(\cdot ,\cdot+t_j)+c(\cdot+t_j))_j$ is a Cauchy
sequence in $C(\T^N\times [0,+\infty)).$ We call $u_\infty$
its limit. Notice, on  one hand, that  
$|Du_{\infty}(\cdot,t)|_\infty\leq K$ for all $t$
and, on the other hand, that $u_\infty -ct$ is solution of~\eqref{edp-evol}
with initial data $\bar{u}$ by stability.

Passing to the limit with respect to $j$ in $m(t+t_j)$ 
we obtain 
\begin{eqnarray}\label{maxxt}
\ell=\max_{x}(u_{\infty}(x,t)-v^0(x))\quad \text{for any $t>0$.}
\end{eqnarray}

Since  $u_{\infty}$ is solution of~\eqref{edp-evol} ith $c$ in the
right-hand side
and $v^0$ is solution of~\eqref{pb-ergod}, 
thanks to the Lipschitz continuity of $u_\infty, v^0$ with respect to $x$ and 
$H$ with respect to the gradient, we obtain that there exists $C>0$
such that
$w=u_\infty -v^0$ is subsolution of
$w_t-{\rm trace}(A(x) D^2w)-C|Dw|\leq 0$ in $\T^N\times [0,+\infty).$
Using~\eqref{maxxt} and
the strong maximum principle (\cite{dalio04}), we infer 
$u_{\infty}(x,t)-v^0(x)=\ell$ for every
$(x,t) \in \T^N \times [0,+\infty).$
Noticing that $\ell+v^0(x)$ does not depend on the choice 
of subsequences, we obtain
$u(x,t) +ct-\ell-v^0(x)\to 0$ uniformly in $x$ as $t \to \infty$.
\end{proof}

\subsection{Existence result of H\"older continuous solutions for 
equations without comparison principle}\label{vanishing idea}
Usually, existence results for Equations like~\eqref{approx-scal} or~\eqref{edp-evol}
are consequence of a strong comparison principle as Theorem~\ref{CP} together with Perron's method
or using the value function of an optimal control problem when $H$ is
convex. In this section, we use Theorem~\ref{CP} and the result of~\cite{clp10} to 
build H\"older continuous solutions under assumptions which are too weak to
expect any comparison principle.

\begin{thm}\label{existence}
Assume~$A\ge 0$, $H$ is continuous and satisfies
\begin{eqnarray}
\label{poly H}
\frac{|p|^m}{C}-C \le H(x,p)\le C(|p|^M+1), \quad x\in\T^N, p \in \R^N, 2<m\leq M.
\end{eqnarray}
Then there exists a viscosity solution $v^\e$ of~\eqref{approx-scal}
which is $\frac{m-2}{m-1}$-H\"older continuous solution 
and, for every $u_0\in C^2(\T^N),$ 
a viscosity solution $u$ of~\eqref{edp-evol} which is
$\frac{m-2}{m-1}$-H\"older continuous in space and Lipschitz continuous in $t.$
\end{thm}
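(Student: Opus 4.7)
The strategy is a double approximation: regularize both the diffusion (to make it strictly elliptic) and the Hamiltonian (to ensure the assumptions of Theorem~\ref{uni_grad2} or Theorem~\ref{thm-evol-unif} are satisfied), then extract $q$-uniform H\"older estimates and pass to the limit using the stability of viscosity solutions.

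For a parameter $q\geq 1$ and a fixed $\delta>0$, I would set
\begin{equation*}
A_q(x) = A(x) + \tfrac{1}{q}\,I, \qquad H_q(x,p) = H(x,p) + \tfrac{1}{q}|p|^{M+\delta}.
\end{equation*}
Then $A_q$ is strictly elliptic in the sense of~\eqref{sig-deg} with $\nu=1/q$ and can be written as $A_q=\sigma_q\sigma_q^T$ with $\sigma_q$ Lipschitz. Following the example discussed right after Theorem~\ref{uni_grad2} with $K:=H$, the upper bound $H(x,p)\leq C(|p|^M+1)$ in~\eqref{poly H} ensures that $H_q$ satisfies both~\eqref{clp} (with exponent $M+\delta$) and~\eqref{LN-ell2}, since the $x$-dependence of $H_q$ comes only through $H$ and is controlled by $2C(|p|^M+1)=o(|p|^{M+\delta})$. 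Corollary~\ref{statio-bien-posee} then produces a unique Lipschitz viscosity solution $v^{\e,q}$ of the regularized stationary problem, and Theorem~\ref{thm-evol-unif} produces a unique continuous solution $u^q$ of the regularized evolution problem, Lipschitz in both space and time.

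The decisive observation is that the H\"older regularity result of~\cite{clp10} relies only on the coercivity of the Hamiltonian (not on ellipticity) and on a bound for $|\e v^{\e,q}|_\infty \leq |H_q(\cdot,0)|_\infty = |H(\cdot,0)|_\infty$. Since $H_q$ satisfies the $q$-uniform coercivity $H_q(x,p)\geq \frac{1}{C}|p|^m-C$ inherited from~\eqref{poly H}, one gets $|v^{\e,q}|_{C^{0,(m-2)/(m-1)}(\T^N)}\leq K$ with $K$ independent of $q$. For the parabolic approximation, Lemma~\ref{clp-time} delivers the analogous $\frac{m-2}{m-1}$-H\"older estimate in space, provided one has a $q$-uniform Lipschitz-in-time bound for $u^q$. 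The latter is obtained as in the proof of Theorem~\ref{thm-evol-unif}: the classical super/subsolutions $u_0(x)\pm \Lambda t$ compared with $u^q$ yield $|u^q(x,t)-u_0(x)|\leq \Lambda t$ with, for instance,
\begin{equation*}
\Lambda := |\mathrm{trace}(A\,D^2u_0)|_\infty + |\Delta u_0|_\infty + |H(\cdot,Du_0)|_\infty + |Du_0|_\infty^{M+\delta},
\end{equation*}
which is independent of $q\geq 1$, and then the semigroup argument used in the proof of Theorem~\ref{thm-evol-unif} upgrades this into Lipschitz continuity in time with the same constant.

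With these uniform estimates, Ascoli--Arzel\`a extracts subsequences $v^{\e,q}\to v^\e$ and $u^q\to u$ converging locally uniformly, and the stability of viscosity solutions (using $H_q\to H$ and $A_q\to A$ locally uniformly) shows that the limits solve~\eqref{approx-scal} and~\eqref{edp-evol} respectively and inherit the $\frac{m-2}{m-1}$-H\"older regularity in space (and Lipschitz regularity in time for $u$). The main technical point is to check carefully that the H\"older constants supplied by~\cite{clp10} and Lemma~\ref{clp-time} depend only on $q$-independent data ($N$, $m$, the coercivity constant $C$ from~\eqref{poly H}, $|H(\cdot,0)|_\infty$, the bounds on $A$, and the $C^2$ norm of $u_0$), and \emph{not} on the regularization parameters $q$ or $\delta$; once this is in place the compactness and stability argument is routine.
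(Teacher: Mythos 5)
Your double-regularization strategy ($A\to A+\tfrac1q I$, $H\to H+\tfrac1q|p|^{M+\delta}$) is essentially the approach the paper takes: regularize so that Theorem~\ref{CP} (or the comparison principle for~\eqref{edp-app}) plus Perron's method produce a continuous solution, then extract $q$-uniform $\frac{m-2}{m-1}$-H\"older bounds from the $q$-independent coercivity $H_q\ge\frac1C|p|^m-C$ via~\cite{clp10} and Lemma~\ref{clp-time}, and conclude by Ascoli--Arzel\`a and stability. The stationary part of your argument is fine, and your identification of the $q$-independent data is exactly the point the paper checks.

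There is, however, one genuine omission in the parabolic step. Lemma~\ref{clp-time} requires the function $U$ to be \emph{bounded} as well as Lipschitz in time; you only produce the Lipschitz-in-time bound $|u^q(x,t)-u_0(x)|\le\Lambda t$, which does not give boundedness since $u^q$ typically grows linearly in $t$. One must apply Lemma~\ref{clp-time} to $U=u^q+c^qt$ where $(c^q,v^q)$ solves the ergodic problem for the regularized equation, and establish that $|u^q+c^qt|_\infty$ is bounded uniformly in $q$ and $t$. This is precisely the step the paper singles out as ``not straightforward'': it requires comparing $u^q$ with $v^q-c^qt\pm C$, and controlling $|v^q|_\infty$ uniformly in $q$ (which in turn uses the $q$-uniform H\"older bound on $v^q$ after normalizing $v^q(0)=0$, since the Lipschitz bound from Corollary~\ref{statio-bien-posee} degenerates as $q\to\infty$). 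Your version, by regularizing $A$ in the parabolic problem too, does make $v^q$ Lipschitz for each fixed $q$, so the comparison itself is routine; but the $q$-uniform $L^\infty$ bound on $v^q$ (hence on $u^q+c^qt$) still needs to be stated explicitly before Lemma~\ref{clp-time} can be invoked with a $q$-uniform constant. Apart from this, your proof matches the paper's.
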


\begin{proof}
The proof follows the approach used in Step 2 of the proof of Theorem~\ref{thm-evol-unif}.
\smallskip

\noindent{\it Step 1. Existence for the stationary problem~\eqref{approx-scal}.}
Equation~\eqref{approx-scal} with $H$ replaced by 
$H_q(x,p)=\frac{|p|^{M+1}}{q}+H(x,p)$ and $A$ replaced by $A+\frac{1}{q} I$
satisfies the conditions of Theorem~\ref{CP}, 
hence we have the strong comparison principle for this new equation. 
Therefore, we can apply Perron's method to obtain the existence of a 
continuous solution $v_q^\e$.
From~\cite{clp10}, $v_q^\e$ is $\frac{m-2}{m-1}$-H\"older 
continuous. Using Ascoli-Arzela Theorem and stability when $q\to +\infty,$ we obtain
the existence of a viscosity solution $v^\e$
which is  $\frac{m-2}{m-1}$-H\"older continuous (with a constant independent
of $\e$).
\smallskip

\noindent{\it Step 2. Existence of H\"older continuous solutions to the
ergodic problem.} 
We can reproduce the beginning of the proof of Theorem~\ref{thm-erg}
with $v^\e$: the sequences $\e v^\e$ and $v^\e-v^\e(0)$
are still equicontinuous and therefore, we can build a solution
$(c,v^0)\in \R\times C^{0, \frac{m-2}{m-1}}(\T^N)$ to~\eqref{pb-ergod}.
\smallskip

\noindent{\it Step 3. Existence for the parabolic problem.}
We now consider~\eqref{edp-app}. 
This equation satisfies a strong comparison
principle. We can follow readily the proof of Step 2 of Theorem~\ref{thm-evol-unif}
up to obtain a H\"older continuous solution $u^q.$
Notice it is possible to
build a solution to~\eqref{pb-ergod} as explained in Step 2 above. 
The comparison of $u^q$ with $v^q-c^q t \pm C$ where $C$ is a big constant
is not anymore straightforward as in the proof of Theorem~\ref{thm-evol-unif}
since $v^q$ is only H\"older continuous and not Lipschitz continuous.
To continue, we need to adapt the proof of Theorem~\ref{CP} to the parabolic
case which can be done easily since $u^q, v^q$ are $\frac{m-2}{m-1}$-H\"older continuous
in space. It is then possible to send a subsequence $q\to +\infty$ to obtain a H\"older
continuous (in space) solution $u$ to~\eqref{edp-evol} as desired.
\end{proof}


\section{Appendix}

\begin{proof}[Proof of Lemma \ref{oscillation}]
For simplicity, we skip the $\e$ superscript in $v^\e$. 
The constant $L$ which appears below is the one of~\eqref{ssa4}. 
Consider
\begin{eqnarray*}
M=\max_{x,y \in \T^N}\{ v(x)-L v(y)+(L-1)\min v -L|x-y|\}.
\end{eqnarray*}
We are done if $M\le 0$.
Otherwise, the above positive maximum  is achieved at 
$(\overline{x},\overline{y})$ with $\overline{x}\not= \overline{y}$.
Notice that the continuity of $v$ is crucial at this step. 
The theory of second order viscosity solutions 
yields, for every $\varrho>0,$ the existence of 
$(p,X) \in \overline{J}^{2,+}v(\overline{x})$ and
$(p/L,Y/L) \in \overline{J}^{2,-}v(\overline{y})$, $p=L \frac{\bar{x}-\bar{y}}{|\bar{x}-\bar{y}|}$,
such that
\begin{eqnarray*}
&& \left\{
\begin{array}{ll}
\displaystyle \e v(\bar{x})
-{\rm trace}(A(\bar{x}) X)+H(\bar{x},p)
\le 0,\\[2mm]
\displaystyle  \e v(\overline{y})-{\rm trace}(A(\overline{y})\frac{Y}{L})+H(\overline{y},\frac{p}{L}) \ge 0.
\end{array}
\right.
\end{eqnarray*}
Using Lemma~\ref{tech lemma}, we have
\begin{eqnarray*}
&&-{\rm trace}(A(\overline{x})X
-A(\overline{y})Y)\geq 
- LN|x-y| |\sigma_x|_\infty^2+O(\varrho)
\end{eqnarray*}

It follows
\begin{eqnarray*}
  \hspace*{0.2cm}  \e (v(\bar{x})-Lv(\overline{y}))
-{\rm trace}(A(\overline{x}) X
-A(\overline{y})Y)
+H(\bar{x},p)
-L H(\overline{y},\frac{p}{L})\}
\leq 0.\nonumber
\end{eqnarray*}

Recall that  $\e\min v\leq |H(\cdot ,0)|_\infty$, then
\begin{eqnarray*}
\e (v(\bar{x})- Lv(\overline{y}))>-(L-1)\e\min v
\ge -L|H(\cdot ,0)|_\infty.
\end{eqnarray*}

Finally, we obtain
\begin{eqnarray*}
H(\bar{x},p)
-L \left[H(\overline{y},\frac{p}{L}) +|H(\cdot ,0)|_\infty+N|x-y| |\sigma_x|_\infty^2\right]<0.
\end{eqnarray*}
Applying \eqref{ssa4} yields a contradiction.
\end{proof}

\subsection{Proof of Lemma~\ref{tech lemma}}
For simplicity, we skip the $\e$ superscript in $v^\e$.
The theory of second order viscosity solutions yields (see \cite[Theorem 3.2]{cil92} for instance), 
for every $\varrho>0,$ the existence of 
$(p,X) \in \overline{J}^{2,+}v(\overline{x}),(p,Y) \in \overline{J}^{2,-}v(\overline{y})$
such that~\eqref{mat},~\eqref{mat-bis},~\eqref{mat-ter} hold. 

Le us prove~\eqref{estim-trace1} and~\eqref{ineq-tracet}.
From~\eqref{mat}, for every $\zeta, \xi\in\R^N,$ we have
\begin{eqnarray*}
\langle X\zeta,\zeta \rangle - \langle Y\xi,\xi\rangle
\leq \Psi'\langle \zeta-\xi,B(\zeta-\xi) \rangle 
+ \Psi'' \langle \zeta-\xi,(q\otimes q)(\zeta -\xi)\rangle
+O(\varrho).
\end{eqnarray*}
We estimate ${\rm trace}(A(\overline{x})X)$
and ${\rm trace}(A(\overline{y})Y)$ using
two orthonormal bases $(e_1,\cdots ,e_N)$ and
$(\tilde{e}_1,\cdots ,\tilde{e}_N)$ in the following way:
\begin{eqnarray}\label{estimT}
T:= {\rm trace}(A(\overline{x})X
-A(\overline{y})Y)
&=&
\sum_{i=1}^N \langle X\sigma(\overline{x}) e_i, \sigma(\overline{x}) e_i \rangle
- \langle Y\sigma(\overline{y}) \tilde{e}_i, 
\sigma(\overline{y}) \tilde{e}_i \rangle
\nonumber\\
&\leq &
\sum_{i=1}^N \Psi'\langle \zeta_i,B\zeta_i\rangle
+ \Psi''\langle \zeta_i,(q\otimes q)\zeta_i\rangle+O(\varrho)
\nonumber\\
&\leq &
\Psi''\langle \zeta_1,(q\otimes q)\zeta_1\rangle
+ \sum_{i=1}^N \Psi'\langle \zeta_i,B\zeta_i\rangle+O(\varrho),
\end{eqnarray}
where we set $\zeta_i= \sigma(\overline{x}) e_i-\sigma(\overline{y}) \tilde{e}_i$
and noticing that 
$\Psi''\langle \zeta_i,(q\otimes q)\zeta_i\rangle =\Psi'' \langle \zeta_i, q\rangle^2
\leq 0$ since $\Psi$ is concave.

We now build a suitable base to prove~\eqref{estim-trace1}
and another one to prove~\eqref{ineq-tracet}.

In the case of~\eqref{estim-trace1} where $\sigma$ could be degenerate, 
we choose any orthonormal basis such that $e_i= \tilde{e}_i.$ It follows
\begin{eqnarray*}
T&\leq & 
\sum_{i=1}^N \Psi'\langle (\sigma(\overline{x})-\sigma(\overline{y}))e_i 
,B(\sigma(\overline{x})-\sigma(\overline{y}))e_i\rangle+O(\varrho)\\
&\leq &
\Psi' N |\sigma(\overline{x})-\sigma(\overline{y})|^2|B|+O(\varrho)\\
&\leq & \Psi' N |\sigma_x|_\infty^2 |\overline{x}-\overline{y}|+O(\varrho)
\end{eqnarray*}
since $|B|\leq 1/|\overline{x}-\overline{y}|.$ Thus~\eqref{estim-trace1}
holds.

When~\eqref{sig-deg} holds, i.e.,
$A(x)\geq \nu I$ for every $x,$ the matrix $\sigma(x)$ is invertible and
we can set
\begin{eqnarray*}
e_1= \frac{\sigma(\overline{x})^{-1}q}{|\sigma(\overline{x})^{-1}q|},
\quad \tilde{e}_1= -\frac{\sigma(\overline{y})^{-1}q}{|\sigma(\overline{y})^{-1}q|},
\quad \text{where $q$ is given by~\eqref{mat-bis}}.
\end{eqnarray*}
If $e_1$ and $\tilde{e}_1$ are collinear, 
then we complete the basis with orthogonal unit vectors 
$e_i=\tilde{e}_i\in e_1^\perp,$ $2\leq i\leq N.$ Otherwise, in the plane
${\rm span}\{e_1, \tilde{e}_1\},$ we consider a rotation $\mathcal{R}$
of angle $\frac{\pi}{2}$ and define
\begin{eqnarray*}
e_2=\mathcal{R}e_1, \quad 
\tilde{e}_2 =-\mathcal{R}\tilde{e}_1.
\end{eqnarray*}
Finally, noticing that ${\rm span}\{e_1, e_2\}^\perp
={\rm span}\{\tilde{e}_1, \tilde{e}_2\}^\perp,$ we can complete the orthonormal
basis with unit vectors $e_i=\tilde{e}_i\in {\rm span}\{e_1, e_2\}^\perp,$ $3\leq i\leq N.$

From~\eqref{sig-deg}, we have
\begin{eqnarray}\label{ineg-sig}
\nu \leq \frac{1}{|\sigma(x)^{-1}q|^2}\leq |\sigma|_\infty^2.
\end{eqnarray}
 It follows
\begin{eqnarray*}
\langle \zeta_1,(q\otimes q)\zeta_1\rangle =
\left( \frac{1}{|\sigma(\overline{x})^{-1}q|}
+\frac{1}{|\sigma(\overline{y})^{-1}q|} \right)^2\geq 4\nu.
\end{eqnarray*}
From~\eqref{mat-bis}, we deduce $Bq=0.$ 
Therefore
\begin{eqnarray*}
\langle \zeta_1,B\zeta_1\rangle =0.
\end{eqnarray*}
For $3\leq i\leq N,$ we have
\begin{eqnarray*}
\langle \zeta_i,B\zeta_i\rangle
= \langle (\sigma(\overline{x})-\sigma(\overline{y}))e_i,
B(\sigma(\overline{x})-\sigma(\overline{y}))e_i\rangle
\leq |\sigma_x|_\infty^2 |\overline{x}-\overline{y}|.
\end{eqnarray*}

Now, we estimate $\zeta_2$
\begin{eqnarray*}
|\zeta_2|
= |(\sigma(\overline{x})-\sigma(\overline{y}))\mathcal{R}e_1 +
\sigma(\overline{y})\mathcal{R}(e_1+\tilde{e}_1)|
\leq |\sigma_x|_\infty|\overline{x}-\overline{y}| + |\sigma|_\infty |e_1+\tilde{e}_1|.
\end{eqnarray*}
It remains to estimate 
\begin{eqnarray*}
|e_1+\tilde{e}_1|
&\leq& \frac{1}{|\sigma(\overline{x})^{-1}q|}
|\sigma(\overline{x})^{-1}q-\sigma(\overline{y})^{-1}q|
+ |\sigma(\overline{y})^{-1}q|
\left|\frac{1}{|\sigma(\overline{x})^{-1}q|}-\frac{1}{|\sigma(\overline{y})^{-1}q|}\right|\\
&\leq &
\frac{2|\sigma|_\infty |\sigma_x|_\infty}{\nu}  |\overline{x}-\overline{y}|,
\end{eqnarray*}
from~\eqref{ineg-sig} and $|(\sigma^{-1})_x|_\infty\leq  |\sigma_x|_\infty/\nu.$

From~\eqref{estimT}, we finally obtain
$T\leq 4\nu\Psi'' +\tilde{C}\Psi'|\overline{x}-\overline{y}|+O(\varrho)$
where
\begin{eqnarray}\label{def-ctilde}
&& \tilde{C}=\tilde{C}(N,\nu,|\sigma|_\infty, |\sigma_x|_\infty)
:= |\sigma_x|_\infty^2(N-2 +(1+\frac{2|\sigma|_\infty^2}{\nu})^2).
\end{eqnarray}
This completes the proof of~\eqref{ineq-tracet}.

We finally prove~\eqref{estimation-outil}.
Writing the viscosity inequality
for the subsolution $v$ of~\eqref{approx-scal} at $\overline{x}$ and 
the supersolution $v$ at $\overline{y},$
we get
\begin{eqnarray*}
&& \left\{
\begin{array}{ll}
\displaystyle \e v(\bar{x})
-{\rm trace}(A(\bar{x}) X)+H(\bar{x},p)
\le 0,\\[2mm]
\displaystyle \e v(\overline{y})
-{\rm trace}(A(\overline{y})Y)+H(\overline{y},p) \ge 0.
\end{array}
\right.
\end{eqnarray*}
Since the maximum is supposed to be positive and $\Psi\geq 0,$ we have
$v(\bar{x})> v(\overline{y})$ and obtain
\begin{eqnarray}
\label{visco-ineq185}
-{\rm trace}(A(\overline{x}) X
-A(\overline{y})Y)+H(\bar{x},p)
-H(\overline{y},p)
< 0.\nonumber
\end{eqnarray}
Estimate~\eqref{estimation-outil} follows from a straightforward application
of~\eqref{ineq-tracet}.



\end{document}